 \numberwithin{equation}{section}
 \theoremstyle{plain}
 \newtheorem{theorem}{Theorem}[section]
 \newtheorem{corollary}[theorem]{Corollary}
 \newtheorem{lemma}[theorem]{Lemma}
 \newtheorem{proposition}[theorem]{Proposition}
 \newtheorem{question}[theorem]{Question}
 \theoremstyle{definition}
 \newtheorem{definition}[theorem]{Definition}
 \newtheorem{definitions}[theorem]{Definitions}
 \newtheorem{example}[theorem]{Example}
 \newtheorem{examples}[theorem]{Examples} 
 \newtheorem{remark}[theorem]{Remark}
 \newtheorem{remarks}[theorem]{Remarks}
 \DeclareMathOperator{\im}{im}
 \DeclareMathOperator{\id}{id}
 \DeclareMathOperator{\ann}{ann}
 \DeclareMathOperator{\Ann}{Ann}
 \DeclareMathOperator{\soc}{soc}
 \DeclareMathOperator{\Spec}{Spec}
 \DeclareMathOperator{\Max}{Max}
 \DeclareMathOperator{\Ext}{Ext}
 \DeclareMathOperator{\Kdim}{K.\mspace{1mu}dim}
 \DeclareMathOperator{\rKdim}{r.\mspace{1mu}K.\mspace{1mu}dim}
 \DeclareMathOperator{\lKdim}{l.\mspace{1mu}K.\mspace{1mu}dim}
 \DeclareMathOperator{\rgldim}{r.\mspace{1mu}gl.\mspace{1mu}dim}
 \DeclareMathOperator{\pd}{pd}
 \DeclareMathOperator{\characteristic}{char}
 \DeclareMathOperator{\rad}{rad}
 \newcommand{\F}{\mathcal{F}}
 \newcommand{\Fpr}{\mathcal{F}_{\textnormal{pr}}}
 \newcommand{\C}{\mathcal{C}}
 \newcommand{\setS}{\mathcal{S}}
 \newcommand{\M}{\mathfrak{M}}
 \newcommand{\m}{\mathfrak{m}}
 \newcommand{\Fprsim}{\mathcal{F}_{\textnormal{pr}}^{\mspace{2mu}\circ}}
 \newcommand{\separate}{\medskip}
\begin{document}

\title[Noncommutative generalizations of theorems of Cohen and Kaplansky]
{Noncommutative generalizations of\\
theorems of Cohen and Kaplansky}
\author{Manuel L. Reyes}
\address{Department of Mathematics\\
University of California\\
Berkeley, CA, 94720-3840}
\curraddr{Department of Mathematics\\
University of California, San Diego\\
9500 Gilman Drive, \#0112\\
La Jolla, CA  92093-0112}
\email{m1reyes@math.ucsd.edu}
\urladdr{http://math.ucsd.edu/~m1reyes/}

\keywords{point annihilators, cocritical right ideals, Cohen's Theorem,
right noetherian rings, Kaplansky's Theorem, principal right ideals, Krull dimension,
semiprime rings, essential right ideals}
\subjclass[2010]{Primary: 16D25, 16P40, 16P60; Secondary: 16N60}
\date{June 2, 2011}

\begin{abstract}
This paper investigates situations where a property of a ring can be tested on a set
of ``prime right ideals.'' 
Generalizing theorems of Cohen and Kaplansky, we show that every right ideal of
a ring is finitely generated (resp.\ principal) iff every ``prime right ideal'' is finitely
generated (resp.\ principal), where the phrase ``prime right ideal'' can be interpreted
in one of many different ways.
We also use our methods to show that other properties can be tested on special
sets of right ideals, such as the right artinian property and various homological properties.
Applying these methods, we prove the following noncommutative generalization of a result
of Kaplansky: a (left and right) noetherian ring is a principal right ideal ring
iff all of its maximal right ideals are principal.
A counterexample shows that the left noetherian hypothesis cannot be dropped.
Finally, we compare our results to earlier generalizations of Cohen's and Kaplansky's
theorems in the literature.
\keywords{point annihilators \and cocritical right ideals \and Cohen's Theorem \and
right noetherian rings \and Kaplansky's Theorem \and principal right ideals}
\end{abstract}

\maketitle

\section{Introduction}

Two famous theorems from commutative algebra due to I.\,S.~Cohen and I.~Kaplansky state
that, to check whether every ideal in a commutative ring is finitely generated
(respectively, principal), it suffices to test only the prime ideals. Cohen's Theorem
appeared as Theorem~2 of~\cite{Cohen}.

\begin{theorem}[Cohen's Theorem]
\label{original Cohen's Theorem}
A commutative ring $R$ is noetherian iff every prime ideal of $R$ is finitely
generated.
\end{theorem}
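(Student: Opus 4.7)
The plan is to prove the nontrivial (reverse) direction by contradiction via a Zorn's Lemma argument on the collection $\mathcal{S}$ of ideals of $R$ that are \emph{not} finitely generated, ordered by inclusion. The forward direction is immediate, so I would focus entirely on showing that if $\mathcal{S} \neq \emptyset$ then $R$ admits a non-finitely-generated prime ideal, contradicting the hypothesis.

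First I would verify that the union of a chain in $\mathcal{S}$ again lies in $\mathcal{S}$: any finite generating set would be contained in a single member of the chain, forcing that member to be finitely generated. So $\mathcal{S}$ has a maximal element $P$, and the goal reduces to proving $P$ is prime. Suppose for contradiction that $a, b \in R \setminus P$ with $ab \in P$. Then $P + (a)$ strictly contains $P$, so by maximality of $P$ in $\mathcal{S}$ it lies outside $\mathcal{S}$ and is finitely generated; write $P + (a) = (p_1 + r_1 a, \ldots, p_n + r_n a)$ with each $p_i \in P$. Similarly the colon ideal $(P : a) = \{x \in R : xa \in P\}$ properly contains $P$ (since $b \in (P : a)$), hence is also finitely generated.

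The key identity, whose verification is the one genuine computation in the argument, is
\[
P = (p_1, \ldots, p_n) + (P : a) \cdot a.
\]
The inclusion $\supseteq$ is clear. For $\subseteq$, any $p \in P$ lies in $P + (a)$, so $p = \sum s_i(p_i + r_i a)$ for some $s_i \in R$; rearranging gives $\bigl(\sum s_i r_i\bigr) a = p - \sum s_i p_i \in P$, so $\sum s_i r_i \in (P : a)$, producing the required expression. Since $(P : a)$ is finitely generated, so is $(P : a) \cdot a$, and hence so is $P$, contradicting $P \in \mathcal{S}$. The main conceptual obstacle I anticipate is recognizing that one must introduce the auxiliary ideal $(P : a)$ alongside $P + (a)$ in order to close the loop; the rest is formal bookkeeping around a standard Zorn argument.
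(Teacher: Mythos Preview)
Your proof is correct and is precisely the classical argument. Note that the paper does not actually supply its own proof of this statement: Theorem~\ref{original Cohen's Theorem} is simply cited from Cohen's original paper, and the surrounding text only sketches the strategy (``pass to a maximal counterexample by Zorn's Lemma and prove that such a maximal counterexample is prime''), which is exactly what you carry out in detail.
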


Also, we recall a characterization of commutative principal ideal rings due to
I.~Kaplansky, which appeared as Theorem~12.3 of~\cite{KapDivisors}. Throughout this
paper, a ring in which all right ideals are principal will be called a \emph{principal right ideal ring},
or PRIR. Similarly, we have principal left ideal rings (PLIRs), and a ring which is
both a PRIR and a PLIR is called a \emph{principal ideal ring}, or PIR. 

\begin{theorem}[Kaplansky's Theorem]
\label{original Kap's Theorem}
A commutative noetherian ring $R$ is a principal ideal ring iff every maximal
ideal of $R$ is principal.
\end{theorem}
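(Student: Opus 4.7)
The forward implication is trivial. For the converse, assume $R$ is commutative noetherian with every maximal ideal principal, and suppose for contradiction that some ideal of $R$ is not principal. By the noetherian hypothesis, the family of non-principal ideals has a maximal element $I$; the plan is to derive a contradiction by showing $I$ must in fact be principal.

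The first step adapts Kaplansky's classical computation to show $I$ is prime. If $ab \in I$ with $a, b \notin I$, then by maximality of $I$ both $I + aR$ and $(I :_R a)$ are strictly larger than $I$, hence principal, say equal to $cR$ and $dR$ respectively. A short calculation --- using that $a \in cR$ implies $(I :_R c) \subseteq (I :_R a)$, and that $c \in I + aR$ forces $cd \in I$ --- yields $I = c \cdot (I :_R c) = cdR$, contradicting the choice of $I$. Hence $I$ is prime.

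To derive the contradiction from the hypothesis on maximal ideals alone, I would localize. Pick a maximal $\m$ containing $I$; the local ring $R_\m$ is noetherian with principal maximal ideal $(\pi)$, and Krull's intersection theorem gives $\bigcap_n (\pi)^n = 0$. It follows that every nonzero element of $R_\m$ has the form $\pi^n u$ for a unit $u$ and a unique $n \geq 0$, hence every nonzero ideal of $R_\m$ equals $(\pi^n)$ for some $n$. In particular $R_\m$ is itself a PIR and $I R_\m$ is principal.

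The main obstacle is the descent from $I R_\m$ being principal at every maximal $\m$ to $I$ itself being principal, since in general locally principal finitely generated ideals need not be globally principal. I would try to close this gap by running the same localization argument on an arbitrary prime $\mathfrak{p}$ of $R$ to conclude that every prime ideal is principal, and then invoking the Zariski--Samuel structure theorem --- which represents any commutative noetherian ring with all primes principal as a finite direct product of principal ideal domains and special principal ideal rings --- to deduce that $R$ is a PIR, whence $I$ is principal. The delicate point within this plan is precisely the globalization of a locally principal prime ideal, which ultimately should rely on the noetherian hypothesis limiting the set of associated primes together with the strong hypothesis that every maximal ideal is principal (which trivializes natural Picard-group obstructions to realizing a locally principal ideal as a globally principal one).
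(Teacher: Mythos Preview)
Your Step~1 is correct and is precisely the Oka-family argument underlying the Kaplansky--Cohen Theorem (Theorem~\ref{original Kap Cohen Theorem}): a maximal non-principal ideal is prime. Your Step~2 is also fine and correctly yields that each $R_{\m}$ is a local PIR of dimension at most~$1$, hence $\dim R\leq 1$.

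The genuine gap is exactly where you flag it. Your proposed patch is circular: ``running the same localization argument on an arbitrary prime $\mathfrak{p}$'' only shows that $\mathfrak{p}R_{\m}$ is principal for each maximal $\m$, not that $\mathfrak{p}$ is principal; and invoking Zariski--Samuel presupposes that all primes are already principal, which is the very thing you are trying to establish. The hypothesis that every \emph{maximal} ideal is principal does kill the Picard obstruction in the reduced $1$-dimensional case (a Dedekind domain with all maximal ideals principal is a PID), but your ring need not be reduced, and the minimal primes are precisely where the localization argument gives you nothing---if $R_{\m}$ is a DVR then $IR_{\m}=0$, which tells you only that $I$ is annihilated by something outside $\m$, not that $I$ is principal.

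The paper itself does not prove Theorem~\ref{original Kap's Theorem} directly; it cites Kaplansky and instead proves the noncommutative generalization Theorem~\ref{noetherian Kap}. Specialized to the commutative case, that proof takes a structurally different route that sidesteps your descent problem entirely: one first shows (using the principal-maximal-ideal hypothesis and Krull dimension~$\leq 1$) that the nilradical $N$ satisfies $N=cN$ for every $c$ regular modulo $N$, then invokes a decomposition theorem to split $R$ as $A\oplus S$ with $A$ artinian and $S$ reduced. Each summand is then handled separately---the artinian piece via Corollary~\ref{left perfect PRIR} and the reduced piece via Proposition~\ref{when semiprime is PRIR}, both of which need only the maximal-ideal hypothesis. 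The key idea you are missing is this reduction to the artinian and reduced cases; once you have it, no globalization of locally principal ideals is required.
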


Combining this result with Cohen's Theorem, Kaplansky deduced the following in
Footnote~8 on p.~486 of~\cite{KapDivisors}.

\begin{theorem}[Kaplansky-Cohen Theorem]
\label{original Kap Cohen Theorem}
A commutative ring $R$ is a principal ideal ring iff every prime ideal of $R$ is
principal.
\end{theorem}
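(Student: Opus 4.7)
The plan is to chain together the two theorems immediately preceding the statement. The forward direction is vacuous: if every ideal of $R$ is principal, then in particular every prime ideal is principal, so I will focus on the converse.

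For the converse, assume that every prime ideal of $R$ is principal. The first observation is that principal ideals are, trivially, finitely generated. Thus every prime ideal of $R$ is finitely generated, and Cohen's Theorem (Theorem~\ref{original Cohen's Theorem}) immediately supplies that $R$ is noetherian. This is the whole role of Cohen's Theorem in the argument: it upgrades the principal-generation hypothesis on primes into the noetherian property that Kaplansky's Theorem needs as an input.

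The second step uses the standard fact that in a commutative ring every maximal ideal is prime. The hypothesis therefore says that every maximal ideal of $R$ is principal. Since $R$ has already been shown to be noetherian, Kaplansky's Theorem (Theorem~\ref{original Kap's Theorem}) applies and concludes that $R$ is a principal ideal ring.

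There is no substantive obstacle — the proof is a two-line deduction, and it is exactly the derivation Kaplansky indicated in the footnote cited above. The only point requiring attention is the logical ordering: Kaplansky's Theorem cannot be invoked until the noetherian hypothesis has been secured, so Cohen's Theorem must be applied first, and only then is the reduction from ``prime'' to ``maximal'' principal ideals carried out.
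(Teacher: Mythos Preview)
Your proof is correct and follows exactly the approach the paper indicates: the paper does not give a formal proof of this classical result but explicitly notes that it ``follows from a combination of the above results due to Cohen and Kaplansky,'' which is precisely the two-step deduction you carry out.
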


(We refer to this result as the Kaplansky-Cohen Theorem for two reasons. The primary
and most obvious reason is that it follows from a combination of the above results
due to Cohen and Kaplansky. But we also use this term because it is a result in the
spirit of Cohen's Theorem, that was first deduced by Kaplansky.)

The unifying theme of this paper is the generalization of the above theorems to
noncommutative rings, using certain families of right ideals as our tools.
Let us mention a typical method of proof of these theorems, as this
will guide our investigation into their noncommutative generalizations. One first
assumes that the prime ideals of a commutative ring $R$ are all finitely generated
(or principal), but that there exists an ideal of $R$ that is not f.g.\ (or principal).
One then passes to a ``maximal counterexample'' by Zorn's Lemma and proves
that such a maximal counterexample is prime. This contradicts the assumption that
all primes have the relevant property, proving the theorem. 

These ``maximal implies prime'' theorems were systematically studied in~\cite{LR}
from the viewpoint of certain families of ideals, called \emph{Oka families}. In
particular, the Cohen and Kaplansky-Cohen theorems were recovered in~\cite[p.~3017]{LR}.
These families were generalized to noncommutative rings in~\cite{Reyes}, where we
defined \emph{Oka families of right ideals}. This resulted in a noncommutative
generalization of Cohen's Theorem in~\cite[Thm.~6.2]{Reyes}, stating that a ring
$R$ is right noetherian iff its \emph{completely prime right ideals} are all
finitely generated. In the present paper, we will improve upon this result, providing
smaller ``test sets'' of right ideals that need to be checked to determine if a ring is right noetherian.
In addition, we will provide generalizations of Kaplansky's Theorem~\ref{original Kap's Theorem}
and the Kaplansky-Cohen Theorem~\ref{original Kap Cohen Theorem}. 

We begin by reviewing the relevant results from~\cite{LR} in~\S\ref{review section}.
This includes an introduction to the notions of right Oka families, classes of cyclic
modules closed under extensions, completely prime right ideals, the Completely Prime
Ideal Principle (CPIP), and the CPIP Supplement.
Our work in \S\S\ref{point annihilator section}--\ref{point annihilator theorem section}
addresses the following question: what are some sufficient conditions for all right ideals
of a ring to lie in a given right Oka family? In~\S\ref{point annihilator section}
we develop the idea of a \emph{(noetherian) point annihilator set} in order to deal with this problem.
Then in~\S\ref{point annihilator theorem section} we prove the \emph{Point Annihilator
Set Theorem~\ref{first point annihilator set theorem}}.
Along with its consequences, such as Theorem~\ref{"PIP supplement"}, this theorem
gives sufficient conditions for a property of right ideals to be testable on a smaller set
of right ideals.
We achieve a generalization of Cohen's Theorem in Theorem~\ref{Cohen's Theorem}. This result
is ``flexible'' in the sense that, in order to check whether a ring is right noetherian, one
can use various test sets of right ideals (in fact, certain point annihilator sets will work).
One important such set is the \emph{cocritical right ideals}.
Other consequences of the Point Annihilator Set Theorem are also investigated.

Next we consider families of principal right ideals in~\S\ref{principal right ideal section}.
Whereas the family of principal ideals of a commutative ring is always an
Oka family, it turns out that the family $\Fpr$ of principal right ideals can
fail to be a right Oka family in certain noncommutative rings.
By defining a right Oka family $\Fprsim$ that ``approximates'' $\Fpr$, we are able to
provide a noncommutative generalization of the Kaplansky-Cohen Theorem in Theorem~\ref{Kap}.
As before, a specific version of this theorem is the following: \emph{a ring is a principal
right ideal ring iff all of its cocritical right ideals are principal.}

In~\S\ref{essential right ideal section} we sharpen our versions of the Cohen
and Kaplansky-Cohen Theorems by considering families of right ideals that are
closed under direct summands. This allows us to reduce the ``test sets'' of the
Point Annihilator Set Theorem~\ref{"PIP supplement"} to sets of essential right
ideals. For instance, to check if a ring is right noetherian or a principal right
ideal ring, it suffices to test the \emph{essential} cocritical right ideals.  
Other applications involving homological properties of right ideals are considered.

We work toward a noncommutative generalization of Kaplansky's
Theorem~\ref{original Kap's Theorem} in~\S\ref{Kap's theorem section}.
The main result here is Theorem~\ref{noetherian Kap}, which states that
\emph{a (left and right) noetherian ring is a principal right ideal ring iff its maximal
right ideals are principal}.
Notably, our analysis also implies that such a ring has right Krull dimension~$\leq 1$.
An example shows that the theorem does not hold if the left noetherian hypothesis
is omitted.

Finally, we explore the connections between our results and previous generalizations
of the Cohen and Kaplansky-Cohen theorems in \S\ref{previous generalizations section}.
These results include theorems due to V.\,R.~Chandran, K.~Koh, G.\,O.~Michler, 
P.\,F.~Smith, and B.\,V.~Zabavs'ki\u{\i}. Discussing these earlier results affords us an
opportunity to survey some previous notions of ``prime right ideals'' studied in the literature.

\subsection*{Conventions}

All rings are associative and have identity, and all modules and ring homomorphisms
are unital. Let $R$ be a ring. We say $R$ is a \emph{semisimple ring} if $R_R$
is a semisimple module. We denote the Jacobson radical of $R$ by $\rad(R)$.
We say that $R$ is \emph{semilocal} (resp.\ \emph{local}) if $R/\rad(R)$ is
semisimple (resp.\ a division ring).
Given a family $\F$ of right ideals in a ring $R$, we let $\F'$ denote the
complement of $\F$ within the set of all right ideals of $R$. 
Now fix an $R$-module $M_R$. We will write $N\subseteq_{e}M$ to mean that $N$ is
an essential submodule of $M$. A \emph{proper factor} of $M$ is a module of the
form $M/N$ for some nonzero submodule $N_R\subseteq M$.

\section{Review of right Oka families}
\label{review section}

In~\cite{Reyes}, we introduced the following notion of a ``one-sided prime.''

\begin{definition}
A right ideal $P_R\subsetneq R$ is a \emph{completely prime right ideal}
if, for all $a,b\in R$,
\[
aP \subseteq P \text{ and } ab \in P \implies a \in P \text{ or } b \in P.
\]
\end{definition}

Notice immediately that a two-sided ideal is completely prime as a right ideal iff it is a
completely prime ideal (that is, the factor ring is a domain).
In particular, the completely prime right ideals of a commutative ring are precisely the
prime ideals of that ring.

One way in which these right ideals behave like prime ideals of commutative rings
is that \emph{right ideals that are maximal in certain senses tend to be completely
prime}. A more precise statement requires a definition.
Given a right ideal $I$ and element $a$ of a ring $R$, we denote
\[
a^{-1}I = \{ r \in R : ar \in I \}.
\]

\begin{definition}
A family $\F$ of right ideals in a ring $R$ is an \emph{Oka family of right ideals} (or
a \emph{right Oka family}) if $R\in F$ and, for any element $a\in R$ and any right ideal
$I_R\subseteq R$,
\[
I + aR,\ a^{-1}I \in \F \implies I \in \F.
\]
\end{definition}

For a family $\F$ of right ideals in a ring $R$, we let $\F'$ denote the \emph{complement}
of $\F$ (the set of right ideals of $R$ that do not lie in $\F$), and we let $\Max(\F')$ denote
the set of right ideals of $R$ that are maximal in $\F'$.
The precise ``maximal implies prime'' result, which was proved in~\cite[Thm.~3.6]{Reyes}, can
now be stated.

\begin{theorem}[Completely Prime Ideal Principle]
\label{CPIP}
Let $\F$ be an Oka family of right ideals in a ring $R$. Then any right ideal
$P\in\Max(\F')$ is completely prime.
\end{theorem}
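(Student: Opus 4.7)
The plan is to argue by contradiction: assume $P \in \Max(\F')$ is not completely prime and derive witnesses to which we can apply the Oka property, forcing $P \in \F$, contradicting $P \in \F'$.

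First I would unpack the failure of complete primeness. If $P$ is not completely prime, there exist $a, b \in R$ with $aP \subseteq P$ and $ab \in P$, yet $a \notin P$ and $b \notin P$. These two elements are exactly what I need to control the right ideals $P + aR$ and $a^{-1}P$. Since $a \notin P$, the right ideal $P + aR$ properly contains $P$, and by maximality of $P$ in $\F'$ we get $P + aR \in \F$. For $a^{-1}P$, the condition $aP \subseteq P$ shows $P \subseteq a^{-1}P$, while $ab \in P$ with $b \notin P$ witnesses that this containment is strict. Thus $a^{-1}P$ is a right ideal of $R$ properly containing $P$, so either $a^{-1}P = R \in \F$ or $a^{-1}P \in \F$ by maximality of $P$ in $\F'$; in either case $a^{-1}P \in \F$.

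Now I would apply the right Oka property of $\F$ to the element $a$ and the right ideal $I = P$. The hypothesis $I + aR \in \F$ and $a^{-1}I \in \F$ is exactly what we have just verified, so the Oka property yields $P \in \F$. This contradicts the assumption $P \in \Max(\F') \subseteq \F'$, completing the proof.

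There is no genuine obstacle here beyond recognizing that the failure of complete primeness is precisely engineered to produce the hypotheses of the Oka closure condition: the clause $a \notin P$ guarantees that $P + aR$ strictly enlarges $P$, the clause $aP \subseteq P$ guarantees $P \subseteq a^{-1}P$, and the clause $ab \in P$ with $b \notin P$ guarantees the latter inclusion is strict. The only mild points to verify carefully are that $a^{-1}P$ is a right ideal (routine, using right-distributivity of multiplication over $P$) and that the case $a^{-1}P = R$ is handled by $R \in \F$, which is part of the definition of a right Oka family.
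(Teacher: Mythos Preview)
Your argument is correct and is the standard proof of the Completely Prime Ideal Principle. Note that the paper itself does not give a proof of this statement; it is quoted in the review section~\S\ref{review section} with a citation to~\cite[Thm.~3.6]{Reyes}, where the proof is exactly the contrapositive argument you outline.
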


A result accompanying the Completely Prime Ideal Principle (CPIP) shows that, for
special choices of right Oka families $\F$, in order to test whether $\F$ consists of all right
ideals it is enough to check that all completely prime right ideals lie in $\F$. Throughout
this paper, a family $\F_0$ of right ideals in a ring $R$ is called a \emph{semifilter} if,
whenever $J\in\F$ and $I$ is a right ideal of $R$, $J\subseteq I$ implies $I\in\F$.

\begin{theorem}[Completely Prime Ideal Principle Supplement]
\label{CPIP supplement}
Let $\F$ be a right Oka family in a ring $R$ such that every nonempty chain of right
ideals in $\F'$ (with respect to inclusion) has an upper bound in $\F'$. Let $\setS$
denote the set of completely prime right ideals of $R$.
\begin{itemize}
\item Let $\F_0$ be a semifilter of right ideals in $R$. If $\setS\cap\F_0\subseteq\F$,
then $\F_0\subseteq\F$.
\item For $J_R\subseteq R$, if all right ideals in $\setS$ containing $J$ (resp.\ properly
containing $J$) belong to $\F$, then all right ideals containing $J$ (resp.\ properly
containing $J$) belong to $\F$.
\item If $\setS\subseteq\F$, then $\F$ consists of all right ideals of $R$.
\end{itemize}
\end{theorem}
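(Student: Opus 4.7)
The plan is to reduce all three bullets to the first via judicious choices of the semifilter $\F_0$, and to prove the first by a Zorn's Lemma argument that upgrades a counterexample in $\F_0\cap\F'$ to a maximal element of $\F'$, which CPIP then forces to be completely prime and thus in $\F$.

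For the first bullet, I would argue by contradiction: suppose $\F_0 \not\subseteq \F$, so the set $\mathcal{G} := \F_0 \cap \F'$ is nonempty. I would like to apply Zorn's Lemma to $\mathcal{G}$. Given a nonempty chain $\mathcal{C}\subseteq\mathcal{G}$, the chain hypothesis on $\F'$ supplies an upper bound $U\in\F'$. Since $\mathcal{C}$ is nonempty, pick some $J_0\in\mathcal{C}\subseteq\F_0$; then $U\supseteq J_0$, and because $\F_0$ is a semifilter, $U\in\F_0$. Thus $U\in\mathcal{G}$, so Zorn applies and yields a maximal element $P\in\mathcal{G}$. The key observation is that $P$ is in fact maximal in all of $\F'$: any right ideal $Q\supsetneq P$ with $Q\in\F'$ would satisfy $Q\in\F_0$ (again by the semifilter property), hence $Q\in\mathcal{G}$, contradicting maximality of $P$ in $\mathcal{G}$. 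By the CPIP (Theorem~\ref{CPIP}), $P\in\setS$, so $P\in\setS\cap\F_0\subseteq\F$, contradicting $P\in\F'$.

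For the second bullet, I would instantiate $\F_0$ as either the family $\{I : I\supseteq J\}$ or $\{I : I\supsetneq J\}$; each is routinely verified to be a semifilter (in the strict case, if $I\supsetneq J$ and $I'\supseteq I$ then $I'\supsetneq J$). The hypothesis of the bullet is exactly $\setS\cap\F_0\subseteq\F$, so the first bullet yields $\F_0\subseteq\F$, which is the desired conclusion. For the third bullet, take $\F_0$ to be the family of all right ideals of $R$, which is trivially a semifilter; then $\setS\cap\F_0=\setS\subseteq\F$ gives $\F_0\subseteq\F$, i.e., $\F$ contains every right ideal.

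The only delicate step is verifying the Zorn hypothesis for $\mathcal{G}=\F_0\cap\F'$: the chain condition is imposed on $\F'$, not on $\mathcal{G}$, so one must explicitly use the semifilter property to pull the upper bound back into $\F_0$. Everything else is a routine unpacking of the definitions combined with a direct appeal to the CPIP.
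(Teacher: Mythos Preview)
Your proof is correct. Note that the paper itself does not prove this statement directly---it is recalled from~\cite{Reyes} as part of the review in \S\ref{review section}---but your argument matches the approach the paper uses when proving the more general Point Annihilator Set Theorem~\ref{first point annihilator set theorem}: assume a counterexample in $\F_0\setminus\F$, use the chain hypothesis on $\F'$ and Zorn's Lemma to pass to an element of $\Max(\F')$ lying in $\F_0$ (via the semifilter property), and then invoke the CPIP for the contradiction; the reductions of the second and third bullets to the first are likewise handled identically.
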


In order to efficiently construct right Oka families, we established the following
correspondence in~\cite[Thm.~4.7]{Reyes}. A class $\C$ of cyclic right $R$-modules is
said to be \emph{closed under extensions} if $0\in\C$ and, for every short exact
sequence $0\to L\to M\to N\to 0$ of \underline{cyclic} right $R$-modules, $L\in\C$
and $N\in\C$ imply $M\in\C$. Given a class of cyclic right $R$-modules, one may
construct the following family of right ideals of $R$:
\[
\F_{\C} := \{I_R\subseteq R : R/I\in\C \}.
\]
Conversely, given a family $\F$ of right ideals in $R$, we construct a class
of cyclic $R$-modules
\[
\C_{\F} := \{ M_R : M\cong R/I \text{ for some } I \in \F \}.
\]

\begin{theorem}
\label{cyclic module correspondence}
Given a class $\C$ of cyclic right $R$-modules that is closed under extensions, the
family $\F_{\C}$ is a right Oka family. Conversely, given a right Oka family $\F$, the
class $\C_{\F}$ is closed under extensions.
\end{theorem}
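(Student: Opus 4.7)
The plan is to establish the two directions separately, the converse requiring a preliminary closure-under-isomorphism lemma for right Oka families.

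For the forward direction, assume $\C$ is closed under extensions. Clearly $R \in \F_{\C}$ since $R/R = 0 \in \C$. To verify the Oka axiom, suppose $I + aR,\, a^{-1}I \in \F_{\C}$. Consider the short exact sequence of cyclic right $R$-modules
\[
0 \to (I + aR)/I \to R/I \to R/(I + aR) \to 0,
\]
in which the submodule satisfies $(I + aR)/I \cong R/a^{-1}I$ via $r + a^{-1}I \mapsto ar + I$. The outer terms lie in $\C$ by hypothesis, so closure under extensions of cyclic modules gives $R/I \in \C$, i.e., $I \in \F_{\C}$.

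The converse hinges on the following lemma: \emph{if $\F$ is a right Oka family and $R/I \cong R/I'$ with $I' \in \F$, then $I \in \F$}. Such an isomorphism is determined by $1 + I \mapsto b + I'$ for some $b \in R$, whence $bR + I' = R$ and $I = b^{-1}I'$. Using $bR + I' = R$, choose $b' \in R$ satisfying $bb' \equiv 1 \pmod{I'}$. Tracing the inverse isomorphism $s + I' \mapsto b's + I$ (well-defined precisely by this congruence) shows that $b'R + I = R$ from surjectivity and $(b')^{-1}I = I'$ from injectivity. Applying the Oka axiom to the pair $(I, b')$ then yields $I \in \F$, because $I + b'R = R \in \F$ and $(b')^{-1}I = I' \in \F$.

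With the lemma in hand, we verify that $\C_{\F}$ is closed under extensions. Since $R \in \F$, we have $0 = R/R \in \C_{\F}$. Given a short exact sequence $0 \to L \to M \to N \to 0$ of cyclic right modules with $L, N \in \C_{\F}$, fix a generator of $M$ to write $M = R/I$; then $L = J/I$ for some right ideal $J$ with $I \subseteq J \subseteq R$, and $N \cong R/J$. Since $R/J \cong N \in \C_{\F}$, the lemma produces $J \in \F$. Since $L$ is cyclic, choose $a \in J$ with $L = \langle a + I \rangle$, giving $J = aR + I$ and $L \cong R/a^{-1}I$; the lemma applied to $L \in \C_{\F}$ yields $a^{-1}I \in \F$. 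The Oka axiom applied to $(I, a)$ now gives $I \in \F$, so $M = R/I \in \C_{\F}$.

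The main obstacle is the closure-under-isomorphism lemma: it is not transparent from the axiom alone, and relies on the trick of applying the axiom with the particular element $b'$, a right inverse of $b$ modulo $I'$. This specific choice makes the residual ideal $(b')^{-1}I$ equal $I'$ exactly, so that the hypothesis $I' \in \F$ can be fed into the axiom on the nose rather than merely up to isomorphism.
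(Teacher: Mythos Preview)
Your proof is correct and follows essentially the same approach as the paper: the forward direction uses the isomorphism $(I+aR)/I \cong R/a^{-1}I$ (the paper's Lemma~\ref{isomorphic cyclic modules}), while the converse rests on the similarity-closure lemma you prove, which is precisely Proposition~\ref{Oka is similarity closed} and which the paper explicitly flags as the ``important issue to be dealt with in the proof.''
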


This theorem was used to construct a number of examples of right Oka families
in~\cite{Reyes}. For us, the most important such example is the finitely generated
right ideals: \emph{in any ring $R$, the family of finitely generated right ideals
of $R$ is a right Oka family} (see~\cite[Prop.~3.7]{Reyes}).

An easy consequence of the above theorem, proved in~\cite[Cor.~4.9]{Reyes}, will be
useful throughout this paper.

\begin{corollary}
\label{repeated extensions}
Let $\F$ be a right Oka family in a ring $R$. Suppose that $I_R\subseteq R$ is such
that the right $R$-module $R/I$ has a filtration
\[
0 = M_0 \subseteq M_1 \subseteq \cdots \subseteq M_n = R/I
\]
where each filtration factor is cyclic and of the form $M_j / M_{j-1} \cong R/I_j$ for
some $I_j \in \F$. Then $I\in\F$.
\end{corollary}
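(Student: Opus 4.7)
The plan is induction on the length $n$ of the filtration. The base case $n=0$ is immediate: $R/I = 0$ forces $I = R$, which lies in $\F$ because every right Oka family contains $R$ by definition.

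For $n \geq 1$, I would peel off the \emph{bottom} step $M_1$ rather than the top, because this yields a quotient of $R/I$ to which the inductive hypothesis applies. Writing $M_1 = J/I$ for the unique right ideal $J$ with $I \subseteq J \subseteq R$, the induced filtration
\[
0 = M_1/M_1 \subseteq M_2/M_1 \subseteq \cdots \subseteq M_n/M_1 = R/J
\]
has length $n-1$, with $k$-th factor $(M_{k+1}/M_1)/(M_k/M_1) \cong M_{k+1}/M_k \cong R/I_{k+1}$, where $I_{k+1} \in \F$. The inductive hypothesis applied to $R/J$ then yields $J \in \F$.

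To reach $I \in \F$, I would invoke the Oka property just once. Since $M_1 \cong R/I_1$ is cyclic, I would choose $x \in R$ so that $\bar x := x + I$ corresponds to $\bar 1 \in R/I_1$ under the isomorphism $R/I_1 \xrightarrow{\sim} M_1$. Then $\bar x$ generates $M_1$, so $xR + I = J$; moreover, $x^{-1}I$ equals the annihilator of $\bar x$ in $R/I$, which is $I_1$. Thus $I + xR = J \in \F$ and $x^{-1}I = I_1 \in \F$, so the defining Oka condition applied to $I$ and $x$ delivers $I \in \F$, closing the induction.

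The only delicate point is the direction of the peel: peeling off the top would produce a submodule $K/I \subseteq R/I$ which is not \emph{a priori} of the form $R/L$, blocking the induction. Peeling off the bottom instead produces a cyclic quotient $R/J$ of the required form, and the cyclicity of $M_1$ then provides the single element $x$ needed to invoke the Oka condition with the correct identifications $I + xR = J$ and $x^{-1}I = I_1$.
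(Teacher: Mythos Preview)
Your proof is correct and follows essentially the same approach that the paper intends: the paper presents this as an easy consequence of Theorem~\ref{cyclic module correspondence} (the class $\C_{\F}$ is closed under extensions of cyclic modules), which amounts to exactly the inductive peel-off-the-bottom argument you give, unwound through one application of the Oka condition at each step. Your direct use of the Oka defining property---choosing $x$ so that $I+xR=J$ and $x^{-1}I=I_1$ exactly---is a clean way to avoid separately invoking closure under similarity (Proposition~\ref{Oka is similarity closed}).
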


An important issue to be dealt with in the proof of Theorem~\ref{cyclic module correspondence}
is that of similarity of right ideals. Two right ideals $I$ and $J$ of a ring $R$ are
\emph{similar}, written $I\sim J$, if $R/I\cong R/J$ as right $R$-modules. Two results
from~\cite[\S4]{Reyes} about isomorphic cyclic modules will be relevant to the
present paper, the second of which deals directly with similarity of right ideals.

\begin{lemma}
\label{isomorphic cyclic modules}
For any ring $R$ with right ideal $I\subseteq R$ and element $a\in R$, the following
cyclic right $R$-modules are isomorphic: $R/a^{-1}I\cong (I+aR)/I$.
\end{lemma}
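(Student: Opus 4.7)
The plan is to invoke the first isomorphism theorem for right $R$-modules via the natural ``left multiplication by $a$'' map. Specifically, I would define a map $\varphi : R \to (I+aR)/I$ by the rule $\varphi(r) = ar + I$. Since left multiplication by $a$ commutes with right multiplication by elements of $R$, the map $\varphi$ is a homomorphism of right $R$-modules; it is well-defined because $ar \in aR \subseteq I + aR$ for every $r \in R$.

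Next I would verify surjectivity. An arbitrary element of $(I + aR)/I$ has a representative of the form $i + ar$ with $i \in I$ and $r \in R$, and modulo $I$ this is just $ar + I = \varphi(r)$. Hence the image of $\varphi$ is all of $(I + aR)/I$.

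To identify the kernel, observe that $\varphi(r) = 0$ in $(I+aR)/I$ precisely when $ar \in I$, which by the very definition of $a^{-1}I$ is equivalent to $r \in a^{-1}I$. Therefore $\ker \varphi = a^{-1}I$, and the first isomorphism theorem yields the desired isomorphism $R/a^{-1}I \cong (I+aR)/I$ of right $R$-modules.

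There is essentially no obstacle here; the only minor bookkeeping point is to note that $a^{-1}I$ is a right ideal of $R$ (so that the quotient $R/a^{-1}I$ makes sense as a right module), which is immediate from the description of $a^{-1}I$ as the preimage of the right ideal $I$ under the right $R$-linear map $R \to R$ given by $r \mapsto ar$.
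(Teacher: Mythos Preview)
Your proof is correct and is exactly the standard argument: the first isomorphism theorem applied to the right $R$-linear map $r\mapsto ar+I$. The paper itself does not prove this lemma but merely quotes it from~\cite[\S4]{Reyes}; the argument there is the same one you give.
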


\begin{proposition}
\label{Oka is similarity closed}
Every right Oka family $\F$ in a ring $R$ is closed under similarity: if $I\in\F$ then every
right ideal similar to $I$ is also in $\F$.
\end{proposition}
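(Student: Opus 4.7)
The plan is to translate the hypothesis of similarity $R/I \cong R/J$ into the two ideal-theoretic hypotheses needed to invoke the Oka property on $J$.

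First, I would fix an isomorphism $\varphi : R/I \xrightarrow{\sim} R/J$ of right $R$-modules and let $a \in R$ be any lift of $\varphi(1+I)$, so that $\varphi(r+I) = ar + J$ for every $r \in R$. Two simple observations translate the bijectivity of $\varphi$ into statements about $J$ and $a$: surjectivity of $\varphi$ says that $a+J$ generates $R/J$, i.e.\ $J + aR = R$; and well-definedness together with injectivity of $\varphi$ say that the set of $r \in R$ with $ar \in J$ is exactly $I$, i.e.\ $a^{-1}J = I$. (As a sanity check, this is consistent with Lemma~\ref{isomorphic cyclic modules}, which identifies $(J + aR)/J$ with $R/a^{-1}J$; here that becomes the tautology $R/J \cong R/I$.)

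Second, I would apply the Oka axiom to the right ideal $J$ with the element $a$. We have $J + aR = R \in \F$ (since $R$ lies in every right Oka family) and $a^{-1}J = I \in \F$ by hypothesis. The defining implication of a right Oka family then yields $J \in \F$, which is exactly what we wanted to show.

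There is essentially no obstacle beyond getting the direction of the isomorphism right: the statement is really just the observation that the Oka axiom, applied in the case $I + aR = R$, already forces closure under similarity. One could equally well set things up with $\varphi : R/J \to R/I$ and verify the symmetric formulas, but the version above is the most direct path from the definition of similarity to an application of the Oka condition.
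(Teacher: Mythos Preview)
Your argument is correct. The paper itself does not prove this proposition but cites it from~\cite[\S4]{Reyes}; however, the paper does invoke (just after Examples~\ref{Fpr Oka examples}) the characterization from~\cite[Prop.~4.6]{Reyes} that a family $\F$ is closed under similarity iff $I+aR=R$ and $a^{-1}I\in\F$ imply $I\in\F$, and your proof is exactly the verification that the Oka axiom, specialized to the case $J+aR=R\in\F$, yields this implication---so your approach matches the one the paper implicitly relies on.
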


Finally, in~\cite[\S6]{Reyes} we studied a special collection of completely prime right ideals.

\begin{definition}
A module $M\neq 0$ is monoform if, for every nonzero submodule $N_R\subseteq M$, every
nonzero homomorphism $N\to M$ is injective.
A right ideal $P_R\subsetneq R$ is \emph{comonoform} if the right $R$-module $R/P$ is
monoform.
\end{definition}

It was shown in~\cite[Prop.~6.3]{Reyes} that \emph{every comonoform right ideal is
completely prime}. The idea is that comonoform right ideals form an especially ``well-behaved''
subset of the completely prime right ideals of any ring.

\section{Point annihilator sets for classes of modules}
\label{point annihilator section}

In this section we develop an appropriate notion of a ``test set'' for certain
properties of right ideals in noncommutative rings. This is required for us
to state the main theorems along these lines in the next section.
Recall that a \emph{point annihilator} of a module $M_R$
is defined to be an annihilator of a nonzero element $m\in M\setminus\{0\}$.

\begin{definitions}
Let $\C$ be a class of right modules over a ring $R$. A set $\setS$ of
right ideals of $R$ is a \emph{point annihilator set for $\C$} if every
nonzero $M\in\C$ has a point annihilator that lies in $\setS$. In
addition, we make the following two definitions for special choices of
$\C$:
\begin{itemize}
\item A point annihilator set for the class of all right $R$-modules
will simply be called a \emph{(right) point annihilator set for $R$}.
\item A point annihilator set for the class of all noetherian right
$R$-modules will be called a \emph{(right) noetherian point annihilator
set for $R$}.
\end{itemize}
\end{definitions}

Notice that a point annihilator set need not contain the unit
ideal $R$, because point annihilators are always proper right ideals.
Another immediate observation is that, for a right noetherian ring
$R$, a right point annihilator set for $R$ is the same as a right
noetherian point annihilator set for $R$.

\begin{remark}
\label{larger point annihilator sets}
The idea of a point annihilator set $\setS$ for a class of modules $\C$ is
simply that $\setS$ is ``large enough'' to contain a point annihilator of every
nonzero module in $\C$. In particular, our definition \emph{does not} require
every right ideal in $\setS$ to actually be a point annihilator for some module
in $\C$. This means that any other set $\setS '$ of right ideals with
$\setS '\supseteq \setS$ is also a point annihilator set for $\C$. On the other
hand, if $\C_0 \subseteq \C$ is a subclass of modules, then $\setS$ is again a
point annihilator set for $\C_0$.
\end{remark}

\begin{remark}
\label{point annihilator set restated}
Notice that $\setS$ is a point annihilator set for a class $\C$ of modules iff, for
every nonzero module $M_R\in\C$, there exists a \emph{proper} right ideal
$I\in\setS$ such that the right module $R/I$ embeds into $M$.
\end{remark}

The next result shows that noetherian point annihilator sets for a ring $R$ 
exert a considerable amount of control over the noetherian right $R$-modules.

\begin{lemma}
A set $\setS$ of right ideals in $R$ is a noetherian point annihilator
set iff for every noetherian module $M_R\neq 0$, there is a finite
filtration of $M$
\[
0 = M_0 \subsetneq M_1 \subsetneq \cdots \subsetneq M_n = M
\]
such that, for $1\leq j\leq n$, there exists $I_j\in\setS$ such that
$M_j/M_{j-1}\cong R/I_j$. 
\end{lemma}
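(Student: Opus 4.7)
The plan is to prove the two directions of the biconditional separately. The ($\Leftarrow$) direction is immediate from the filtration, while ($\Rightarrow$) is a straightforward noetherian-induction construction that builds the filtration from the bottom up.

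For the ($\Leftarrow$) direction, I will take a nonzero noetherian $M_R$ and examine the bottom step $M_1$ of the given filtration. Since $M \neq 0$, the filtration has length $n \geq 1$, and $M_1 = M_1/M_0 \cong R/I_1$ is a nonzero cyclic submodule of $M$ with $I_1 \in \setS$. The element of $M_1 \subseteq M$ corresponding to $1 + I_1$ under this isomorphism is nonzero and has annihilator exactly $I_1$, exhibiting a point annihilator of $M$ lying in $\setS$.

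For the ($\Rightarrow$) direction, I fix a nonzero noetherian $M_R$ and let $\T$ denote the collection of submodules $N \subseteq M$ admitting a filtration of the prescribed form (with $0 \in \T$ via the trivial filtration of length $0$). Because $M$ is noetherian, $\T$ has a maximal element $N$; the claim is that $N = M$. If not, $M/N$ is a nonzero noetherian $R$-module, so by hypothesis it has a point annihilator $I \in \setS$, realized as $I = \ann(\bar{m})$ for some nonzero $\bar{m} \in M/N$. The cyclic submodule $\bar{m}R \subseteq M/N$ is isomorphic to $R/I$, and its preimage under the quotient map is a submodule $N' \subseteq M$ with $N \subsetneq N'$ and $N'/N \cong R/I$. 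Concatenating the filtration of $N$ with the single step $N \subsetneq N'$ exhibits $N' \in \T$, contradicting the maximality of $N$. Hence $N = M$, yielding the required filtration.

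The only delicate point is ensuring that the inductive process terminates precisely at $M$ rather than at some proper submodule; this is handled in the argument above by turning termination into the statement that any maximal element of $\T$ must have zero quotient, which in turn is forced by the point annihilator hypothesis. No other substantive obstacle arises.
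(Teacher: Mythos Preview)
Your proof is correct and follows essentially the same argument as the paper: noetherian induction on the set of submodules admitting an $\setS$-filtration, using the point annihilator hypothesis on $M/N$ to extend the filtration past any maximal element $N\subsetneq M$. The only cosmetic difference is that you include $0$ in $\T$ to guarantee nonemptiness, whereas the paper starts with nonzero submodules and invokes the hypothesis once to see that the set is nonempty.
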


\begin{proof}
The ``if'' direction is easy, so we will prove the ``only if'' part.
For convenience, we will refer to a filtration like the one described
above as as an \emph{$\setS$-filtration}. Suppose that $\setS$ is a noetherian
point annihilator set for $R$, and let $M_R \neq 0$ be noetherian. We
prove by noetherian induction that $M$ has an $\setS$-filtration. Consider
the set $\mathcal{X}$ of nonzero  submodules of $M$ that have an
$\setS$-filtration. Because $\setS$ is a noetherian point annihilator set,
it follows that $\mathcal{X}$ is nonempty. Since $M$ is noetherian,
$\mathcal{X}$ has a maximal element, say $N$. Assume for contradiction
that $N\neq M$. Then $M/N\neq 0$ is noetherian, and by hypothesis there
exists $I\in\setS$ with $I\neq R$ such that $R/I\cong N'/N\subseteq M/N$ for some
$N'_R\subseteq M$. But then $N\subsetneq N'\in\mathcal{X}$, contradicting
the maximality of $N$. Hence $M=N\in\mathcal{X}$, completing the proof.
\end{proof}

We wish to highlight a special type of point annihilator set in the
definition below.

\begin{definition}
\label{closed point annihilator set definition}
A set $\setS$ of right ideals of a ring $R$ is \emph{closed under point
annihilators} if, for all $I\in\setS$, every point annihilator of $R/I$
lies in $\setS$. (This is equivalent to saying that $I\in\setS$ and
$x\in R\setminus I$ imply $x^{-1}I\in \setS$.) If $\C$ is a class of right
$R$-modules, we will say that $\setS$ is a \emph{closed point annihilator
set for $\C$} if $\setS$ is a point annihilator set for $\C$ and $\setS$
is closed under point annihilators.
\end{definition}

The idea of the above definition is that $\setS$ is ``closed under
passing to further point annihilators of $R/I$'' whenever $I\in\setS$.
The significance of these closed point annihilator sets is demonstrated
by the next result.

\begin{lemma}
\label{closed point annihilator set}
Let $\C$ be a class of right modules over a ring $R$ that is closed under
taking submodules (e.g.\ the class of noetherian modules). Suppose that
$\setS$ is a closed point annihilator set for $\C$. Then for any other
point annihilator set $\setS_1$ of $\C$, the set $\setS_1\cap\setS$ is a
point annihilator set for $\C$.
\end{lemma}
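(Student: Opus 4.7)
The plan is to take an arbitrary nonzero $M\in\C$ and build a point annihilator of $M$ that lies simultaneously in $\setS$ and in $\setS_1$, by applying the two point annihilator hypotheses in sequence and then invoking the closure property of $\setS$.

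First I would use $\setS$: since $\setS$ is a point annihilator set for $\C$ and $M\in\C$ is nonzero, there exists $m\in M\setminus\{0\}$ with $I:=\ann(m)\in\setS$. The cyclic submodule $mR\subseteq M$ is isomorphic to $R/I$, and since $\C$ is closed under submodules (and isomorphism), $R/I$ may be regarded as a nonzero member of $\C$.

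Next I would use $\setS_1$ on this new module: applying the hypothesis that $\setS_1$ is a point annihilator set for $\C$ to $R/I\in\C$, there is a nonzero element $\overline{x}=x+I$ in $R/I$ (so $x\in R\setminus I$) with $\ann(\overline{x})\in\setS_1$. By definition $\ann(\overline{x})=x^{-1}I$, and by Lemma~\ref{isomorphic cyclic modules} this is the annihilator of the element $mx\in M$, which is nonzero because $x\notin I=\ann(m)$. Thus $x^{-1}I$ is already a point annihilator of $M$ itself and lies in $\setS_1$.

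Finally, the closure property of $\setS$ forces $x^{-1}I\in\setS$ as well: since $I\in\setS$ and $x\in R\setminus I$, Definition~\ref{closed point annihilator set definition} gives $x^{-1}I\in\setS$. Therefore $x^{-1}I\in\setS_1\cap\setS$ is the desired point annihilator of $M$, completing the argument. There is no real obstacle here: the proof is essentially one diagram chase, and the crucial point is simply that the operation $I\mapsto x^{-1}I$ (which is precisely how one passes from one point annihilator of $M$ to another by way of the submodule $mR\cong R/I$) is exactly what the closure condition on $\setS$ is designed to preserve.
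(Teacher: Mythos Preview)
Your argument is correct and follows essentially the same route as the paper's proof: pick $m$ with $\ann(m)\in\setS$, pass to the cyclic submodule $mR\cong R/I$ (which lies in $\C$ by closure under submodules), apply $\setS_1$ there to get a further point annihilator $x^{-1}I\in\setS_1$, and then use the closure of $\setS$ under point annihilators to conclude $x^{-1}I\in\setS$. The only cosmetic difference is that the paper works directly in $mR$ (writing the new element as $mr$) rather than transporting through the isomorphism to $R/I$; the invocation of Lemma~\ref{isomorphic cyclic modules} is not really needed, since $\ann(mx)=x^{-1}I$ is immediate from the definition.
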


\begin{proof}
Let $0\neq M_R\in\C$. Because $\setS$ is a point annihilator set for $\C$, 
there exists $0\neq m\in M$ such that $I:=\ann(m)\in\setS$. By the
hypothesis on $\C$, the module $mR$ lies in $\C$. Because $\setS_1$ is
also a point annihilator set for $\C$, there exists $0\neq mr\in mR$ such
that  $\ann(mr)\in\setS_1$. The fact that $\setS$ is closed implies that
$\ann(mr)\in\setS\cap\setS_1$.  This proves that $\setS\cap\setS_1$ is
also a point annihilator set for $\C$.
\end{proof}

The prototypical example of a noetherian point annihilator set is the
prime spectrum of a commutative ring. In fact, \emph{every} noetherian
point annihilator set in a commutative ring can be ``reduced to'' some
set of prime ideals, as we show below.

\begin{proposition}
\label{commutative point annihilator set}
In any commutative ring $R$, the set $\Spec (R)$ of prime ideals is a
closed noetherian point annihilator set. Moreover, given any noetherian
point annihilator set $\setS$ for $R$, the set $\setS\cap\Spec(R)$ is a
noetherian point annihilator subset of $S$ consisting of prime ideals.
\end{proposition}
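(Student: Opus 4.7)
The plan has two parts matching the two claims in the statement.

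For the first claim, I would first check closedness. If $P\in\Spec(R)$ and $x\in R\setminus P$, then by the definition of a prime ideal $x^{-1}P = \{r : xr\in P\} = P$, so $x^{-1}P\in\Spec(R)$. To show $\Spec(R)$ is a noetherian point annihilator set, I need every nonzero noetherian $M_R$ to admit a prime point annihilator. Pick any $0\neq m\in M$ and let $I=\ann(m)$. The submodule $mR\cong R/I$ is noetherian as a submodule of $M$, so $R/I$ is a noetherian ring. The classical argument for associated primes over a noetherian ring applies to the nonzero module $R/I$ over $R/I$: the collection of annihilators $\{\ann(\bar{x}) : 0\neq \bar{x}\in R/I\}$ has a maximal element (since $R/I$ is noetherian), and any such maximal annihilator is prime by the usual one-line check. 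Pulling back, there is a prime $P\supseteq I$ of $R$ and an element $x\in R$ with $xm\neq 0$ and $\ann(xm)=P$. So $P$ is a prime point annihilator of $M$.

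For the second claim, I would invoke Lemma~\ref{closed point annihilator set} directly. The class $\C$ of noetherian right $R$-modules is closed under submodules, and by the first part $\Spec(R)$ is a closed point annihilator set for $\C$. Applying the lemma with $\setS=\Spec(R)$ and $\setS_1$ equal to the given noetherian point annihilator set (which the statement calls $\setS$), one concludes that $\Spec(R)\cap\setS$ is again a noetherian point annihilator set. It is trivially a subset of $\setS$ and consists of prime ideals.

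The one step that needs actual thought is showing $\Spec(R)$ is a noetherian point annihilator set \emph{without} assuming $R$ itself is noetherian; the trick is the reduction to the noetherian quotient ring $R/\ann(m)$, after which the classical associated-prime existence argument goes through. Everything else (closedness of $\Spec(R)$, and the intersection statement) is an immediate appeal to the definitions and to Lemma~\ref{closed point annihilator set}.
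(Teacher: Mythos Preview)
Your proof is correct and follows the same outline as the paper: verify closedness via $x^{-1}P = P$ for $x\notin P$, show $\Spec(R)$ is a noetherian point annihilator set via associated primes, then invoke Lemma~\ref{closed point annihilator set} for the intersection statement. The paper simply cites Eisenbud for the existence of an associated prime in a noetherian module over an arbitrary commutative ring, whereas you supply the reduction to the noetherian quotient $R/\ann(m)$ yourself---a worthwhile clarification, since the cited result is typically stated only for modules over noetherian rings.
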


\begin{proof}
The set $\Spec (R)$ is a noetherian point annihilator set thanks to the
standard fact that any noetherian module over a commutative ring has an
associated prime; see, for example,~\cite[Thm.~3.1]{Eisenbud}. Furthermore,
this set is closed because for $P\in\Spec (R)$, the annihilator of any
nonzero element of $R/P$ is equal to $P$. The last statement now follows
from Lemma~\ref{closed point annihilator set}.
\end{proof}

In this sense right noetherian point annihilator sets of a ring generalize
the concept of the prime spectrum of a commutative ring.
However, one should not push this analogy too far: in a commutative ring
$R$, \emph{any} set $\setS$ of ideals containing $\Spec(R)$
is also a noetherian point annihilator set! In fact, with the help of
Proposition~\ref{commutative point annihilator set} it is easy to verify
that any commutative ring $R$ has smallest noetherian point annihilator
set $\setS_0:=\{P\in\Spec(R):R/P\text{ is noetherian}\}$, and that a set
$\setS$ of ideals of $R$ is a noetherian point annihilator set for $R$
iff $\setS\supseteq\setS_0$.

\separate

For most of the remainder of this section, we will record a number of
examples of point annihilator sets that will be useful in later applications.
Perhaps the easiest example is the following: the family of all right ideals
of a ring $R$ is a point annihilator set for any class of right $R$-modules.
A less trivial example: the family of maximal right ideals of a ring $R$ is a
point annihilator set for the class of right $R$-modules of finite length, or
for the larger class of artinian right modules. More specifically, according
to Remark~\ref{point annihilator set restated} it suffices to take any set
$\{\m_i\}$ of maximal right ideals such that the $R/\m_i$ exhaust all
isomorphism classes of simple right modules.

\begin{example}
\label{semi-artinian example}
Recall that a module $M_R$ is said to be semi-artinian if every
nonzero factor module of $M$ has nonzero socle, and that a ring $R$ is
right semi-artinian if $R_R$ is a semi-artinian module. One can readily verify
that $R$ is right semi-artinian iff every nonzero right $R$-module has nonzero
socle. Thus for such a ring $R$, the set of maximal right ideals is a point annihilator
set for $R$, and in particular it is a noetherian point annihilator set
for $R$.
\end{example}

\begin{example}
\label{left perfect example}
Let $R$ be a left perfect ring, that is, a semilocal ring
whose Jacobson radical is left $T$-nilpotent---see~\cite[\S23]{FC} for
details. (Notice that this class of rings includes semiprimary rings,
especially right \emph{or} left artinian rings.) By a theorem of Bass (see~\cite[(23.20)]{FC}),
over such a ring, every \emph{right} $R$-module satisfies DCC on cyclic submodules.
Thus every nonzero right module has nonzero socle, and such a ring is
right semi-artinian. But $R$ has finitely many simple modules up to
isomorphism (because the same is true modulo its Jacobson radical).
Choosing  a set $\setS =\{\m_1,\dots,\m_n\}$ of maximal right ideals such
that the modules $R/\m_i$ exhaust the isomorphism classes of simple right
$R$-modules, we conclude by Remark~\ref{point annihilator set restated}
that $\setS$ is a point annihilator set for any class of right modules
$\C$. Hence $\setS$ forms a right noetherian point annihilator set for
$R$. (The observant reader will likely have noticed that the same argument
applies more generally to any right semi-artinian ring with finitely many
isomorphism classes of simple right modules.)
\end{example}

Directly generalizing the fact that the prime spectrum of a commutative
ring is a noetherian point annihilator set, we have the following fact,
valid for any noncommutative ring.

\begin{proposition}
\label{completely prime NPA}
The set of completely prime right ideals in any ring $R$ is a noetherian
point annihilator set.
\end{proposition}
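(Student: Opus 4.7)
The plan is to choose, for a given nonzero noetherian right module $M_R$, an element $m \in M \setminus \{0\}$ whose annihilator $P := \ann(m)$ is maximal among annihilators of nonzero elements of $M$. Since $M$ is noetherian, the collection of such annihilators has a maximal element, so this choice is legitimate. I would then claim that this $P$ is completely prime as a right ideal, which by definition is exactly what is needed to witness that the set of completely prime right ideals is a noetherian point annihilator set.

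To verify the completely prime condition, suppose $a, b \in R$ satisfy $aP \subseteq P$ and $ab \in P$; we want $a \in P$ or $b \in P$. Assume $a \notin P$, so that $ma \neq 0$. The hypothesis $aP \subseteq P$ then gives $(ma)P = m(aP) \subseteq mP = 0$, so $P \subseteq \ann(ma)$. Since $ma$ is a nonzero element of $M$ and $P$ was chosen to be maximal among annihilators of nonzero elements, we must have $\ann(ma) = P$. Finally, $ab \in P$ means $m(ab) = (ma)b = 0$, hence $b \in \ann(ma) = P$, as required.

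There is not really a hard step here; the main subtlety is remembering that the definition of completely prime right ideal carries the hypothesis $aP \subseteq P$, which is what allows the passage from $\ann(m)$ to $\ann(ma)$ to work (that is, it is exactly what forces $P \subseteq \ann(ma)$ so that maximality can be invoked). Without that hypothesis the argument would break, which is precisely why completely prime right ideals, rather than some stronger noncommutative notion of prime, are the right class to single out. Noetherianness is used only once, to guarantee the existence of the maximal annihilator; no further appeal to the structure of $M$ is needed.
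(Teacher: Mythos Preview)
Your proof is correct and follows the same approach as the paper: pick a maximal point annihilator of the noetherian module $M$ and show it is completely prime. The only difference is that the paper outsources the verification that a maximal point annihilator is completely prime to \cite[Prop.~5.3]{Reyes}, whereas you carry out that short computation directly; your justification that a maximal point annihilator exists (``since $M$ is noetherian'') is slightly terse, but it is exactly the paper's reasoning that $R/\ann(m)\hookrightarrow M$ is noetherian, so right ideals containing $\ann(m)$ satisfy ACC.
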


\begin{proof}
Let $M_R\neq 0$ be noetherian. For any point annihilator $I=\ann(m)$
with $0\neq m\in M$, the module $R/I\hookrightarrow M$ is noetherian.
Thus $M$ must have a \emph{maximal} point annihilator $P_R\supseteq I$,
and $P$ is completely prime by~\cite[Prop.~5.3]{Reyes}. 
\end{proof}

Recall that in any ring $R$, the set of comonoform right ideals of $R$ forms
a subset of the set of all completely prime right ideals of $R$. As we show
next, the subset of comonoform right ideals is also a noetherian point
annihilator set.

\begin{proposition}
\label{comonoform NPA}
For any ring $R$, the set of comonoform right ideals in $R$ is a closed noetherian
point annihilator set.
\end{proposition}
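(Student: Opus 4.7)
The plan is to verify the two defining conditions: that the set of comonoform right ideals is closed under point annihilators, and that it is a point annihilator set for the class of noetherian right $R$-modules.

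Closure follows quickly from Lemma~\ref{isomorphic cyclic modules}. Let $P$ be comonoform and $x\in R\setminus P$, so that the point annihilator of $x+P\in R/P$ is $x^{-1}P$. By Lemma~\ref{isomorphic cyclic modules}, $R/x^{-1}P\cong(P+xR)/P$, a nonzero submodule of the monoform module $R/P$. A quick check shows that every nonzero submodule $N'$ of a monoform module $N$ is itself monoform: any nonzero homomorphism $L\to N'$ from a nonzero $L\subseteq N'$ composes with the inclusion $N'\hookrightarrow N$ to give a nonzero map into the monoform $N$, which is forced to be injective. Hence $R/x^{-1}P$ is monoform and $x^{-1}P$ is comonoform.

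For the noetherian point annihilator property, given a nonzero noetherian $M_R$, I need $0\neq m\in M$ such that $mR\cong R/\ann(m)$ is monoform. I would invoke the classical fact that every nonzero noetherian module contains a critical submodule: since $M$ has Krull dimension $\alpha=\Kdim M$, one picks $N\subseteq M$ minimal among nonzero submodules of Krull dimension $\alpha$, and verifies $N$ is $\alpha$-critical. A short argument shows every critical module is monoform---if $f\colon L\to N$ is nonzero and non-injective on a nonzero $L\subseteq N$, then $L/\ker f\cong\im f$ is a nonzero submodule of $N$, forcing $\Kdim(L/\ker f)=\alpha$; but $L$ inherits $\alpha$-criticality from $N$, forcing $\Kdim(L/\ker f)<\alpha$, a contradiction. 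Then any nonzero cyclic submodule $mR\subseteq N$ is monoform by the previous paragraph's observation, so $\ann(m)$ is a comonoform point annihilator of $M$.

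The main obstacle is really just marshalling a standard piece of noncommutative noetherian theory---the existence of critical (hence monoform) submodules in any nonzero noetherian module (see, e.g., Goodearl--Warfield). With that ingredient in hand, the rest of the proof reduces to the short application of Lemma~\ref{isomorphic cyclic modules} and the elementary stability of monoformness under passage to nonzero submodules.
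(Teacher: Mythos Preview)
Your proof is correct, but the route to a monoform submodule differs from the paper's. You pass through Krull dimension: noetherian modules have Krull dimension, hence contain a critical submodule, and critical implies monoform. The paper instead gives a direct argument that avoids Krull dimension entirely: it chooses $L\subseteq M$ maximal with respect to the property that some nonzero cyclic submodule $N\subseteq M/L$ embeds in $M$, and then verifies directly that such $N$ must be monoform. This keeps the argument self-contained within the monoform framework. Your approach is essentially the content of the later Proposition~\ref{cocritical NPA} combined with Remark~\ref{cocritical is comonoform}(2), so within the paper's logical order you are deriving this proposition from ingredients that appear afterward; there is no circularity, since those ingredients are independent of the present proposition, but it does mean the paper's direct argument buys a cleaner dependency structure.

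One small correction to your parenthetical: the sketch ``pick $N\subseteq M$ minimal among nonzero submodules of Krull dimension $\alpha$'' does not work as stated, since the noetherian hypothesis gives ACC rather than DCC, and such a minimal $N$ need not exist (think of $\mathbb{Z}$, where every nonzero ideal has Krull dimension $1$). The standard proof in Goodearl--Warfield proceeds by transfinite induction on $\alpha$ instead. Since you cite that reference anyway, this does not affect the validity of your argument, but the inline justification should be dropped or replaced. Your closure argument via Lemma~\ref{isomorphic cyclic modules} is correct and matches the paper's implicit reasoning.
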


\begin{proof}
Because a nonzero submodule of a monoform module is again monoform,
Remark~\ref{point annihilator set restated} shows that it is enough to
check that any nonzero noetherian module $M_R$ has a monoform submodule.
This has already been noted, for example, in~\cite[4.6.5]{McConnellRobson}.
We include a separate proof for the sake of completeness.

Let $L_R\subseteq M$ be maximal with respect to the property that there
exists a nonzero cyclic submodule $N\subseteq M/L$ that can be embedded
in $M$. It is readily verified that $N$ is monoform, and writing $N\cong R/I$
for some comonoform right ideal $I$, the fact that $N$ embeds in $I$ shows
that $I$ is a point annihilator of $M$.
%
%
\end{proof}

Our most ``refined'' instance of a noetherian point annihilator set for a general
noncommutative ring is connected to the concept of (Gabriel-Rentschler) Krull
dimension. We review the relevant definitions here, and we refer the reader to the
monograph~\cite{GordonRobson} or the textbooks~\cite[Ch.~15]{GoodearlWarfield}
or~\cite[Ch.~6]{McConnellRobson} for further details. Define by induction
classes $\mathcal{K}_{\alpha}$ of right $R$-modules for each ordinal $\alpha$
(for convenience, we consider $-1$ to be an ordinal number) as follows. Set
$\mathcal{K}_{-1}$ to be the class consisting of the zero module. Then for an
ordinal $\alpha\geq 0$ such that $\mathcal{K}_{\beta}$ has been defined for
all ordinals $\beta <\alpha$, define $\mathcal{K}_{\alpha}$ to be the class
of all modules $M_R$ such that, for every descending chain
\[
M_0\supseteq M_1\supseteq M_2\supseteq\cdots
\]
of submodules of $M$ indexed by natural numbers, one has
$M_i/M_{i+1}\in\bigcup_{\beta <\alpha} \mathcal{K}_{\beta}$ for almost all
indices $i$. Now if a module $M_R$ belongs to some $\mathcal{K}_{\beta}$,
its Krull dimension, denoted $\Kdim (M)$, is defined to be the least ordinal $\alpha$ such that
$M\in\mathcal{K}_{\alpha}$. Otherwise we say that the Krull dimension of
$M$ does not exist.

From the definitions it is easy to see that the right $R$-modules of Krull
dimension 0 are precisely the (nonzero) artinian modules.  Also, a module
$M_R$ has Krull dimension 1 iff it is not artinian and in every descending
chain of submodules of $M$, almost all filtration factors are artinian.

One of the more useful features of the Krull dimension function is
that it is an \emph{exact} dimension function, in the sense that,
given an exact sequence $0 \to L \to M \to N \to 0$ of right $R$-modules,
one has
\[
\Kdim (M) = \sup ( \Kdim (L) , \Kdim (N) )
\]
where either side of the equation exists iff the other side exists.
See~\cite[Lem.~15.1]{GoodearlWarfield} or~\cite[Lem.~6.2.4]{McConnellRobson}
for details. 

The Krull dimension can also be used as a dimension measure for rings.
We define the \emph{right Krull dimension} of a ring $R$ to be
$\rKdim(R)=\Kdim(R_R)$. The left Krull dimension of $R$ is defined
similarly.

Now a module $M_R$ is said to be \emph{$\alpha$-critical} ($\alpha\geq 0$ an ordinal)
if $\Kdim (M)=\alpha$ but $\Kdim (M/N)<\alpha$ for all $0\neq N_R\subseteq M$,
and we say that $M_R$ is \emph{critical} if it is $\alpha$-critical for some ordinal
$\alpha$. With this notion in place, we define a right ideal $I_R\subseteq R$ to be
\emph{$\alpha$-cocritical} if the module $R/I$ is $\alpha$-critical, and we say
that $I$ is \emph{cocritical} if it is $\alpha$-cocritical for some ordinal
$\alpha$. Notice immediately that a 0-critical module is the same as a simple module,
and the 0-cocritical right ideals are precisely the maximal right ideals.

Cocritical right ideals were already studied by A.\,W.~Goldie in~\cite{Goldie},
though they are referred to there as ``critical'' right ideals. (The reader
should take care not to confuse this terminology with the phrase ``critical
right ideal'' used in a different sense elsewhere in the literature, as mentioned
in~\cite[\S6]{Reyes}.)

\begin{remarks}
\label{cocritical is comonoform}
The first two remarks below are known; for example,
see~\cite[\S 6.2]{McConnellRobson}. 

\begin{itemize}
\item[\normalfont{(1)}]
\emph{Every nonzero submodule $N$ of a critical module $M$ is also
critical and has $\Kdim (N)=\Kdim (M)$}. Suppose that $M$ is $\alpha$-critical.
If $\Kdim(N)<\alpha$, then because $\Kdim(M/N)<\alpha$, the exactness
of Krull dimension would imply the contradiction  $\Kdim(M)<\alpha$.
Hence $\Kdim(N)=\alpha$. Also, for any nonzero submodule
$N_{0}\subseteq N$ we have $\Kdim(N/N_{0})\leq\Kdim(M/N_{0})<\alpha$,
proving that $N$ is $\alpha$-critical.

\item[\normalfont{(2)}]
\emph{A critical module is always monoform}. Suppose that $M$ is
$\alpha$-critical and fix a nonzero homomorphism $f \colon C \to M$
where $C_{R} \subseteq M$. Because $C$ and $\im f$ are both nonzero
submodules of $M$, they are also $\alpha$-critical by~(1). Then
$\Kdim(C) = \Kdim(\im f) = \Kdim(C/\ker f)$, so we must have $\ker f=0$.
Thus $M$ is indeed monoform.

\item[\normalfont{(3)}]
\emph{Any cocritical right ideal is comonoform and, in particular, is
completely prime}. This follows immediately from the preceding remark
and the fact~\cite[Prop.~6.3]{Reyes} that any
comonoform right ideal is completely prime.
\end{itemize}
\end{remarks}

It is possible to characterize the (two-sided) ideals that are cocritical
as right ideals.

\begin{proposition}
\label{cocritical ideals}
For any ring $R$ and any ideal $P\lhd R$, the following are equivalent:
\begin{itemize}
\item[\normalfont (1)] $P$ is cocritical as a right ideal;
\item[\normalfont (2)] $R/P$ is a (right Ore) domain with right Krull dimension.
\end{itemize}
\end{proposition}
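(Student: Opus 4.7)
The plan is to combine Remarks~\ref{cocritical is comonoform} with a descending-chain trick that is available precisely because $P$ is two-sided. The two-sidedness is used in two places: it makes $R/P$ a ring, and it makes the image of every element $a \in R/P$ act on $R/P$ by right multiplication, so that the chain $a^n(R/P)$ is meaningful.

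For (1)$\Rightarrow$(2), suppose $P$ is $\alpha$-cocritical as a right ideal, so that $R/P$ is $\alpha$-critical. By Remark~\ref{cocritical is comonoform}(2), $P$ is comonoform, hence completely prime by \cite[Prop.~6.3]{Reyes}; combined with $P$ being two-sided, this forces $R/P$ to be a domain. By definition of $\alpha$-critical, $\rKdim(R/P) = \alpha$. To finish I would show that $D := R/P$ is right Ore using the fact that any module with Krull dimension has finite uniform dimension (a standard consequence of the chain definition, see \cite[Ch.~15]{GoodearlWarfield}). If the Ore condition failed, there would be nonzero $a,b \in D$ with $aD \cap bD = 0$; since $D$ is a domain, both $aD$ and $bD$ are isomorphic to $D$, so one could iterate to embed $D^n$ into $D$ for all $n$, contradicting finite Goldie dimension.

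For (2)$\Rightarrow$(1), let $D = R/P$ be a right Ore domain with $\rKdim(D) = \alpha$, and let $I$ be a nonzero right ideal of $D$. Pick a nonzero element $a \in I$ and form the descending chain
\[
D \supseteq aD \supseteq a^2 D \supseteq a^3 D \supseteq \cdots
\]
in $D_D$. Because $D$ is a domain, left multiplication by $a^n$ is injective and induces an isomorphism $a^n D / a^{n+1} D \cong D/aD$ for every $n \geq 0$. By the inductive definition of $\mathcal{K}_\alpha$, applied to this chain in $D \in \mathcal{K}_\alpha$, almost every factor $a^n D / a^{n+1}D$ must lie in $\bigcup_{\beta < \alpha}\mathcal{K}_\beta$; since they are all isomorphic to $D/aD$, we conclude $\Kdim(D/aD) < \alpha$. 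As $D/I$ is a quotient of $D/aD$, exactness of Krull dimension gives $\Kdim(D/I) \leq \Kdim(D/aD) < \alpha$. Thus $D = R/P$ is $\alpha$-critical as a right $R$-module, i.e.\ $P$ is $\alpha$-cocritical.

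The step I expect to be the main obstacle is the descending-chain argument in (2)$\Rightarrow$(1): one must notice that the powers $a^n D$ give a chain whose factors are all isomorphic to $D/aD$, so that the definition of $\mathcal{K}_\alpha$ forces $D/aD$ (and hence any nonzero proper factor of $D$) to have strictly smaller Krull dimension. This is where the hypothesis that $P$ is two-sided is genuinely used, since otherwise one cannot form the ring $R/P$ or the chain $\{a^n D\}$ in the appropriate way.
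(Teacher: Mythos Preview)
Your proof is correct and follows essentially the same route as the paper's. For (2)$\Rightarrow$(1), the paper cites the general fact that an injective endomorphism $f$ of a module $M$ with Krull dimension satisfies $\Kdim(M)>\Kdim(M/f(M))$ (see~\cite[Lem.~15.6]{GoodearlWarfield}), applied with $f$ equal to left multiplication by a nonzero element; your descending chain $D\supseteq aD\supseteq a^2D\supseteq\cdots$ with all factors isomorphic to $D/aD$ is exactly the standard proof of that fact, specialized to this situation. For (1)$\Rightarrow$(2), the paper instead invokes \cite[Prop.~6.5]{Reyes} (a two-sided ideal is comonoform iff the factor ring is a right Ore domain), whereas you unpack this by first passing through ``completely prime'' to get the domain property and then arguing Ore directly via finite uniform dimension; both arguments are standard and equivalent. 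Note, incidentally, that the right Ore hypothesis in~(2) is redundant---the paper observes that a domain with right Krull dimension is automatically right Goldie, hence right Ore---so it is consistent that your (2)$\Rightarrow$(1) argument never uses it.
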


\begin{proof}
Because every cocritical right ideal is comonoform, (1)$\implies$(2) follows from
the fact~\cite[Prop.~6.5]{Reyes} that a two-sided ideal is comonoform as a
right ideal iff its factor ring is a right Ore domain. Because a semiprime ring with
right Krull dimension is right Goldie, the two conditions in~(2) are equivalent.

(2)$\implies$(1): It can be shown that, for every module $M_R$ whose Krull
dimension exists and for every injective endomorphism $f:M\to M$, one has
$\Kdim (M) > \Kdim (M/f(M))$ (see~\cite[Lem.~15.6]{GoodearlWarfield}). Applying
this to $M=S:=R/P$, we see that $\Kdim (S) > \Kdim (S/xS)$ for all nonzero
$x\in S$ (this is also proved in~\cite[Lem.~6.3.9]{McConnellRobson}). Thus
$S_S$, and consequently $S_R$, are critical modules.
\end{proof}

\begin{example}
\label{comonoform not cocritical}
The last proposition is useful for constructing an ideal of a ring that is
(right and left) comonoform but not (right or left) cocritical. If $R$
is a commutative domain that does not have Krull dimension, then the zero ideal
of $R$ is prime and thus is comonoform by~\cite[Prop.~6.5]{Reyes}.
But because $R$ does not have Krull dimension, the zero ideal cannot be cocritical
by the previous result. For an explicit example, one can take $R=k[x_1,x_2,\dots ]$
for some commutative domain $k$. It is shown in~\cite[Ex.~10.1]{GordonRobson} 
that such a ring does not have Krull dimension, using the fact that a polynomial
ring $R[x]$ has right Krull dimension iff the ground ring $R$ is right noetherian.
\end{example}

The reason for our interest in the set of cocritical right ideals is that it
is an important example of a noetherian point annihilator set in a general
ring.

\begin{proposition}
\label{cocritical NPA}
For any ring $R$, the set of all cocritical right ideals is a closed point
annihilator set for the class of right $R$-modules whose Krull dimension
exists. In particular, this set is a closed noetherian point annihilator set
for $R$.
\end{proposition}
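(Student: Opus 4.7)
The plan is to prove two things: (a) every nonzero right $R$-module with Krull dimension has a point annihilator that is cocritical, and (b) the set of cocritical right ideals is closed under point annihilators. The ``in particular'' statement will then follow from the fact that every noetherian module has Krull dimension (a standard result).

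For part~(a), by Remark~\ref{point annihilator set restated} it suffices to show that any nonzero $M_R$ with Krull dimension contains a critical cyclic submodule (for then $cR\cong R/\ann(c)$ embeds in $M$ and $\ann(c)$ is cocritical by Remark~\ref{cocritical is comonoform}(1)). First I would produce a critical submodule of $M$. Let $\beta$ be the minimum of $\Kdim(N)$ as $N$ ranges over the nonzero submodules of $M$ (this exists as the ordinals are well-ordered), and pick $N_0\neq 0$ with $\Kdim(N_0)=\beta$; by the minimality of $\beta$, every nonzero submodule of $N_0$ also has Krull dimension $\beta$. I claim $N_0$ contains a critical submodule. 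Suppose for contradiction that no nonzero submodule of $N_0$ is critical. Then for every $0\neq C\subseteq N_0$ there exists $0\neq C'\subseteq C$ with $\Kdim(C/C')\geq\beta$, hence (by exactness of Krull dimension) $\Kdim(C/C')=\beta$. Iterating, build a descending chain $N_0=C_1\supseteq C_2\supseteq\cdots$ with $\Kdim(C_i/C_{i+1})=\beta$ for all $i$; this contradicts $\Kdim(N_0)=\beta$, since the definition of $\mathcal{K}_\beta$ requires almost all filtration factors to have Krull dimension strictly less than $\beta$. Thus some nonzero $C\subseteq N_0$ is critical, and any nonzero cyclic submodule of $C$ is again critical by Remark~\ref{cocritical is comonoform}(1), completing part~(a).

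For part~(b), let $I$ be cocritical and $x\in R\setminus I$. By Lemma~\ref{isomorphic cyclic modules},
\[
R/x^{-1}I \;\cong\; (I+xR)/I,
\]
and the right-hand side is a nonzero submodule of the critical module $R/I$ (nonzero because $x\notin I$). By Remark~\ref{cocritical is comonoform}(1), every nonzero submodule of a critical module is itself critical, so $R/x^{-1}I$ is critical and $x^{-1}I$ is cocritical. This verifies the closure condition from Definition~\ref{closed point annihilator set definition}.

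Finally, for the ``in particular'' assertion, I would invoke the standard fact that every noetherian module has Krull dimension (see, e.g., \cite[Lem.~15.3]{GoodearlWarfield} or \cite[Lem.~6.2.3]{McConnellRobson}); combined with parts~(a) and~(b), this shows that the cocritical right ideals form a closed noetherian point annihilator set for $R$. The main obstacle is the existence of a critical submodule inside an arbitrary nonzero module with Krull dimension; the rest is essentially a bookkeeping exercise using the isomorphism in Lemma~\ref{isomorphic cyclic modules} and the hereditary nature of criticality recorded in Remark~\ref{cocritical is comonoform}(1).
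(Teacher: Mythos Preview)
Your proof is correct and follows essentially the same approach as the paper. The only difference is that where the paper simply cites the existence of a critical submodule in any nonzero module with Krull dimension (referring to \cite[Lem.~15.8]{GoodearlWarfield} or \cite[Lem.~6.2.10]{McConnellRobson}), you supply a self-contained argument for this fact; your treatment of closure under point annihilators via Lemma~\ref{isomorphic cyclic modules} is exactly what the paper's terse reference to Remark~\ref{cocritical is comonoform}(1) unpacks to.
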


\begin{proof}
Because any nonzero module with Krull dimension has a critical submodule
(see~\cite[Lem.~15.8]{GoodearlWarfield} or~\cite[Lem.~6.2.10]{McConnellRobson}),
Remark~\ref{point annihilator set restated} shows that the set of cocritical
right ideals of $R$ is a point annihilator set for the class of right
$R$-modules with Krull dimension. Because any noetherian module has Krull
dimension (see~\cite[Lem.~15.3]{GoodearlWarfield} or~\cite[Lem.~6.2.3]{McConnellRobson}),
we see by Remark~\ref{larger point annihilator sets} that this same set is
a right noetherian point annihilator set for $R$. The fact that this set
is closed under point annihilators follows from Remark~\ref{cocritical is comonoform}(1).
\end{proof}

\separate

Let us further examine the relationship between the general noetherian point annihilator
sets given in Propositions~\ref{completely prime NPA},~\ref{comonoform NPA},
and~\ref{cocritical NPA}. From~\cite[Prop.~6.3]{Reyes} and
Remark~\ref{cocritical is comonoform}(3) we see that there are always the following
containment relations (where the first three sets are noetherian point annihilator sets
but the last one is not, in general):
\begin{equation}
\label{inclusion of NPA sets}
\left\{
\begin{array}{c}
\text{completely}\\
\text{prime}\\
\text{right ideals}
\end{array}
\right\}\supseteq\left\{
\begin{array}{c}
\text{comonoform}\\
\text{right ideals}
\end{array}
\right\}\supseteq\left\{
\begin{array}{c}
\text{cocritical}\\
\text{right ideals}
\end{array}
\right\}\supseteq\left\{
\begin{array}{c}
\text{maximal}\\
\text{right ideals}
\end{array}
\right\}.
\end{equation}
Notice that in a commutative ring $R$ the first two sets are equal to $\Spec(R)$
by~\cite[Cor.~2.3 \& Cor.~6.7]{Reyes},
and when $R$ is commutative and has Gabriel-Rentschler Krull dimension (e.g., when $R$ is
noetherian) the third set is also equal to $\Spec(R)$ by Proposition~\ref{cocritical ideals}.
The latter fact provides many examples where the last containment is strict: in any
commutative ring $R$ with Krull dimension $>0$ there exists a nonmaximal prime ideal,
which must be cocritical.
It was shown in~\cite[Lem.~6.4~ff.]{Reyes} and Example~\ref{comonoform not cocritical}
that the first two inclusions can each be strict. However, the latter example was necessarily
non-noetherian. Below we give an example of a noncommutative \emph{artinian}
(hence noetherian) ring over which both containments are strict.
This example makes use of the following characterization of semi-artinian monoform
modules. The proof is straightforward and therefore is omitted. The socle of a
module $M_R$ is denoted by $\soc(M)$.

\begin{lemma}
\label{semi-artinian monoform}
Let $M_R$ be a semi-artinian $R$-module. Then the following are equivalent:
\begin{itemize}
\item[\normalfont (1)] $M$ is monoform;
\item[\normalfont (2)] For any nonzero submodule $K_R\subseteq M$, $\soc(M)$ and
$\soc(M/K)$ do not have isomorphic nonzero submodules;
\item[\normalfont (3)] $\soc(M)$ is simple and does not embed into any proper
factor module of $M$.
\end{itemize}
\end{lemma}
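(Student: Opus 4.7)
The plan is to establish a cyclic chain of implications $(1) \Rightarrow (2) \Rightarrow (3) \Rightarrow (1)$, leveraging the fact that every nonzero subquotient of the semi-artinian module $M$ is itself semi-artinian and thus has nonzero socle, so simple submodules reliably witness the structure we need.

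For $(1) \Rightarrow (2)$, suppose for contradiction that there is a nonzero $K \subseteq M$ with a simple $S_1 \subseteq \soc(M)$ and a submodule $L$ satisfying $K \subsetneq L \subseteq M$ such that $L/K \cong S_1$ inside $\soc(M/K)$. Then the composition $L \twoheadrightarrow L/K \xrightarrow{\sim} S_1 \hookrightarrow M$ is a nonzero, non-injective homomorphism from a submodule of $M$ into $M$, violating monoformness. For $(2) \Rightarrow (3)$, the simplicity of $\soc(M)$ follows by picking two distinct simple submodules $T_1, T_2 \subseteq \soc(M)$ (if any), observing $T_1 \cap T_2 = 0$, and noting that $T_2$ then embeds into $\soc(M/T_1)$ while also sitting inside $\soc(M)$, contradicting (2) with $K = T_1$. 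The non-embedding of the simple $\soc(M)$ into any proper factor follows similarly: any such embedding necessarily lands in $\soc(M/K)$ and duplicates the simple socle on both sides.

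The main work is $(3) \Rightarrow (1)$. Take a nonzero homomorphism $f \colon N \to M$ from a nonzero submodule $N \subseteq M$, and assume toward contradiction that $K := \ker f \neq 0$. Then $f(N) \cong N/K$ is a nonzero submodule of the semi-artinian $M$, so has nonzero socle; any simple submodule of $\soc(f(N))$ is simple in $M$ and hence must equal $\soc(M)$, which is simple by (3). Thus $\soc(M) \subseteq f(N)$, and setting $N_0 := f^{-1}(\soc(M))$ yields $K \subseteq N_0 \subseteq M$ with $N_0/K \xrightarrow{\sim} \soc(M)$. Passing to $M/K$, we obtain an embedding $\soc(M) \cong N_0/K \hookrightarrow M/K$ into a proper factor of $M$, contradicting (3).

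The main obstacle is this last step: condition (3) controls proper factors of $M$ itself, whereas the failure of monoformness is a statement about maps $f$ out of a submodule $N \subseteq M$. The key maneuver is to pull $\soc(M)$ back through $f$ via $N_0 = f^{-1}(\soc(M))$, thereby realizing $\soc(M)$ as a subquotient $N_0/K$ of $M$ (and not merely of $N$), which is what upgrades the failure of injectivity of $f$ into the forbidden embedding $\soc(M) \hookrightarrow M/K$.
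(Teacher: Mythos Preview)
Your argument is correct. The paper itself omits the proof of this lemma as ``straightforward,'' so there is no authorial proof to compare against; your cyclic chain $(1)\Rightarrow(2)\Rightarrow(3)\Rightarrow(1)$, including the key pullback $N_0=f^{-1}(\soc(M))$ to convert a non-injective $f\colon N\to M$ into a forbidden embedding $\soc(M)\hookrightarrow M/K$, is exactly the kind of direct verification one would expect and is fully valid.
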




\begin{example}
Let $k$ be a division ring, and let $R$ be the ring of all $3\times 3$ matrices
over $k$ of the form
\begin{equation}
\label{triangular}
\begin{pmatrix}
a & b & c\\
0 & d & e\\
0 & 0 & d
\end{pmatrix}
.
\end{equation}
One can easily verify (for example, by passing to the factor $R/\rad(R)$ of $R$
by its Jacobson radical)
that $R$ has two simple right modules up to isomorphism. We may view these
modules as $S_1=k$ with right $R$-action given by right multiplication by $a$
in~\eqref{triangular} and $S_2=k$ with right action given by right
multiplication by $d$ in~\eqref{triangular}. Consider the right ideals 
\[
P_0 := \left\{
\begin{pmatrix}
0 & 0 & 0\\
0 & d & e\\
0 & 0 & d
\end{pmatrix}
\right\} \subseteq P_1 := \left\{
\begin{pmatrix}
0 & 0 & c\\
0 & d & e\\
0 & 0 & d
\end{pmatrix}
\right\} \subseteq P_2 := \left\{
\begin{pmatrix}
0 & b & c\\
0 & d & e\\
0 & 0 & d
\end{pmatrix}
\right\}.
\]
Then the cyclic module $V:=R/P_0$ is isomorphic to the space $(k\ k\ k)_R$
of row vectors with the natural right $R$-action. Notice that $V_i:=P_i/P_0$
($i=1,2$) corresponds to the submodule of row vectors whose first $3-i$ entries
are zero. One can check that the only submodules of $V$ are
$0\subseteq V_1 \subseteq V_2 \subseteq V$, which implies that this is the
unique composition series of $V$. It is clear that
\[
V_1 \cong V_2/V_1 \cong S_2 \quad \text{and} \quad V/V_2 \cong S_1.
\]

We claim that $P_0$ is a completely prime right ideal that is not comonoform.
To see that it is completely prime, it suffices to show that every nonzero
endomorphism of $V=R/P_0$ is injective. Indeed, the only proper factors of
$V$ are $V/V_1$ and $V/V_2$. By an inspection of composition factors, neither
of these can embed into $V$, proving that $P_0$ is completely prime. To see that
$P_0$ is not comonoform, consider that 
\[
\soc (V) = V_1 \cong V_2/V_1 =\soc (V/V_1).
\]
By Lemma~\ref{semi-artinian monoform} we see that $R/P_0=V$ is not monoform and thus
$P_0$ is not comonoform.

We also claim that $P_1$ is comonoform but not cocritical. Notice that over any
right artinian ring, every cyclic critical module has Krull dimension 0. But a
0-critical module is necessarily simple. Thus a cocritical right ideal in a right
artinian ring must be maximal. But $P_1$ is not maximal and thus is not cocritical.
On the other hand, $R/P_1\cong V/V_1$ has unique composition series
$0\subseteq V_2/V_1 \subseteq V/V_1$. This allows us to easily verify, using
Lemma~\ref{semi-artinian monoform}, that $V/V_1\cong R/P_1$ is monoform,
proving that $P_1$ is comonoform.

This same example also demonstrates that the set of completely prime right ideals
is not always closed under point annihilators (as in Definition~\ref{closed point annihilator set definition}).
This is because the cyclic submodule $V_2\subseteq V=R/P_0$ certainly has a nonzero
noninjective endomorphism, as both of its composition factors are isomorphic.
\end{example}

An example along these lines was already used in~\cite[p.~11]{GordonRobson}
to show that a monoform module need not be critical. Notice that the completely
prime right ideal $P_0$ above is such that $R/P_0$ is uniform, even if it is
not monoform. (This means that the right ideal $P_0$ is ``meet-irreducible.'')
An example of a completely prime right ideal whose factor module is not uniform was
already given in Example~{completely prime not meet-irreducible}.

\separate

Given the containments of noetherian point annihilator sets
in~\eqref{inclusion of NPA sets}, one might question the need for the
notion of a point annihilator set. Why not simply state all theorems below
just for the family of cocritical right ideals? We already have an answer to
this question in Example~\ref{left perfect example}, which
demonstrates that every left perfect ring has a \emph{finite} right noetherian
point annihilator set. The reason we can reduce to a finite set $\setS$ in
such rings is the fact stated in Remark~\ref{point annihilator set restated}
that a noetherian right module only needs to contain a submodule \emph{isomorphic to}
$R/I$ for some $I\in\setS$. In other words, $\setS$ only needs to contain a
single representative from any given similarity class. So while a left perfect
ring $R$ may have infinitely many maximal right ideals, it has only
finitely many similarity classes of maximal right ideals. Thus we can reduce
certain problems about all right ideals of $R$ to a finite set of maximal
ideals! This will be demonstrated in Proposition~\ref{semi-artinian right artinian}
 and Corollary~\ref{left perfect PRIR}, below where we shall prove that a
left perfect ring is right noetherian (resp.\ a PRIR) iff all maximal right ideals
belonging to a (properly chosen) finite set are finitely generated (resp.\ principal). 

We have also phrased the discussion in terms of general noetherian point
annihilator sets to leave open the possibility of future applications to
classes of rings which have nicer noetherian point annihilator sets than
the whole set of cocritical right ideals, akin to the class of left perfect
rings.

\section{The Point Annihilator Set Theorem}
\label{point annihilator theorem section}

Having introduced the notion of a point annihilator set, we can now state our
fundamental result, the Point Annihilator Set Theorem~\ref{first point annihilator set theorem}.
This theorem gives conditions under which one may deduce that one family $\F_0$ of right
ideals is contained in a second family $\F$ of right ideals. We will most often use it as a
sufficient condition for concluding that all right ideals of a ring lie in a particular
right Oka family $\F$.

Certain results in commutative algebra state that when every prime ideal in a
commutative ring has a certain property, then all ideals in the ring have
that property. As mentioned in the introduction, the two motivating examples
are Cohen's Theorem~\ref{original Cohen's Theorem} and Kaplansky's
Theorem~\ref{original Kap Cohen Theorem}. In~\cite[p.~3017]{LR}, these theorems
were both recovered in the context of Oka families and the Prime Ideal
Principle. The useful tool in that context was the ``Prime Ideal Principle
Supplement''~\cite[Thm.~2.6]{LR}. 
We have already provided one noncommutative generalization of this tool in the
Theorem~\ref{CPIP supplement}, which we used to produce a noncommutative
extension of Cohen's Theorem in~\cite[Thm.~3.8]{Reyes} stating that a ring is
right noetherian iff its completely prime right ideals are finitely generated.

The CPIP Supplement states that for certain right Oka families $\F$, if the
set $\setS$ of completely prime right ideals lies in $\F$, then all right ideals
lie in $\F$. The main goal of this section is to improve upon this result 
by allowing the set $\setS$ to be \emph{any} point annihilator set. This is
achieved in Theorem~\ref{"PIP supplement"} as an application of
the Point Annihilator Set Theorem.

The Point Annihilator Set Theorem basically formalizes a general ``strategy of proof.''
For the sake of clarity, we present an informal sketch of this proof strategy before stating
the theorem.
Suppose that we want to prove that \emph{every module with the property $\mathcal{P}$
also has the property $\mathcal{Q}$}.
Assume for contradiction that there is a counterexample. Use Zorn's Lemma to pass
to a counterexample $M$ satisfying $\mathcal{P}$ that is ``critical'' with respect to not
satisfying $\mathcal{Q}$, in the sense that every proper factor module of $M$
satisfies $\mathcal{Q}$ but $M$ itself does not satisfy $\mathcal{Q}$. Argue that $M$
has a nonzero submodule $N$ that satisfies $\mathcal{Q}$. Finally, use the fact that
$N$ and $M/N$ have $\mathcal{Q}$ to deduce the contradiction that $M$ has $\mathcal{Q}$.

Our theorem applies in the specific case where one's attention is restricted to
cyclic modules. In the outline above, we may think of the properties $\mathcal{P}$ and
$\mathcal{Q}$ to be, respectively, ``$M=R/I$ where $I\in\F_0$'' and ``$M=R/I$ where
$I\in\F$.''

\begin{theorem}[The Point Annihilator Set Theorem]
\label{first point annihilator set theorem}
Let $\F$ be a right Oka family such that every nonempty chain of right ideals
in $\F'$ (with respect to inclusion) has an upper bound in $\F'$. 
\begin{itemize}
\item[\normalfont (1)] Let $\F_0$ be a semifilter of right ideals in $R$. If $\F$ is
a point annihilator set for the class of modules $\{ R/I : I_R \in \Max(\F')\cap\F_0 \}$, 
then $\F_0\subseteq\F$.
\item[\normalfont (2)] For any right ideal $J_R\subseteq R$, if $\F$ is a point annihilator
set for the class of modules $R/I$ such that $I\in\Max(\F')$ and $I\supseteq J$
(resp.\ $I\supsetneq J$), then all right ideals containing (resp.\ properly
containing) $J$ belong to $\F$.
\item[\normalfont (3)] If $\F$ is a point annihilator set for the class of modules $\{R/I : I_R\in\Max(\F')\}$,
then $\F$ consists of all right ideals of $R$.
\end{itemize}
\end{theorem}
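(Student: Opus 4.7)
The plan is to prove (1) in full and then recover (2) and (3) as special cases by specializing $\F_0$. For (2), one takes $\F_0 = \{I : I \supseteq J\}$ or $\F_0 = \{I : I \supsetneq J\}$, each of which is plainly a semifilter; for (3), one takes $\F_0$ to be the family of all right ideals, which is trivially a semifilter.

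For part (1), I would argue by contradiction. Assume $\F_0 \not\subseteq \F$ and pick $I_0 \in \F_0 \cap \F'$. The first step is to extract a maximal counterexample: apply Zorn's Lemma to the poset $\{I \in \F' : I \supseteq I_0\}$, which is nonempty and inherits the chain upper-bound property from $\F'$ by hypothesis, to produce $P \supseteq I_0$ maximal there. A short check shows $P$ is maximal in all of $\F'$, since any strict enlargement of $P$ still contains $I_0$. Because $\F_0$ is a semifilter and $P \supseteq I_0 \in \F_0$, we also have $P \in \F_0$, so $P \in \Max(\F') \cap \F_0$.

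The second step is to invoke the point annihilator hypothesis on $R/P$, which is nonzero since $R \in \F$ forces $P \neq R$. It yields some $a \in R \setminus P$ with $a^{-1}P = \ann(a + P) \in \F$. At the same time, $P + aR$ strictly contains $P$, so maximality of $P$ in $\F'$ gives $P + aR \in \F$. The Oka property applied to $I = P$ and the element $a$ then yields $P \in \F$, contradicting $P \in \F'$.

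I do not anticipate any serious obstacle; the argument is essentially forced once the Oka axiom, the chain condition, and the point annihilator condition are laid side by side. The only point requiring care is the role of the semifilter hypothesis, which is exactly what propagates the membership $I_0 \in \F_0$ upward to the maximal element $P$, ensuring that $P$ lies in the class $\Max(\F') \cap \F_0$ on which $\F$ is assumed to be a point annihilator set.
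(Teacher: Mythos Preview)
Your proof is correct and follows essentially the same argument as the paper: a Zorn's Lemma pass to a maximal counterexample $P\in\Max(\F')\cap\F_0$, followed by the point annihilator hypothesis to obtain $a\notin P$ with $a^{-1}P\in\F$, and then the Oka property together with $P+aR\in\F$ to derive the contradiction $P\in\F$. The paper likewise deduces (2) and (3) from (1) by the same specializations of $\F_0$.
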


\begin{proof}
Suppose that the hypotheses of~(1) hold, and assume for contradiction
that there exists $I_0\in\F_0\setminus\F$. The assumptions on~$\F'$ allow us
to apply Zorn's Lemma to find $I\in\Max(\F')$ with $I\supseteq I_0$. Then
$I\in\F_0$ because $\F_0$ is a semifilter. The point annihilator hypothesis implies
that there is a nonzero element $a+I\in R/I$ such that $a^{-1}I=\ann(a+I)\in\F$.
On the other hand, $a+I\neq 0+I$ implies that $I+aR\supsetneq I$.
By maximality of $I$, this means that $I+aR\in\F$. Because $\F$ is a right Oka
family, we arrive at the contradiction $I\in\F$.

Parts~(2) and~(3) follows from~(1) by taking $\F_0$ to be, respectively, the
set of all right ideals of $R$ (properly) containing $J$ or the set of all right ideals of $R$.
\end{proof}

Notice that part~(1) above remains true if we weaken the condition on chains in
$\F'$ to the following: every nonempty chain in $\F'\cap\F_0$ has an upper bound in
$\F'$. The latter condition holds if every $I\in\F_0$ is such that $R/I$ is a noetherian
module, or more generally if $\F_0$ satisfies the ascending chain condition (as a partially
ordered set with respect to inclusion). However, we shall not make use this observation
in the present work.

The following is an illustration of how Theorem~\ref{first point annihilator set theorem}
can be applied in practice. It is well-known that every finitely
generated artinian module over a commutative ring has finite length.
However, there exist finitely generated (even cyclic) artinian right
modules over noncommutative rings that do not have finite length; for instance,
see~\cite[Ex.~4.28]{ExercisesClassical}. Here we provide a sufficient condition
for all finitely generated artinian right modules over a ring to have finite
length. 

\begin{proposition}
If all maximal right ideals of a ring $R$ are finitely generated, then
every finitely generated artinian right $R$-module has finite length.
\end{proposition}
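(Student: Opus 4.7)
My plan is to reduce to the cyclic case and then apply the Point Annihilator Set Theorem~\ref{first point annihilator set theorem}(3) to a suitably chosen right Oka family. For the reduction, if $M$ is a finitely generated artinian module with generators $x_1,\dots,x_n$, I would induct on $n$: the submodule $x_1R$ is cyclic and artinian, while $M/x_1R$ is artinian and generated by $n-1$ elements, so once the cyclic case is handled, $M$ is an extension of two finite length modules and therefore has finite length.

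For the cyclic case I consider the family
\[
\F = \{I_R \subseteq R : \text{if } R/I \text{ is artinian, then } R/I \text{ is noetherian}\}.
\]
A quick verification using Lemma~\ref{isomorphic cyclic modules} shows that $\F$ is a right Oka family: if $I+aR, a^{-1}I \in \F$ and $R/I$ is artinian, then in the short exact sequence $0\to R/a^{-1}I\to R/I\to R/(I+aR)\to 0$ the outer terms are artinian (as a submodule and a quotient of $R/I$), hence noetherian by hypothesis, which forces $R/I$ to be noetherian. Proving $\F$ equals the family of all right ideals of $R$ is equivalent to the desired conclusion for cyclic modules.

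For Theorem~\ref{first point annihilator set theorem}(3), the point annihilator hypothesis is immediate: if $I \in \Max(\F')$ then $R/I$ is artinian, so it contains a nonzero simple submodule $\bar aR \cong R/a^{-1}I$ for some $a \in R\setminus I$, and any simple module lies in $\F$ trivially. The main obstacle is the chain condition on $\F'$, and this is where the hypothesis on maximal right ideals finally enters. Given a chain $\{I_\alpha\}$ in $\F'$, i.e., with each $R/I_\alpha$ artinian of infinite length, I would take $I = \bigcup I_\alpha$ and show $I \in \F'$. The module $R/I$ is automatically artinian (as a quotient of any $R/I_\alpha$), so I need only check that it has infinite length. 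This will follow from the auxiliary claim: \emph{if every maximal right ideal of $R$ is finitely generated, then every right ideal $J$ with $R/J$ of finite length is itself finitely generated.} This is proved by induction on the length of $R/J$ along a composition series, using that each simple factor $R/\m$ is finitely presented (since $\m$ is f.g.) and that the class of finitely presented modules is closed under extensions. Granted the claim, if $R/I$ had finite length then $I$ would be finitely generated; but a finitely generated right ideal cannot strictly contain every term of an ascending union with that union, so $I = I_\beta$ for some $\beta$, contradicting $I_\beta \in \F'$. Thus $I \in \F'$ provides the required upper bound, and Theorem~\ref{first point annihilator set theorem}(3) completes the proof.
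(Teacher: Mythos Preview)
Your proof is correct and follows essentially the same approach as the paper: both reduce to cyclic modules, invoke the Point Annihilator Set Theorem using the simple submodule of a nonzero artinian module as the point annihilator, and rely on the same auxiliary claim (that $R/J$ of finite length forces $J$ to be finitely generated, via iterated extensions of finitely presented simples) to handle the chain condition. The only difference is packaging: the paper takes $\F=\{I:R/I\text{ has finite length}\}$ together with the semifilter $\F_0=\{I:R/I\text{ is artinian}\}$ and applies part~(1) of Theorem~\ref{first point annihilator set theorem}, which makes the chain condition immediate since that $\F$ consists of finitely generated right ideals; your conditional family $\F$ bundles $\F_0$ into the definition and uses part~(3), at the cost of a slightly less direct verification of the chain condition.
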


\begin{proof}
It suffices to show that every cyclic artinian right $R$-module has finite
length.
Let $\F_0$ be the semifilter of right ideals $I_R$ such that $R/I$ is right artinian,
and let $\F$ be the right Oka family of right ideals $I$ such that $R/I$ has finite
length. Our goal is then to show that $\F_0\subseteq\F$. Because every nonzero
cyclic artinian module has a simple submodule, we see that $\F$ is a point annihilator
set for the class $\{R/I : I\in\F_0\}\supseteq\{R/I : I\in\Max(\F')\cap\F_0\}$.
To apply Theorem~\ref{first point annihilator set theorem}(1) we will
show that every nonempty chain in $\F'$ has an upper bound in $\F'$. For this,
it is enough to check that $\F$ consists of finitely generated right ideals.
The hypothesis implies that all simple right $R$-modules are finitely presented.
If $I\in\F$ then $R/I$, being a repeated extension of finitely many simple
modules, is finitely presented. It follows that $I$ is finitely generated. (The
details of the argument that $\F$ consists of f.g.\ right ideals are in~\cite[Cor~4.9]{Reyes}.)
\end{proof}

In light of the result above, it would be interesting to find a characterization of
the rings $R$ over which every finitely generated artinian right $R$-module has
finite length.  How would such a characterization unite both commutative rings
and the rings in which every maximal right ideal is finitely generated?

\separate

For our purposes, it will often best to use a variant of the theorem above.
This variant keeps with the theme of Cohen's and Kaplansky's results
(Theorems~\ref{original Cohen's Theorem}--\ref{original Kap Cohen Theorem}) of
``testing'' a property on special sets of right ideals.

\begin{theorem}
\label{"PIP supplement"}
Let $\F$ be a right Oka family such that every nonempty chain of right ideals
in $\F^{\prime}$ (with respect to inclusion) has an upper bound in $\F^{\prime}$.
Let $\setS$ be a set of right ideals that is a point annihilator set for the
class of modules $\{ R/I:I_R\in \Max (\F')\}$.
\begin{itemize}
\item[\normalfont (1)] Let $\F_0$ be a divisible semifilter of right ideals in
$R$. If $\F_0\cap\setS\subseteq\F$, then $\F_0\subseteq\F$.
\item[\normalfont (2)] For any ideal $J\lhd R$, if all right ideals in $\setS$
that contain $J$ belong to $\F$, then every right ideal containing $J$ belongs
to $\F$.
\item[\normalfont (3)]If $\mathcal{S}\subseteq \F$, then all right ideals of $R$
belong to $\F$.
\end{itemize}
\end{theorem}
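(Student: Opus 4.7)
The plan is to deduce all three parts from the Point Annihilator Set Theorem~\ref{first point annihilator set theorem}. Both theorems have exactly the same structural conclusions; the only difference is that in Theorem~\ref{first point annihilator set theorem} the point-annihilator hypothesis is placed on $\F$ itself, whereas here it is placed on an auxiliary set $\setS$. So the single task is to upgrade the given hypothesis ``$\setS$ is a point annihilator set for $\{R/I : I \in \Max(\F')\}$'' to the stronger statement ``$\F$ itself is a point annihilator set for $\{R/I : I \in \Max(\F') \cap \F_0\}$,'' at which point Theorem~\ref{first point annihilator set theorem} applies verbatim.

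For part~(1), fix $I \in \Max(\F') \cap \F_0$. By hypothesis on $\setS$, some nonzero $a + I \in R/I$ has point annihilator $a^{-1}I \in \setS$. Interpreting ``divisible semifilter'' in the standard sense that $J \in \F_0$ and $x \in R$ force $x^{-1}J \in \F_0$, we get $a^{-1}I \in \F_0$, and so $a^{-1}I \in \F_0 \cap \setS \subseteq \F$. This exhibits $\F$ as a point annihilator set for $\{R/I : I \in \Max(\F') \cap \F_0\}$, and Theorem~\ref{first point annihilator set theorem}(1) then yields $\F_0 \subseteq \F$. For part~(2), I would take $\F_0 := \{I_R \subseteq R : I \supseteq J\}$; this is clearly a semifilter, and it is divisible because $J$ being two-sided gives $aJ \subseteq J \subseteq I$ for every $a \in R$, hence $J \subseteq a^{-1}I$ and $a^{-1}I \in \F_0$. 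The hypothesis of~(2) is then exactly $\F_0 \cap \setS \subseteq \F$, and~(1) applies. Part~(3) is either the special case $J = 0$ of~(2) or, equivalently, the case of~(1) in which $\F_0$ is the (trivially divisible) semifilter of all right ideals of $R$.

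The only delicate point is the verification of divisibility in~(2), which silently relies on $J$ being two-sided rather than merely a right ideal; indeed, without two-sidedness there is no reason for $a^{-1}I$ to contain $J$, and the reduction to~(1) would break down. This is precisely what forces the hypothesis $J \lhd R$ in the statement of~(2). Everything else is routine bookkeeping with the definitions and a direct invocation of Theorem~\ref{first point annihilator set theorem}.
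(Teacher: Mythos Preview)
Your proof is correct and follows essentially the same approach as the paper: reduce everything to Theorem~\ref{first point annihilator set theorem}(1) by showing that $\F$ itself is a point annihilator set for $\{R/I : I \in \Max(\F')\cap\F_0\}$, using divisibility of $\F_0$ to pass from $a^{-1}I \in \setS$ to $a^{-1}I \in \F_0 \cap \setS \subseteq \F$. The paper simply asserts that~(2) and~(3) are special cases of~(1), whereas you spell out the verification that $\{I_R : I \supseteq J\}$ is a divisible semifilter (correctly noting that two-sidedness of $J$ is needed here); this is a welcome bit of extra detail but not a different argument.
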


\begin{proof}
As in the previous result, parts~(2) and~(3) are special cases of part~(1). To
prove~(1), Theorem~\ref{first point annihilator set theorem} implies that it is
enough to show that $\F$ is a point annihilator set for the class of modules
$\{R/I:I_R\in\Max(\F')\cap\F_0\}$. Fixing such $R/I$, the hypothesis of part~(1)
ensures that $R/I$ has a point annihilator in $\setS$, say $A=\ann(x+I)\in\setS$
for some $x+I\in R/I\setminus\{0+I\}$. Because $I\in\F_0$ and $\F_0$ is divisible,
the fact that $A=x^{-1}I$ implies that $A\in\F_0$. Thus $A\in\setS\cap\F_0\subseteq\F$,
providing a point annihilator of $R/I$ that lies in $\F$.
\end{proof}

We also record a version of Theorem~\ref{"PIP supplement"} adapted
especially for families of finitely generated right ideals. Because of its easier
formulation, it will allow for simpler proofs as we provide applications of
Theorem~\ref{"PIP supplement"}.

\begin{corollary}
\label{f.g. "PIP supplement"}
Let $\F$ be a right Oka family in a ring $R$ that consists of finitely generated right
ideals. 	Let $\setS$ be a noetherian point annihilator set for $R$. Then the
following are equivalent:
\begin{itemize}
\item[\normalfont (1)] $\F$ consists of all right ideals of $R$;
\item[\normalfont (2)] $\F$ is a noetherian point annihilator set;
\item[\normalfont (3)] $\setS\subseteq\F$.
\end{itemize}
\end{corollary}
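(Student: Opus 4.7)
The plan is to prove the cyclic chain of implications $(1) \Rightarrow (3) \Rightarrow (2) \Rightarrow (1)$. The implication $(1) \Rightarrow (3)$ is immediate, since if $\F$ contains every right ideal then trivially $\setS \subseteq \F$. The implication $(3) \Rightarrow (2)$ follows from Remark~\ref{larger point annihilator sets}: any superset of a noetherian point annihilator set is itself a noetherian point annihilator set, so $\setS \subseteq \F$ together with the hypothesis on $\setS$ forces $\F$ to be a noetherian point annihilator set. The real content lies in $(2) \Rightarrow (1)$, which I would derive from part~(3) of the Point Annihilator Set Theorem~\ref{first point annihilator set theorem}.

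To invoke that theorem I must verify two conditions: (a)~every nonempty chain of right ideals in $\F'$ has an upper bound in $\F'$, and (b)~$\F$ is a point annihilator set for the class $\{R/I : I \in \Max(\F')\}$. For~(a), given a chain $\{I_\alpha\}$ in $\F'$, consider $I := \bigcup I_\alpha$. If $I$ is finitely generated then all of its generators lie in some common $I_\alpha$, forcing $I = I_\alpha \in \F'$; if $I$ is not finitely generated, then $I \notin \F$ by the standing hypothesis on $\F$, so again $I \in \F'$.

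For~(b), fix $I \in \Max(\F')$. I would first argue that $R/I$ is noetherian: any nonzero submodule of $R/I$ has the form $J/I$ for some right ideal $J \supsetneq I$, and the maximality of $I$ in $\F'$ forces $J \in \F$, hence $J$ is finitely generated by hypothesis. So every submodule of $R/I$ is finitely generated, which means $R/I$ is noetherian. Hypothesis~(2) then supplies a point annihilator of $R/I$ that lies in $\F$, establishing~(b). With both conditions verified, Theorem~\ref{first point annihilator set theorem}(3) yields~(1).

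The central observation driving the proof is that the finite-generation hypothesis on $\F$ does double duty: it closes $\F'$ under unions of chains, and it forces the quotients $R/I$ attached to $I \in \Max(\F')$ to be noetherian modules. Neither fact is individually deep, but together they are precisely what is required to reduce this corollary to the Point Annihilator Set Theorem; I do not anticipate a significant obstacle beyond correctly lining up these two consequences of finite generation.
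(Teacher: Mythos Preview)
Your proof is correct and follows essentially the same approach as the paper: the key observation that $R/I$ is noetherian for $I \in \Max(\F')$, combined with the Point Annihilator Set Theorem, is exactly what the paper uses. The only cosmetic differences are that the paper organizes the argument as two biconditionals $(1)\iff(2)$ and $(1)\iff(3)$ (invoking Theorems~\ref{first point annihilator set theorem}(3) and~\ref{"PIP supplement"}(3) respectively) rather than your cyclic chain, and the paper leaves the chain-condition verification implicit whereas you spell it out.
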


\begin{proof}
Given any $I\in\Max(\F')$, any nonzero submodule of $R/I$ is the image of
a right ideal properly containing $I$, which must be finitely generated;
thus $R/I$ is a noetherian right $R$-module. Stated another way, the class
$\{R/I : I\in\Max(\F')\}$ consists of noetherian modules. Thus (1)$\iff$(2)
 follows from Theorem~\ref{first point annihilator set theorem}(3) and
(1)$\iff$(3) follows from Theorem~\ref{"PIP supplement"}(3).
\end{proof}

\separate

As our first application of the simplified corollary above, we will
finally present our noncommutative generalization of Cohen's
Theorem~\ref{original Cohen's Theorem}, improving upon~\cite[Thm.~3.8]{Reyes}.

\begin{theorem}[A noncommutative Cohen's Theorem]
\label{Cohen's Theorem}
Let $R$ be a ring with a right noetherian point annihilator set $\setS$.
The following are equivalent:
\begin{itemize}
\item[\normalfont (1)] $R$ is right noetherian;
\item[\normalfont (2)] Every right ideal in $\setS$ is finitely generated;
\item[\normalfont (3)] Every nonzero noetherian right $R$-module has a finitely generated point annihilator;
\item[\normalfont (4)] Every nonzero noetherian right $R$-module has a nonzero cyclic finitely presented
submodule.
\end{itemize}
In particular, $R$ is right noetherian iff every cocritical right ideal
is finitely generated.
\end{theorem}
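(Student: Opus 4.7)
My plan is to let $\F$ denote the family of finitely generated right ideals of $R$, which by Proposition~3.7 of~\cite{Reyes} is a right Oka family and which by definition consists of finitely generated right ideals. The entire theorem will then fall out of Corollary~\ref{f.g. "PIP supplement"} applied to this $\F$ together with the given noetherian point annihilator set $\setS$. The key observation is that conditions (1), (2), (3) of the theorem correspond exactly to the three conditions appearing in that corollary, with condition (4) being a cosmetic reformulation of (3).

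For (1)$\iff$(2): condition (1) of the theorem says $\F$ consists of all right ideals of $R$, and condition (2) says $\setS\subseteq\F$, so this is precisely the equivalence (1)$\iff$(3) of Corollary~\ref{f.g. "PIP supplement"}. For (1)$\iff$(3): condition (3) of the theorem asserts exactly that $\F$ itself is a right noetherian point annihilator set for $R$, so this is the equivalence (1)$\iff$(2) of the same corollary. The remaining equivalence (3)$\iff$(4) is a routine unwinding of definitions: any nonzero cyclic submodule of a module $M_R$ has the form $mR\cong R/\ann(m)$ for some $0\neq m\in M$, and such a quotient $R/I$ is finitely presented precisely when $I$ is finitely generated, so (3) and (4) assert the same thing.

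Finally, for the ``in particular'' clause, I would apply the equivalence (1)$\iff$(2) just established, taking $\setS$ to be the set of cocritical right ideals of $R$, which is a right noetherian point annihilator set by Proposition~\ref{cocritical NPA}. No step of this argument presents a real obstacle; the substantive work was already done in establishing the theory of right Oka families and in verifying that the family of finitely generated right ideals is one. The only thing that requires any care is recognizing conditions (3) and (4) as disguised forms of the hypothesis ``$\F$ is a noetherian point annihilator set,'' after which the theorem is a formal consequence of Corollary~\ref{f.g. "PIP supplement"}.
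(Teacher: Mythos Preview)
Your proposal is correct and follows essentially the same approach as the paper: let $\F$ be the right Oka family of finitely generated right ideals, apply Corollary~\ref{f.g. "PIP supplement"} to get the equivalence of (1), (2), and (3), observe that (3)$\iff$(4) by unwinding the definitions (cyclic submodules are $mR\cong R/\ann(m)$, and $R/I$ is finitely presented iff $I$ is finitely generated), and deduce the final statement from Proposition~\ref{cocritical NPA}. The paper's own proof is a terse paragraph making exactly these moves.
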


\begin{proof}
The family of finitely generated right ideals is a right Oka family by~\cite[Prop.~3.7]{Reyes}.
The equivalence of~(1), (2), and~(3) thus follows directly from
Corollary~\ref{f.g. "PIP supplement"}. Also, (3)$\iff$(4) comes from the
observation that a right ideal $I$ is a point annihilator of a module $M_R$ iff
there is an injective module homomorphism $R/I\hookrightarrow M$, as well as the fact
that $R/I$ is a finitely presented module iff $I$ is a finitely generated right
ideal~\cite[(4.26)(b)]{Lectures}. The last statement follows from Proposition~\ref{cocritical NPA}.
\end{proof}

In particular, if we take the set $\setS$ above to be the completely prime
right ideals of $R$, we recover~\cite[Thm.~3.8]{Reyes}. Our version of Cohen's
Theorem will be compared and contrasted with earlier such generalizations
in~\S\ref{previous generalizations section}.

The result above suggests that one might wish to drop the word ``cyclic'' in
characterization~(4). This is indeed possible. We present this as a separate result
since it does not take advantage of the ``formalized proof method'' given in
Theorem~\ref{first point annihilator set theorem}. 
However, this result does follow the informal ``strategy of proof'' outlined at the
beginning of this section.

\begin{proposition}
For a ring $R$, the following are equivalent:
\begin{itemize}
\item[\normalfont (1)] $R$ is right noetherian;
\item[\normalfont (5)] Every nonzero noetherian right $R$-module has a nonzero finitely
presented submodule.
\end{itemize}
\end{proposition}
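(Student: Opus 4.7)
The plan is to prove the two implications separately.  For (1)$\implies$(5), given a right noetherian ring $R$ and a nonzero noetherian right $R$-module $M$, any cyclic submodule $mR \cong R/\ann(m)$ with $0 \ne m \in M$ suffices: $\ann(m)$ is finitely generated because $R_R$ is noetherian, so $R/\ann(m)$ is finitely presented.

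For (5)$\implies$(1), I would follow the informal ``strategy of proof'' sketched just before Theorem~\ref{first point annihilator set theorem}, noting that neither Theorem~\ref{first point annihilator set theorem} nor Corollary~\ref{f.g. "PIP supplement"} applies directly since hypothesis~(5) concerns arbitrary (not necessarily cyclic) submodules.  Let $\F$ denote the right Oka family of finitely generated right ideals of $R$.  Assuming $R$ is not right noetherian, $\F' \ne \emptyset$, and any chain in $\F'$ has its union again in $\F'$: a finite set of generators for the union would all sit in a single member of the chain, making that member finitely generated, a contradiction.  By Zorn's Lemma I choose $I \in \Max(\F')$; since $R \in \F$ we have $I \ne R$, so $R/I \ne 0$.

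The crux is that $R/I$ is noetherian.  Every submodule of $R/I$ has the form $J/I$ for some right ideal $J \supseteq I$, and when $J \supsetneq I$ the maximality of $I$ in $\F'$ forces $J \in \F$, so $J/I$ is finitely generated.  Applying~(5) to $R/I$ yields a nonzero finitely presented submodule $N \subseteq R/I$.  Writing $N = J/I$ with $J \supsetneq I$, maximality of $I$ again gives that $J$ is finitely generated.  From the short exact sequence
\[
0 \to I \to J \to J/I \to 0,
\]
in which the middle term is finitely generated and the right-hand term is finitely presented, the standard fact~\cite[(4.26)(b)]{Lectures} forces the kernel $I$ to be finitely generated, contradicting $I \in \F'$.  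The main obstacle is simply recognizing that maximality of $I$ in $\F'$ does double duty: it ensures $R/I$ is noetherian so that~(5) applies, and it ensures the middle term of the resulting exact sequence is finitely generated so that the classical f.g./f.p.\ exact sequence lemma yields the contradiction.
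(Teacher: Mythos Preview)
Your proof is correct and follows essentially the same approach as the paper: both pass via Zorn's Lemma to a maximal non-finitely-generated right ideal $I$, observe that $R/I$ is noetherian, and apply~(5) to obtain a finitely presented submodule $J/I$ with $J$ finitely generated. The only minor difference is in the final step: the paper shows $R/I$ is finitely presented (as an extension of the finitely presented modules $J/I$ and $R/J$) and then deduces $I$ is finitely generated, whereas you apply the same standard fact directly to the exact sequence $0 \to I \to J \to J/I \to 0$, which is arguably a bit more direct.
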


\begin{proof}
Using the numbering from Theorem~\ref{Cohen's Theorem}, we have
(1)$\implies$(4)$\implies$(5). Suppose that~(5) holds, and assume for contradiction
that there exists a right ideal of $R$ that is not finitely generated. Using Zorn's Lemma,
pass to $I_R\subseteq R$ that is maximal with respect to not being finitely generated.
Then because every right ideal properly containing $I$ is f.g., the module $R/I$ is 
noetherian. By hypothesis, there is a finitely presented submodule $0\neq J/I\subseteq R/I$.
Then $J\supsetneq I$ implies that $J$ is finitely generated, so that $R/J$ is finitely
presented. Because $R/I$ is an extension of the two finitely presented modules $J/I$
and $R/J$, $R/I$ is finitely presented~\cite[Ex.~4.8(2)]{ExercisesModules}. But if $R/I$
is finitely presented then $I_R$ is finitely generated~\cite[(4.26)(b)]{Lectures}. This
is a contradiction.
\end{proof}

\separate

The Akizuki-Cohen Theorem of commutative algebra (cf.~\cite[pp.~27--28]{Cohen})
states that a commutative ring $R$ is artinian iff it is noetherian and every
prime ideal is maximal.
Recall that a module $M_R$ is \emph{finitely cogenerated} if any family of
submodules of $M$ whose intersection is zero has a finite subfamily whose
intersection is zero. In~\cite[(5.17)]{LR} consideration of the class of finitely
cogenerated right modules led to the following ``artinian version'' of
Cohen's theorem: a commutative ring $R$ is artinian iff for all $P\in\Spec(R)$,
$P$ is finitely generated and $R/P$ is finitely cogenerated. Here we generalize
both of these results to the noncommutative setting.

\begin{proposition}
For a ring $R$ with right noetherian point annihilator set $\setS$, the
following are equivalent:
\begin{itemize}
\item[\normalfont (1)] $R$ is right artinian;
\item[\normalfont (2)] $R$ is right noetherian and for all $P\in\setS$, $(R/P)_R$
has finite length;
\item[\normalfont (3)] For all $P\in\setS$, $P_R$ is finitely generated and $(R/P)_R$
has finite length;
\item[\normalfont (4)] For all $P\in\setS$, $P_R$ is finitely generated and $(R/P)_R$
is finitely cogenerated;
\item[\normalfont (5)] $R$ is right noetherian and every cocritical right ideal of $R$
is maximal;
\item[\normalfont (6)] Every cocritical right ideal of $R$ is finitely generated and maximal.
\end{itemize}
\end{proposition}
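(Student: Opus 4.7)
The plan is to prove the six equivalences via the cycle $(1)\Rightarrow(2)\Rightarrow(3)\Rightarrow(4)\Rightarrow(6)\Rightarrow(1)$ together with the auxiliary implications $(1)\Rightarrow(5)\Rightarrow(6)$. The routine step $(1)\Rightarrow(2)$ would invoke Hopkins--Levitzki (right artinian rings are right noetherian), after which every $R/P$ is both artinian and noetherian, hence of finite length. Then $(2)\Rightarrow(3)$ is immediate (right noetherianness upgrades every $P_R$ to being finitely generated), and $(3)\Rightarrow(4)$ is trivial since finite-length modules are finitely cogenerated.

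For the cocritical-right-ideal conditions, to see $(1)\Rightarrow(5)$ I would use that right artinian is equivalent to $\rKdim(R)=0$, so any cocritical $I$ has $\Kdim(R/I)=0$, forcing $R/I$ to be $0$-critical hence simple and $I$ maximal; the implication $(5)\Rightarrow(6)$ is immediate from right noetherianness. For $(6)\Rightarrow(1)$, I would first note that the set of cocritical right ideals is a noetherian point annihilator set (Proposition~\ref{cocritical NPA}) consisting of finitely generated right ideals by~(6), so the noncommutative Cohen's Theorem~\ref{Cohen's Theorem} yields that $R$ is right noetherian. Next I would consider the right Oka family $\F = \{I_R \subseteq R : R/I \text{ has finite length}\}$, whose elements are automatically finitely generated because $R$ is right noetherian. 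Hypothesis~(6) makes every cocritical $P$ maximal, so $R/P$ is simple and of finite length, placing the set of cocritical right ideals inside $\F$; Corollary~\ref{f.g. "PIP supplement"} then delivers that $\F$ consists of all right ideals, so $R_R$ itself has finite length and $R$ is right artinian.

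The crux is the bridge $(4)\Rightarrow(6)$, which links the abstract point annihilator set $\setS$ to the intrinsic family of cocritical right ideals. Under~(4), Cohen's Theorem again gives right noetherianness of $R$, so cocritical right ideals are automatically finitely generated; the task reduces to showing each cocritical $I$ is maximal. I would fix such an $I$, noting that $R/I$ is critical of some Krull dimension $\alpha$ and is a nonzero noetherian right $R$-module. Because $\setS$ is a noetherian point annihilator set, some $P\in\setS$ provides an embedding $R/P\hookrightarrow R/I$; by Remarks~\ref{cocritical is comonoform}(1), the submodule $R/P$ inherits criticality with the same Krull dimension $\alpha$. Now hypothesis~(4) says $R/P$ is finitely cogenerated, so $\soc(R/P)$ is essential and nonzero and $R/P$ contains a simple submodule. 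That simple submodule has Krull dimension $0$, and criticality of $R/P$ then forces $\alpha = 0$, so $R/P$ is itself simple (and $P$ is maximal); applying the same criticality argument to the simple submodule $R/P \hookrightarrow R/I$ shows $R/I$ is simple and $I$ is maximal. The main obstacle I anticipate is recognizing that the compact implication ``critical plus finitely cogenerated implies simple'' is precisely what extracts maximality from~(4); this sidesteps the strictly stronger and more technical fact that a noetherian finitely cogenerated module has finite length, which one might otherwise be tempted to deploy.
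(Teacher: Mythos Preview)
Your proof is correct, but it takes a somewhat different route from the paper's.

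The paper organizes the proof as three separate blocks of equivalences linked through~(1). For $(1)\Leftrightarrow(2)\Leftrightarrow(3)$ it applies Corollary~\ref{f.g. "PIP supplement"} and Theorem~\ref{Cohen's Theorem} to the right Oka family $\F=\{I : R/I \text{ has finite length}\}$. For $(1)\Leftrightarrow(4)$ it applies the same machinery to the \emph{different} right Oka family $\F=\{I : R/I \text{ is finitely cogenerated}\}$, together with the characterization that $R_R$ is artinian iff every cyclic right module is finitely cogenerated. Finally, $(1)\Leftrightarrow(5)\Leftrightarrow(6)$ is obtained by specializing the already-proved $(1)\Leftrightarrow(2)\Leftrightarrow(3)$ to the particular noetherian point annihilator set of cocritical right ideals, using that a critical module of finite length is simple.

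Your argument instead runs a single cycle, with the bridge $(4)\Rightarrow(6)$ as the novel step: you embed $R/P$ (for some $P\in\setS$) into a given critical $R/I$, use the finitely cogenerated hypothesis to force a simple submodule, and conclude $\alpha=0$ via the inheritance of criticality by nonzero submodules. This is a clean, self-contained argument that entirely avoids invoking the finitely cogenerated Oka family (whose Oka property the paper imports from~\cite{Reyes}). The trade-off is that the paper's approach is more uniform---three parallel applications of Corollary~\ref{f.g. "PIP supplement"}---while yours is more elementary at the cost of a slightly longer hands-on computation in the $(4)\Rightarrow(6)$ step.
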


\begin{proof}
(1)$\iff$(2)$\iff$(3): It is well-known that $R$ is right artinian iff
$R_R$ has finite length. This equivalence then follows from
Corollary~\ref{f.g. "PIP supplement"}, Theorem~\ref{Cohen's Theorem}, and
the fact that $\F:=\{I_R\subseteq R : R/I_R\text{ has finite length}\}$ is a right Oka
family (see~\cite[Ex.~5.18(4)]{Reyes}).

(1)$\iff$(4): It is known that a module $M_R$ is artinian iff
every quotient of $M$ is finitely cogenerated (see~\cite[Ex.~19.0]{ExercisesModules}).
Because $\F:=\{I_R\subseteq R : R/I_R\text{ is finitely cogenerated}\}$ is a
right Oka family (see~\cite[Ex.~5.18(1B)]{Reyes}), (1)$\iff$(4) follows from
Corollary~\ref{f.g. "PIP supplement"}.

We get (1)$\iff$(5)$\iff$(6) by applying the equivalence of~(1),~(2),
and~(3) to the case where $\setS$ is the set of cocritical right ideals
of $R$, noting that every artinian critical module is necessarily simple.
\end{proof}

Of course, the fact that a right noetherian ring is right artinian iff all
of its cocritical right ideals are maximal follows from a direct argument
involving Krull dimensions of modules. Indeed, given a right noetherian ring
$R$ with right Krull dimension $\alpha$, choose a right ideal $I\subseteq R$
maximal with respect to $\Kdim(R/I)=\alpha$. Then for any right ideal
$J\supseteq I$, $\Kdim(R/J)<\alpha=\Kdim(R/I)$; hence $I$ is cocritical. So
\[
\rKdim(R) = \sup \{ \Kdim(R/I) : I_R \subseteq R \text{ is cocritical} \}.
\]
The result now follows once we recall that the 0-critical modules are precisely
the simple modules.

We also mention another noncommutative generalization of the Akizuki-Cohen
Theorem due to A.~Kert\'esz, which states that a ring $R$ is right artinian iff it is right
noetherian and for every prime ideal $P \lhd R$, $R/P$ is right artinian~\cite{Kertesz}.
(We thank the referee for bringing this reference to our attention.)

Another application of Theorem~\ref{Cohen's Theorem} tells us when a right
semi-artinian ring, especially a \emph{left} artinian ring, is right
artinian. (The definition of a right semi-artinian ring was recalled in
Example~\ref{semi-artinian example}.)

\begin{proposition}
\label{semi-artinian right artinian}
\textnormal{(1)} A right semi-artinian ring $R$ is right artinian iff every maximal
right ideal of $R$ is finitely generated.

\textnormal{(2)} Let $R$ be a left perfect ring (e.g.\ a semiprimary ring, such as a
left artinian ring) and let $\m_1,\dots,\m_n$ be maximal right ideals
such that $R/\m_i$ exhaust all isomorphism classes of simple right modules.
Then $R$ is right artinian iff all of the $\m_i$ are finitely generated.
\end{proposition}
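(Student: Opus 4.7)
My plan is to derive both parts as immediate applications of the preceding Akizuki–Cohen style proposition (the characterization of right artinian rings in terms of a right noetherian point annihilator set), choosing in each case the \emph{right} NPA set that the hypothesis makes available.

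For part~(1), the plan is to apply the previous proposition with $\setS$ equal to the set of all maximal right ideals of $R$. This is legitimate precisely because of Example~\ref{semi-artinian example}: when $R$ is right semi-artinian, every nonzero right $R$-module has nonzero socle, so the set of maximal right ideals is a right noetherian point annihilator set for $R$. Under this choice of $\setS$, every module of the form $R/\m$ with $\m \in \setS$ is simple and hence automatically has finite length and is finitely cogenerated. Consequently, condition~(3) (or~(4)) of the preceding proposition collapses to the single statement that every maximal right ideal of $R$ is finitely generated. The equivalence with right artinianness is then exactly the content of (1)$\iff$(3).

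For part~(2), the strategy is the same but with a finite NPA set. Example~\ref{left perfect example} shows that, over a left perfect ring $R$, any finite family $\setS = \{\m_1,\dots,\m_n\}$ of maximal right ideals for which the $R/\m_i$ represent all isomorphism classes of simple right $R$-modules is a right noetherian point annihilator set. Feeding this $\setS$ into condition~(3) of the preceding proposition, the requirement ``$(R/\m_i)_R$ has finite length'' is automatic (each $R/\m_i$ is simple), so the only remaining requirement is that each $\m_i$ be finitely generated, which yields the desired equivalence.

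The proof is thus genuinely two applications of the same equivalence with different choices of $\setS$; there is no real obstacle because the point annihilator set theorem has done the structural work already. The only place where one must be slightly careful is the verification that the chosen $\setS$ really is a noetherian point annihilator set, but this is precisely what Examples~\ref{semi-artinian example} and~\ref{left perfect example} have established, and the simplicity of the factors $R/\m_i$ trivializes the auxiliary conditions appearing in the preceding proposition.
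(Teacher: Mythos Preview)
Your argument is correct. Both parts follow cleanly from the preceding Akizuki--Cohen style proposition once you feed in the appropriate noetherian point annihilator set $\setS$ of maximal right ideals (all of them in~(1), the finite representative set $\{\m_1,\dots,\m_n\}$ in~(2)); since $R/\m$ is simple for $\m\in\setS$, the finite-length and finite-cogeneration conditions in~(3)--(4) of that proposition are automatic, leaving only the finite generation of each $\m$.

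The paper takes a slightly different, more elementary route. Rather than invoking the Akizuki--Cohen proposition, it first observes directly that a right semi-artinian ring is right artinian iff it is right noetherian (since every nonzero right module has nonzero socle, a noetherian right module is forced to have finite length). With this in hand, both parts reduce to testing the right \emph{noetherian} condition, and Theorem~\ref{Cohen's Theorem} applies with the same choices of $\setS$ from Examples~\ref{semi-artinian example} and~\ref{left perfect example}. Your approach uses the stronger packaged result (which itself was proved via Cohen's Theorem), so the underlying machinery is the same; the paper just unwinds one layer and appeals to the simpler ``semi-artinian $+$ noetherian $=$ artinian'' fact rather than the full Akizuki--Cohen equivalence.
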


\begin{proof}
It is easy to check that a right semi-artinian ring $R$ is right artinian
iff it is right noetherian. The proposition then follows from Theorem~\ref{Cohen's Theorem}
and Examples~\ref{semi-artinian example}--\ref{left perfect example}.
\end{proof}

A result of B.~Osofsky~\cite[Lem.~11]{Osofsky} states that a left or right perfect ring
$R$ with Jacobson radical $J$ is right artinian iff $J/J^2$ is finitely generated as a
right $R$-module. This applies, in particular, to left artinian rings.
D.\,V.~Huynh characterized which (possibly nonunital) left artinian rings are right artinian
in~\cite[Thm.~1]{Huynh}. In the unital case, his characterization recovers Osofsky's
result above for the special class of left artinian rings.
We can use our previous result to recover a weaker version of Osofsky's theorem
that implies Huynh's result for unital left artinian rings.

\begin{corollary}
\label{Huynh's theorem}
Let $R$ be a ring with $J:=\rad(R)$. The the following are
equivalent:
\begin{itemize}
\item[\normalfont (1)] $R$ is right artinian;
\item[\normalfont (2)] $R$ is left perfect and $J$ is a finitely generated
right ideal;
\item[\normalfont (3)] $R$ is perfect and $J/J^2$ is a finitely generated
right $R$-module.
\end{itemize}
In particular, if $R$ is semiprimary (for instance, if it is left artinian),
then $R$ is right artinian iff $J/J^2$ is finitely generated on the right.
\end{corollary}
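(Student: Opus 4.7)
The plan is to prove $(1)\Leftrightarrow(2)$ via Proposition~\ref{semi-artinian right artinian}(2), and then bring $(3)$ into the equivalence by showing $(1)\Rightarrow(3)\Rightarrow(2)$. The implication $(1)\Rightarrow(2)$ is immediate: right artinian implies right noetherian (so $J$ is finitely generated as a right ideal) and semiprimary (hence left perfect). For $(2)\Rightarrow(1)$, recall that a left perfect ring has $R/J$ semisimple, so for every maximal right ideal $\m\subseteq R$ the quotient $\m/J$ is finitely generated as a right $R/J$-module. Combining this with the hypothesis that $J$ is a finitely generated right ideal, one concludes that every maximal right ideal of $R$ is finitely generated, and Proposition~\ref{semi-artinian right artinian}(2) then delivers that $R$ is right artinian.

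The implication $(1)\Rightarrow(3)$ is straightforward: right artinian rings are semiprimary, hence perfect on both sides, and $J$ finitely generated obviously gives $J/J^2$ finitely generated. The crux is $(3)\Rightarrow(2)$, which I would establish by a Nakayama-type argument. Lift a finite right-module generating set of $J/J^2$ to elements $x_1,\dots,x_n\in J$ and set $K:=x_1R+\cdots+x_nR$; by construction $J=K+J^2$. The right $R$-module $M:=J/K$ then satisfies
\[
MJ \;=\; (J\cdot J + K)/K \;=\; (J^2+K)/K \;=\; J/K \;=\; M.
\]
Because $R$ is right perfect, $J$ is right $T$-nilpotent, and by the right-module version of Nakayama's lemma provided by Bass's theorem~\cite[(23.20)]{FC}, every nonzero right $R$-module $N$ satisfies $NJ\subsetneq N$. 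Hence $M=0$, i.e.\ $J=K$ is a finitely generated right ideal. Since perfect rings are in particular left perfect, this yields $(2)$.

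The main obstacle is precisely the step $(3)\Rightarrow(2)$: one must correctly invoke the right-module Nakayama-type consequence of right perfectness in order to promote finite generation of $J/J^2$ to finite generation of $J$ itself; the two-sidedness of the ``perfect'' hypothesis in $(3)$ is essential here, whereas in $(2)$ only left perfectness is used. The final ``in particular'' clause is then immediate: a semiprimary ring has $J^n=0$ for some $n$ and $R/J$ semisimple, so it is automatically (left and right) perfect, and the equivalence $(1)\Leftrightarrow(3)$ specializes to the stated claim about $J/J^2$.
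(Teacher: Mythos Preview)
Your proof is correct and follows essentially the same approach as the paper: the paper proves the cycle $(1)\Rightarrow(3)\Rightarrow(2)\Rightarrow(1)$, while you prove $(1)\Leftrightarrow(2)$ first and then $(1)\Rightarrow(3)\Rightarrow(2)$, but the substantive arguments for each implication (Proposition~\ref{semi-artinian right artinian}(2) for $(2)\Rightarrow(1)$, and the $T$-nilpotent Nakayama argument for $(3)\Rightarrow(2)$) are identical. One minor point: for the Nakayama step the paper cites~\cite[(23.16)]{FC} (Nakayama's Lemma for right $T$-nilpotent ideals) rather than~\cite[(23.20)]{FC}.
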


\begin{proof}
Because any right artinian ring is both perfect and right noetherian, we
have (1)$\implies$(3). For (3)$\implies$(2), suppose that $R$ is perfect
and that $J/J^2$ is right finitely generated. Then for some finitely
generated submodule $M_R\subseteq J_R$, $J=M+J^2$. Since $R$ is right
perfect, $J$ is right T-nilpotent. Then by ``Nakayama's Lemma'' for right
T-nilpotent ideals (see~\cite[(23.16)]{FC}) implies that $J_R=M_R$ is
finitely generated.

Finally we show (2)$\implies$(1). Suppose that $R$ is left perfect and
that $J_R$ is finitely generated. For any maximal right ideal $\m$ of $R$,
we have $J\subseteq\m$. Now $\m/J$ is a right ideal of the semisimple
ring $R/J$ and is therefore finitely generated. Because $J_R$ is also finitely
generated, we see that $\m_R$ itself is finitely generated. Since this
is true for all maximal right ideals of $R$, Proposition~\ref{semi-artinian right artinian}(2)
implies that $R$ is right artinian.
\end{proof}

\separate

Next we give a condition for every finitely generated right module over a ring $R$
to have a finite free resolution (FFR). Notice that such a ring is necessarily right
noetherian. Indeed, any module with an FFR is necessarily finitely
presented. Thus if every f.g.\ right $R$-module has an FFR, then for every right
ideal $I\subseteq R$ the module $R/I$ must have an FFR and therefore must be
finitely presented. It follows (from Schanuel's Lemma~\cite[(5.1)]{Lectures}) that
$I_R$ is finitely generated, and $R$ is right noetherian.

\begin{proposition}
Let $\setS$ be a right noetherian point annihilator set for a ring $R$ (e.g.\ the set of
cocritical right ideals). Then the following are equivalent.
\begin{itemize}
\item[\normalfont (1)] Every finitely generated right $R$-module has a finite free resolution;
\item[\normalfont (2)] For all $P\in\setS$, $R/P$ has a finite free resolution;
\item[\normalfont (3)] Every right ideal in $\setS$ has a finite free resolution.
\end{itemize}
\end{proposition}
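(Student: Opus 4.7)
The plan is to apply the finitely generated variant of the Point Annihilator Set machinery (Corollary~\ref{f.g. "PIP supplement"}) to the family
\[
\F = \{I_R \subseteq R : R/I \text{ has a finite free resolution}\}.
\]
First I would verify that $\F$ is a right Oka family consisting of finitely generated right ideals. By Theorem~\ref{cyclic module correspondence}, it suffices to check that $\C_\F = \{R/I : I \in \F\}$ is closed under extensions of cyclic modules: given such a short exact sequence $0 \to R/I \to M \to R/J \to 0$ with $I, J \in \F$, the horseshoe lemma splices FFRs of the outer terms into an FFR of $M$. Moreover, any module with an FFR is finitely presented, so each $I \in \F$ is f.g.\ by~\cite[(4.26)(b)]{Lectures}.

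With $\F$ in place, Corollary~\ref{f.g. "PIP supplement"} immediately yields that condition~(2), which asserts $\setS \subseteq \F$, is equivalent to $\F$ containing every right ideal of $R$, i.e., to every cyclic right $R$-module admitting an FFR. To upgrade from cyclic to finitely generated modules (condition~(1)), I would induct on the number of generators: an $n$-generated module $M$ with a distinguished generator $m_1$ fits in $0 \to m_1 R \to M \to M/m_1 R \to 0$ with $m_1 R$ cyclic and $M/m_1 R$ generated by $n-1$ elements, and the horseshoe lemma again extends the two outer FFRs to one for $M$. Since (1) trivially implies (2), this settles (1)$\iff$(2).

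For (2)$\iff$(3) I would use the standard syzygy shift through the presentation $0 \to P \to R \to R/P \to 0$. If $P$ has an FFR $0 \to F_n \to \cdots \to F_0 \to P \to 0$, concatenating with this sequence produces an FFR of $R/P$. Conversely, given an FFR $0 \to F_n \to \cdots \to F_0 \to R/P \to 0$, Schanuel's Lemma applied to this presentation and to $0 \to P \to R \to R/P \to 0$ gives $K \oplus R \cong P \oplus F_0$, where $K = \ker(F_0 \to R/P)$ carries an FFR of length $n-1$. Hence $P \oplus F_0$ has an FFR, and the split sequence $0 \to P \to P \oplus F_0 \to F_0 \to 0$ combined with a 2-out-of-3 argument for FFRs produces the desired FFR of $P$.

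The main obstacle is this last step, since the FFR property need not pass to arbitrary direct summands; the Schanuel calculation has to be combined carefully with the 2-out-of-3 behavior of FFRs in short exact sequences. Everything else reduces mechanically to the horseshoe lemma and the Oka-family machinery already developed in the paper, so once the FFR syzygy shift is in hand the equivalences all fall out of Corollary~\ref{f.g. "PIP supplement"}.
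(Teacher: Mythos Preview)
Your argument is correct and the core implication (2)$\Rightarrow$(1) is handled exactly as in the paper: set up the right Oka family $\F=\{I:R/I\text{ has an FFR}\}$, observe it consists of finitely generated right ideals, and invoke Corollary~\ref{f.g. "PIP supplement"}.

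The one genuine difference is in how you link (3) to the other conditions. You prove (2)$\Rightarrow$(3) directly via Schanuel's Lemma and the 2-out-of-3 property for FFRs, which is valid but, as you note, delicate. The paper instead proves (1)$\Rightarrow$(3): once (1) holds, $R$ is right noetherian (this is the observation recorded just before the proposition), so every right ideal is finitely generated and hence has an FFR by (1) itself. This route is shorter and sidesteps the subtlety about FFRs passing to summands entirely. Since you already have (2)$\Rightarrow$(1) in hand, you could close the cycle the same way with no extra work.
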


\begin{proof}
(1)$\implies$(3): As mentioned before the proposition, if every f.g.\ right
$R$-module has a finite free resolution then $R$ is right noetherian. So
every right ideal $I_R\subseteq R$ is finitely generated and therefore has
a finite free resolution.

Next, (3)$\implies$(2) follows from the easy fact that, given $I_R\subseteq R$,
if $I$ has a finite free resolution then so does $R/I$.
For (2)$\implies$(1), let $\F$ be the family of right ideals $I$ such that
$R/I$ has a finite free resolution and assume that $\setS\subseteq\F$. This
is a right Oka family according to~\cite[Ex.~5.12(5)]{Reyes}.
Moreover, if $I\in\F$ then $R/I$ is finitely presented. As noted earlier, this implies
that $I_R$ must be finitely generated~\cite[(4.26)(b)]{Lectures}. It follows
from Corollary~\ref{f.g. "PIP supplement"} that every right ideal of $R$ lies in
$\F$. Because any finitely generated right $R$-module is an extension of cyclic
modules and because the property of having an FFR is preserved by extensions,
we conclude that (1) holds.
\end{proof}

\section{Families of principal right ideals}
\label{principal right ideal section}

We will use $\Fpr(R)$ to denote the family of principal right ideals of
a ring $R$. If the ring $R$ is understood from the context, we may simply
use $\Fpr$ to denote this family. 

A theorem of Kaplansky~\cite[Thm.~12.3 \& Footnote~8]{KapDivisors} states
that a commutative ring is a principal ideal ring iff its prime ideals are all
principal. In~\cite[(3.17)]{LR} this theorem was recovered via the ``PIP supplement.''
It is therefore reasonable to hope that the methods presented here will lead
to a generalization of this result. Specifically, we would like to know whether
a ring $R$ is a principal right ideal ring (PRIR) if, say, every cocritical
right ideal is principal. It turns out that this is in fact true, but the path to
proving the result is not as straightforward as one might imagine. The obvious
starting point is to ask whether the family $\Fpr$ of principal right ideals in
an arbitrary ring $R$ is a right Oka family. Suppose that $R$ is a ring such that
$\Fpr$ is a right Oka family. Then Corollary~\ref{f.g. "PIP supplement"} readily applies
to $\Fpr$. However, it is not immediately clear whether or not $\Fpr(R)$ is
necessarily right Oka for every ring $R$. The following proposition provides
some guidance in this matter.

\begin{proposition}
\label{when principals are Oka}
Let $S\subseteq R$ be a multiplicative set. Then $\F:=\{sR:s\in S\}$ is a
right Oka family iff it is closed under similarity. In particular, for any
ring $R$, the family $\Fpr$ of principal right ideals is a right Oka family
iff it is closed under similarity.
\end{proposition}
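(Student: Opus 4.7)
The forward implication is immediate from Proposition~\ref{Oka is similarity closed}, so all the content is in the converse. Assume $\F = \{sR : s \in S\}$ is closed under similarity; the hypothesis $1 \in S$ (built into multiplicativity) gives $R = 1 \cdot R \in \F$. To check the Oka axiom, fix a right ideal $I$ and an element $a \in R$ with $I + aR,\ a^{-1}I \in \F$, and write $I + aR = sR$ and $a^{-1}I = tR$ for some $s, t \in S$. The plan is to produce an explicit element of $S$ generating $I$.

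The key move is to introduce the auxiliary right ideal $K := \{r \in R : sr \in I\}$, which is the pullback of $I$ along the surjection $\mu_s \colon R \twoheadrightarrow sR$ given by left multiplication by $s$; in particular $\mu_s$ descends to an isomorphism $R/K \cong sR/I$. Since $I \subseteq sR = I + aR$, we have $sR/I = (I + aR)/I$, and Lemma~\ref{isomorphic cyclic modules} identifies this with $R/a^{-1}I = R/tR$. Thus $K$ is similar to $tR \in \F$, and closure under similarity forces $K \in \F$, so $K = s'R$ for some $s' \in S$.

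The inclusion $sK \subseteq I$ is immediate from the definition of $K$; conversely any $i \in I \subseteq sR$ has the form $i = sr$, and then $r \in K$ by definition, so $I = sK = s(s'R) = (ss')R$. Since $ss' \in S$ by multiplicativity, $I \in \F$, completing the verification. The case of $\Fpr(R)$ in particular falls out by taking $S = R$. I do not anticipate a serious obstacle beyond correctly identifying the ideal $K$ and invoking Lemma~\ref{isomorphic cyclic modules}: once these are in place, similarity closure and the product in $S$ combine transparently to yield the Oka property.
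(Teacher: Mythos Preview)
Your proof is correct and follows essentially the same route as the paper: the auxiliary right ideal you call $K$ is precisely $s^{-1}I$ in the paper's notation, and both arguments establish $R/K \cong (I+aR)/I \cong R/a^{-1}I$ via Lemma~\ref{isomorphic cyclic modules}, invoke similarity closure to get $K = s'R$ with $s' \in S$, and conclude $I = sK = (ss')R$ using multiplicativity of $S$. The only cosmetic differences are your naming of $a^{-1}I = tR$ up front and your explicit verification that $R \in \F$, neither of which alters the substance.
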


\begin{proof}
Any right Oka family is closed under similarity by Proposition~ref{Oka is similarity closed}.
Conversely, assume that the family $\F$ in question is closed under similarity.
Suppose that $I+aR,\ a^{-1}I\in \F$, and write $I+aR=sR$ for some $s\in S$.
In the short exact sequence of right $R$-modules
\[
0 \to \frac{I+aR}{I} \to \frac{R}{I} \to \frac{R}{I+aR} \to 0,
\]
observe that $R/(a^{-1}I) \cong (I+aR)/I = sR/I \cong R/(s^{-1}I)$. Because
$\F$ is closed under similarity and $a^{-1}I\in \F$, we must also have
$s^{-1}I\in \F$. Fix $t\in S$ such that $s^{-1}I=tR$. Then because
$I\subseteq I+aR=sR$ we have $I=s(s^{-1}I) =stR$, and $st\in S$ implies
that $I\in\F$.
\end{proof}

In particular, we have the following ``first approximation'' to our desired
theorem.

\begin{corollary}
\label{weak Kap}
Let $\setS$ be a right noetherian point annihilator set for $R$. The following
are equivalent:
\begin{itemize}
\item[\normalfont (1)] $R$ is a principal right ideal ring;
\item[\normalfont (2)] $\Fpr$ is closed under similarity and every right ideal in $\setS$ is principal;
\item[\normalfont (3)] $\Fpr$ is closed under similarity and is a right noetherian point annihilator set.
\end{itemize}
\end{corollary}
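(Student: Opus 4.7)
The plan is to observe that the corollary reduces almost immediately to the combination of Proposition~\ref{when principals are Oka} (which tells us when $\Fpr$ is right Oka) with Corollary~\ref{f.g. "PIP supplement"} (which handles right Oka families consisting of finitely generated right ideals). The family $\Fpr$ consists of finitely generated right ideals, so the only missing hypothesis needed to apply Corollary~\ref{f.g. "PIP supplement"} to $\Fpr$ is that it be a right Oka family, and by Proposition~\ref{when principals are Oka} this is equivalent to $\Fpr$ being closed under similarity, which is assumed in both (2) and (3).

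First I would handle the implication (1)$\Rightarrow$(2) and (1)$\Rightarrow$(3). If $R$ is a PRIR then $\Fpr$ consists of all right ideals of $R$; it is then trivially closed under similarity, trivially contains every right ideal in $\setS$, and contains $\setS$ and hence is itself a noetherian point annihilator set by Remark~\ref{larger point annihilator sets}.

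Next, for (2)$\Rightarrow$(1) I would argue as follows. Since $\Fpr$ is assumed to be closed under similarity, Proposition~\ref{when principals are Oka} tells us that $\Fpr$ is a right Oka family. Every principal right ideal is finitely generated, so $\Fpr$ satisfies the hypothesis of Corollary~\ref{f.g. "PIP supplement"}. The hypothesis of~(2) says $\setS\subseteq\Fpr$, so condition~(3) of Corollary~\ref{f.g. "PIP supplement"} is satisfied; we conclude that $\Fpr$ consists of all right ideals of $R$, i.e., $R$ is a PRIR. The implication (3)$\Rightarrow$(1) is analogous: again $\Fpr$ is a right Oka family of finitely generated right ideals, and the hypothesis of~(3) is precisely condition~(2) of Corollary~\ref{f.g. "PIP supplement"}, which gives the conclusion.

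There is no real obstacle here; the entire statement is a straightforward packaging of the preceding machinery. The only mild point to be careful about is that the hypothesis ``$\Fpr$ closed under similarity'' must be stated explicitly in~(2) and~(3) because, unlike the family of finitely generated right ideals, $\Fpr$ need not in general be a right Oka family without this extra assumption.
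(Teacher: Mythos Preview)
Your proof is correct and follows essentially the same approach as the paper: observe that (1) makes $\Fpr$ the family of all right ideals (hence closed under similarity), use Proposition~\ref{when principals are Oka} to conclude that closure under similarity makes $\Fpr$ a right Oka family, and then invoke Corollary~\ref{f.g. "PIP supplement"} to obtain the equivalences. The paper's argument is just a more compressed version of exactly what you wrote.
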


\begin{proof}
If $R$ is a PRIR, then $\Fpr$ is equal to the family of \emph{all} right ideals in
$R$ and therefore is closed under similarity.
Also, by Proposition~\ref{when principals are Oka}, if $\Fpr$ is closed under similarity
then it is a right Oka family.
These observations along with Corollary~\ref{f.g. "PIP supplement"} establish the
equivalence of~(1)--(3).
\end{proof}

This provides some motivation to explore for which rings the family $\Fpr$
is closed under similarity (and consequently is a right Oka family). Recall that
a ring $R$ is called \emph{right duo} if every right ideal of $R$ is a two-sided
ideal. It is easy to see that in any right duo ring, and particularly in any
commutative ring, every family of right ideals is closed under similarity.
This is because in such a ring $R$, any right ideal $I$
is necessarily a two-sided ideal, so that $I=\ann(R/I)$ can be recovered from
the isomorphism class of $R/I$. Thus Proposition~\ref{when principals are Oka} applies
to show that $\Fpr$ is a right Oka family whenever $R$ is a right duo ring,
such as a commutative ring. For commutative rings $R$, the fact that $\Fpr$
is an Oka family was already noted in~\cite[(3.17)]{LR}.

Another collection of rings in which $\Fpr$ is closed under similarity is
the class of local rings. 
To show that this is the case, we use the fact~\cite[Prop.~4.6]{Reyes}
that a family $\F$ of right ideals of a ring $R$ is closed under similarity iff,
for every element $a$ and right ideal $I$ of $R$, $I+aR = R$ and
$a^{-1}I\in\F$ imply $I\in\F$.
Suppose that $R$ is local, and that $I_R\subseteq R$ and $a\in R$ are such
that $I+aR=R$ and $a^{-1}I=xR$ is principal. We want to conclude that
$I$ is principal.
Write $1=i_{0}+ar$ for some $i_0\in I$ and $r\in R$. Let $U(R)$ denote
the group of units of $R$. If $i_0\in U(R)$, then $I=R$ is principal. Else
$i_0\notin U(R)$ implies that $1-i_0=ar\in U(R)$ and hence $a\in U(R)$
($R$ local implies that right invertible elements are invertible). But then
$a^{-1}I=a^{-1}\cdot I$, so that $I=a(a^{-1}I)=axR$ is principal as desired.

\begin{remark}
\label{almost principal}
In any ring $R$, let $I_R, J_R \subseteq R$ be right ideals such that $J=xR$ is
principal and $R/I \cong R/J$. Then $I$ is generated by at most two elements.
To see this, apply Schanuel's Lemma (for instance, see~\cite[(5.1)]{Lectures})
to the exact sequences
\begin{align*}
0\rightarrow I\rightarrow R &\rightarrow R/I\rightarrow 0 \quad \text{and} \\
0\rightarrow J\rightarrow R &\rightarrow R/J\rightarrow 0
\end{align*}
to get $R\oplus I\cong R\oplus J$. The latter module is generated by at most
two elements. Therefore $I$, being isomorphic to a direct summand of this
module, is generated by at most two elements. Thus we see that such $I$ is 
``not too far'' from being principal. (Of course, the same argument shows that
if $J_R\subseteq R$ is generated by at most $n$ elements and if $I_R\subseteq R$
is similar to $J$, then $I$ is generated by at most $n+1$ elements.)
\end{remark}

The analysis above also provides the following useful fact: if the module
$R_R$ is cancellable in the category of (finitely generated) right $R$-modules
(or even in the category of finite direct sums of f.g.\ right ideals), then
the family $\Fpr$ is closed under similarity (and hence is a right Oka family).
Indeed, if this is the case, suppose that $R/I\cong R/J$ for right ideals $I$
and $J$ with $J$ principal. By the remark above, we have $I$ finitely generated
and $R\oplus I\cong R\oplus J$. With the assumption on $R_R$ we would have
$I_R\cong J_R$ principal, proving $\Fpr$ to be closed under similarity.
(In fact one can similarly show that, over such rings, the minimal number of
generators $\mu (I)$ of a f.g.\ right ideal $I\subseteq R$ is an invariant of
the similarity class of $I$.)

This provides another class of rings for which $\Fpr$ is a right Oka family,
as follows. Recall that a ring $R$ is said to have \emph{(right) stable range~1}
if, for $a,b\in R$, $aR+bR=R$ implies that $(a+br)R=R$ for some $r\in R$
(see~\cite[\S 1]{CrashCourse} for details). In~\cite[Thm.~2]{Evans73}
E.\,G.~Evans showed that for any ring with stable range~1, $R_R$ is cancellable
in the full module category $\M_R$. Thus for any ring $R$ with stable range~1,
$\Fpr(R)$ is a right Oka family. 
The class of rings with stable range~1 includes all semilocal rings (see~\cite[(20.9)]{FC}
or~\cite[(2.10)]{CrashCourse}), so that this generalizes the case of local rings
discussed above.

A similar argument applies in the class of 2-firs. A ring $R$ is said
to be a \emph{$2$-fir} (where ``fir'' stands for ``free ideal ring'') if
the free right $R$-module of rank 2 has invariant basis number and
every right ideal of $R$ generated by at most two elements is free.
We claim that $\Fpr(R)$ is closed under similarity if $R$ is a 2-fir.
Suppose that $I_R\subseteq R$ is similar to a principal right ideal $J$.
As before, we have $R\oplus I\cong R\oplus J$, and $I$ is generated
by at most two elements. So $I\cong R^m$, and $J\cong R^n$ where
$n\leq 1$ because $J$ is princpal. Thus $R^{m+1}\cong R^{n+1}$ with
$n+1\leq 2$, and the invariant basis number of the latter free module
implies that $m=n\leq 1$. Hence $I_R\cong R^m$ is a principal right
ideal.

There is yet another way in which $\Fpr(R)$ can be closed under similarity.
Suppose that every finitely generated right ideal of $R$ is principal; rings
satisfying this property are often called \emph{right B\`{e}zout rings}. Then
$\Fpr$ is equal to the set of all f.g.\ right ideals of $R$, which is a right Oka
family by~\cite[Prop.~3.7]{Reyes}.  A familiar class of examples
of such rings is the class of von Neumann regular rings; in such rings, every
finitely generated right ideal is a direct summand of $R_R$, and therefore is
principal.

We present a summary of the examples above.

\begin{examples}
\label{Fpr Oka examples}
In each of the following types of rings, the family $\Fpr$ is closed under
similarity and thus is a right Oka family:
\begin{itemize}
\item[\normalfont (1)] Right duo rings (including commutative rings);
\item[\normalfont (2)] Rings with stable range 1 (including semilocal rings);
\item[\normalfont (3)] 2-firs;
\item[\normalfont (4)] Right B\`{e}zout rings (including von Neumann regular rings).
\end{itemize}
\end{examples}

One collection of semilocal rings that we have already mentioned is the
class of left perfect rings. An application of Corollary~\ref{weak Kap}
in this case gives the following.

\begin{corollary}
\label{left perfect PRIR}
Let $R$ be a left perfect ring (e.g.\ a semiprimary ring, such as a one-sided
artinian ring), and let $\m_1,\dots,\m_n\subseteq R$ be maximal right ideals
such that the $R/\m_i$ represent all isomorphism classes of simple right
$R$-modules. Then $R$ is a PRIR iff all of the $\m_i$ are principal right
ideals.
\end{corollary}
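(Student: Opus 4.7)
The plan is to deduce this directly from Corollary~\ref{weak Kap} applied to the noetherian point annihilator set $\setS = \{\m_1,\dots,\m_n\}$. For this I need to verify two things: that $\setS$ really is a right noetherian point annihilator set for $R$, and that the family $\Fpr$ of principal right ideals is closed under similarity (equivalently, by Proposition~\ref{when principals are Oka}, is a right Oka family).

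The first point is exactly the content of Example~\ref{left perfect example}: over a left perfect ring, every right module is semi-artinian and has only finitely many isomorphism classes of simple modules, so any finite set of maximal right ideals $\m_i$ with $\{R/\m_i\}$ exhausting the simple right modules forms a right noetherian point annihilator set (via Remark~\ref{point annihilator set restated}). The second point follows from Examples~\ref{Fpr Oka examples}(2): a left perfect ring is semilocal, hence has stable range~$1$, and therefore $R_R$ is cancellable in $\M_R$, which forces $\Fpr$ to be closed under similarity.

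With both ingredients in hand, the forward direction ($R$ a PRIR $\Rightarrow$ every $\m_i$ principal) is trivial. For the converse, assume each $\m_i$ is principal; then $\setS \subseteq \Fpr$, so condition~(2) of Corollary~\ref{weak Kap} is satisfied (with both clauses confirmed above), and we conclude that every right ideal of $R$ is principal, i.e.\ $R$ is a PRIR.

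There is no real obstacle here: the statement is essentially a packaging of Corollary~\ref{weak Kap} with the two structural facts about left perfect rings already recorded in Example~\ref{left perfect example} and Examples~\ref{Fpr Oka examples}(2). The only small thing to be careful about is making sure the ``finite test set'' aspect is correctly invoked, i.e.\ that one may indeed restrict attention to the finitely many representatives $\m_1,\dots,\m_n$ rather than the whole (possibly infinite) collection of maximal right ideals of $R$; but this is exactly the point of Remark~\ref{point annihilator set restated} together with Example~\ref{left perfect example}, which provide representatives up to similarity, and similarity is precisely the equivalence preserved by the right Oka family $\Fpr$.
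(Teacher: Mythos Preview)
Your proof is correct and follows essentially the same approach as the paper: invoke Example~\ref{left perfect example} to see that $\{\m_1,\dots,\m_n\}$ is a right noetherian point annihilator set, invoke Examples~\ref{Fpr Oka examples}(2) to see that $\Fpr$ is closed under similarity (hence a right Oka family), and then apply Corollary~\ref{weak Kap}. The paper's proof is just a terser version of what you wrote.
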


\begin{proof}
By Example~\ref{Fpr Oka examples}(2), $\Fpr$ is an Oka family of right
ideals in $R$. By Example~\ref{left perfect example}, the set
$\{\m_i\}$ is a right noetherian point annihilator set. The claim then
follows from Corollary~\ref{weak Kap}.
\end{proof}

\separate

As it turns out, the family $\Fpr$ can indeed fail to be right Oka, even
in a noetherian domain! This will be shown in Example~\ref{Fpr is not Oka}
below, with the help of the following lemma.

\begin{lemma}
\label{x^-1 distributes}
Let $R$ be a ring with an element $x\in R$ that is not a left zero-divisor.
\begin{itemize}
\item[\normalfont (A)] If $J$ and $K$ are right ideals of $R$ with $J\subseteq xR$,
then
\[
x^{-1}(J+K) = x^{-1}J + x^{-1}K.
\]
\item[\normalfont (B)] For any $f\in R$, 
\[
x^{-1}(xf R + (1+xy) R) = fR + (1+yx)R.
\]
\end{itemize}
\end{lemma}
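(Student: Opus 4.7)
The plan is to prove part (A) first, since part (B) will be obtained by applying (A) to a specific decomposition followed by two concrete computations. For (A), the inclusion $x^{-1}J + x^{-1}K \subseteq x^{-1}(J+K)$ is immediate from the definition of $x^{-1}(-)$ and the fact that right-multiplication by $x$ is additive. For the reverse inclusion, suppose $r \in x^{-1}(J+K)$, so $xr = j + k$ with $j \in J$, $k \in K$. The hypothesis $J \subseteq xR$ lets me write $j = xj'$ for some $j' \in R$. Then $x(r - j') = k$, which exhibits $r - j' \in x^{-1}K$, and clearly $j' \in x^{-1}J$, giving the desired decomposition $r = j' + (r - j')$. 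Note that the non-zero-divisor hypothesis is not strictly needed for (A), but it is the standing assumption.

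For (B), I would apply (A) with $J = xfR$ (which sits inside $xR$) and $K = (1+xy)R$ to obtain
\[
x^{-1}\bigl(xfR + (1+xy)R\bigr) = x^{-1}(xfR) + x^{-1}\bigl((1+xy)R\bigr).
\]
Then I compute each summand separately. The identity $x^{-1}(xfR) = fR$ is straightforward: one inclusion is trivial, and the reverse uses that $x$ is not a left zero-divisor to cancel on the left of $xr = xfr'$.

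The substantive step is to show $x^{-1}\bigl((1+xy)R\bigr) = (1+yx)R$, and the main obstacle is to see why the right-hand side should appear at all. The trick is the elementary identity $(1+xy)\,x = x\,(1+yx)$, which is the only bridge between $xy$ and $yx$ available here. For $\supseteq$, given $(1+yx)t$, I compute $x(1+yx)t = (1+xy)xt \in (1+xy)R$. For $\subseteq$, given $xr = (1+xy)s$, I rearrange to $x(r - ys) = s$; this forces $s \in xR$, so I may write $s = xt$. Then $xr = (1+xy)xt = x(1+yx)t$, and left-cancelling $x$ (now genuinely using the non-zero-divisor hypothesis) yields $r = (1+yx)t$. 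Assembling the two computations gives the claimed identity.
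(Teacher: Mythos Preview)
Your argument is correct and follows the same approach as the paper: part (A) is proved by the same decomposition $xr = xj' + k$ with $j' \in x^{-1}J$ and $r - j' \in x^{-1}K$, and part (B) is obtained by applying (A) to $J = xfR$, $K = (1+xy)R$ and computing $x^{-1}(xfR) = fR$ and $x^{-1}((1+xy)R) = (1+yx)R$. In fact you supply more detail than the paper, which simply asserts the latter two computations ``using the fact that $x$ is not a left zero divisor''; your use of the identity $(1+xy)x = x(1+yx)$ and the rearrangement $x(r-ys) = s$ to force $s \in xR$ is exactly the intended verification.
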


\begin{proof}
(A) The containment ``$\supseteq$'' holds without any assumptions on $x$, $J$, or $K$
because $x(x^{-1}J+x^{-1}K)=x\cdot (x^{-1}J)+x\cdot (x^{-1}K)\subseteq J+K$. To
show ``$\subseteq$'' let $f\in x^{-1}(J+K)$, so that there exist $j\in J$ and $k\in K$
such that $xf=j+k$. Because $J\subseteq xR$, there exists $j_0$ such that
$j=xj_0$; notice that $j_0\in x^{-1}J$. Then we have $k=xk_0$ for $k_0=f-j_0\in x^{-1}K$.
Now $xf=xj_0+xk_0$, and because $x$ is not a left zero-divisor we have
$f=j_0+k_0\in x^{-1}J+x^{-1}K$.

(B) Setting $J=xfR$ and $K=(1+xy)R$, one may compute that $x^{-1}J=fR$ and
$x^{-1}K=(1+yx)R$ (using the fact that $x$ is not a left zero divisor). The claim
follows directly from part~(A).
\end{proof}

\begin{example}
\label{Fpr is not Oka}
\emph{A ring in which $\Fpr$ is not a right Oka family.}
Let $k$ be a field and let $R:=A_1(k)=k\langle x,y:xy=yx+1\rangle$ be the
first Weyl algebra over $k$. Then $R$ is known to be a noetherian domain (which
is simple if $k$ has characteristic 0).
Define the right ideal 
\[
I_R := x^2 R + (1+xy) R \subseteq R,
\]
which is shown to be nonprincipal in~\cite[7.11.8]{McConnellRobson}. Because
$I+xR$ contains both $1+xy\in I$ and $xy\in xR$, we must have $1\in I+xR=R$.

Because $1+yx=xy\in xR$, Lemma~\ref{x^-1 distributes}(B) above (with $f=x$)
implies that $x^{-1}I=xR+(1+yx)R=xR$.
Therefore we have $I+xR=R$ and $x^{-1}I=xR$ both members of  $\Fpr$
with $I\notin \Fpr$ proving that $\Fpr$ is not a right Oka family. In
fact we have $R/I\cong R/xR$ where $I$ is not principal (the isomorphism
follows from Lemma~\ref{isomorphic cyclic modules}), showing explicitly that $\Fpr$
is not closed under similarity as predicted by Proposition~\ref{when principals are Oka}.
In agreement with Remark~\ref{almost principal}, $I$ is generated by two
elements.

Notice that $R/xR\cong k[y]$, where $k[y]\subseteq R$ acts by right
multiplication and $x\in R$ acts as $-\partial / \partial y$. If $k$
has characteristic 0 then this module is evidently simple, and because
$R/I\cong R/xR$ we see that $I$ is a maximal right ideal. If instead
$\characteristic(k)=p>0$, then $R/xR\cong k[y]$ is evidently not simple,
and not even artinian (the submodules $y^{np}k[y]$ form a strictly
descending chain for $n\geq 0$). But every proper factor of this
module has finite dimension over $k$ and is therefore artinian. So we
see that $R/I\cong R/xR$ is 1-critical, making $I$ a 1-cocritical right
ideal. Thus regardless of the characteristic of $k$, the nonprincipal
right ideal $I$ is cocritical.

On the other hand, when $\characteristic k=0$ the ring $\mathbb{M}_2(R)$ is
known to be a principal (right and left) ideal ring---see~\cite[7.11.7]{McConnellRobson}.
Then $\Fpr(\mathbb{M}_2(R))$ is equal to the set of \emph{all} right ideals
in $\mathbb{M}_2(R)$ and thus is a right Oka family. So we see that the
property ``$\Fpr(R)$ is a right Oka family'' is not Morita invariant.\qed
\end{example}

\separate

It would be very desirable to eliminate the condition in Corollary~\ref{weak Kap}
that $\Fpr$ is closed under similarity. It turns out that a suitable
strengthening of the hypothesis on the point annihilator set $\setS$
will in fact allow us to discard that assumption. The following
constructions will help us achieve this goal in Theorem~\ref{Kap} below.
Recall that for right ideals $I$ and $J$ of a ring $R$, we write
$I \sim J$ to mean that $I$ and $J$ are similar.

\begin{definition}
For any ring $R$, we define
\begin{align*}
\Fprsim(R) &:= \{ I_R \subseteq R : \forall J_R \subseteq R,\ I \sim J \implies J \in \Fpr \} \\
&\phantom{:}= \{ I_R \subseteq R : I \text{ is only similar to principal right ideals} \}.
\end{align*}
Alternatively, $\Fprsim$ is the largest subset of $\Fpr$ that is
closed under similarity.
\end{definition}

As with $\Fpr$, we will often write $\Fprsim$ in place of $\Fprsim(R)$
when the ring $R$ is understood from the context. We saw in
Proposition~\ref{when principals are Oka} that certain families of principal
right ideals are right Oka precisely when they are closed under similarity.
But $\Fprsim$ is the \emph{largest} family of principal right ideas that
is closed under similarity. Thus one might wonder whether $\Fprsim$ might
be a right Oka family. As it turns out, we are very fortunate and this is
in fact true in every ring!

\begin{proposition}
\label{Fprsim}
For any ring $R$, $\Fprsim(R)$ is an Oka family of right ideals.
\end{proposition}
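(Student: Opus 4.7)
The plan is to verify the defining property of a right Oka family directly. Suppose $I + aR$ and $a^{-1}I$ both lie in $\Fprsim$; I must show that every right ideal $J$ similar to $I$ is principal. So fix a right $R$-module isomorphism $\varphi\colon R/I \to R/J$, and let $b \in R$ be such that $\varphi(1+I) = b+J$. Because $\varphi$ is $R$-linear, it carries the cyclic submodule $(I+aR)/I$ isomorphically onto $(J+baR)/J$ and induces on cokernels an isomorphism $R/(I+aR) \cong R/(J+baR)$. Combining these observations with Lemma~\ref{isomorphic cyclic modules} applied to the pairs $(a,I)$ and $(ba,J)$ yields the two similarities
\[
I+aR \;\sim\; J+baR \qquad\text{and}\qquad a^{-1}I \;\sim\; (ba)^{-1}J.
\]

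Since $\Fprsim$ is closed under similarity by its very construction, the hypotheses on $I$ transfer to $J$: both $J+baR$ and $(ba)^{-1}J$ lie in $\Fprsim$, and in particular are principal. Write $J+baR = eR$ and $(ba)^{-1}J = fR$. The key step is then to extract a principal generator of $J$ from $e$ and $f$. To that end, I would consider the surjection $R \to eR/J$, $r \mapsto er + J$, whose kernel is $e^{-1}J$, producing
\[
R/e^{-1}J \;\cong\; eR/J \;=\; (J+baR)/J \;\cong\; R/(ba)^{-1}J \;=\; R/fR.
\]
Thus $e^{-1}J \sim fR$, and applying similarity-closure of $\Fprsim$ once more shows $e^{-1}J$ is principal, say $e^{-1}J = gR$. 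A direct check then yields $J = egR$: the inclusion $egR \subseteq J$ follows from $g \in gR = e^{-1}J$, while the reverse inclusion holds because any $j \in J \subseteq eR$ can be written $j = er$ for some $r \in R$, and then $er = j \in J$ forces $r \in e^{-1}J = gR$.

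The essential obstacle—and the reason $\Fpr$ itself can fail to be Oka, as Example~\ref{Fpr is not Oka} shows—is that one must propagate principality across two similarity relations: first from $I$ to $J$ via $\varphi$, and then from $fR$ to $e^{-1}J$ via the isomorphism $R/e^{-1}J \cong R/fR$. Passing from $\Fpr$ to $\Fprsim$ is precisely what legalizes both steps, and this is where the strengthening from ``principal'' to ``only similar to principal'' pays off.
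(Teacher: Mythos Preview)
Your proof is correct and follows essentially the same approach as the paper's: transport the short exact sequence $0\to R/a^{-1}I\to R/I\to R/(I+aR)\to 0$ across the isomorphism $R/I\cong R/J$, use similarity-closure of $\Fprsim$ to make $J+baR=eR$ principal, then show $e^{-1}J$ is principal and conclude $J=e(e^{-1}J)$. The paper argues more abstractly (it only needs some $x$ with $(J+xR)/J\cong R/a^{-1}I$, without naming $x=ba$) and passes directly from $e^{-1}J\sim a^{-1}I$ rather than via the extra way-station $fR=(ba)^{-1}J$, but these are cosmetic differences.
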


\begin{proof}
We will denote $\F := \Fprsim(R)$. Because $I_R \sim R_R$ implies $I=R\in\Fpr$,
we see that $R\in\F$. Suppose that $I_R \subseteq R$ and $a\in R$
are such that $I+aR,\ a^{-1}I\in\F$. Set $C_1:=R/a^{-1}I$ and $C_2:=R/(I+aR)$,
so that we have an exact sequence
\[
0\rightarrow C_1\rightarrow R/I\rightarrow C_2\rightarrow 0.
\]
To prove that $I\in\F$, let $J_R\subseteq R$ be such that $R/J\cong R/I$. We
need to show that $J$ is principal. There is also an exact sequence
\[
0\rightarrow C_1\rightarrow R/J\rightarrow C_2\rightarrow 0.
\]
Thus there exists $x\in R$ with $C_1\cong(J+xR)/J$ and $C_2\cong R/(J+xR)$. But
then $R/(I+aR)=C_2\cong R/(J+xR)$ and $I+aR\in\F$ imply that $J+xR=cR$ for some
$c\in R$. Now
\[
\frac{R}{a^{-1}I} = C_1 \cong \frac{J+xR}{J} = \frac{cR}{J} \cong \frac{R}{c^{-1}J}
\]
and $a^{-1}I\in\F$, so we find that $c^{-1}J$ is principal. Then $J\subseteq J+xR=cR$
gives $J=c(c^{-1}J)$, proving that $J$ is principal.
\end{proof}

The following elementary observation will be useful in a number of places. It is simply
a convenient restatement of the fact that $\Fprsim$ is the largest set of principal
right ideals that is closed under similarity.

\begin{lemma}
\label{when similar is principal lemma}
Let $\setS$ be a set of right ideals of a ring $R$ that is closed under similarity.
If $\setS\subseteq\Fpr$, then $\setS\subseteq\Fprsim$ (and, of course, conversely).
\end{lemma}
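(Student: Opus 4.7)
The plan is to prove this by directly unwinding the definition of $\Fprsim$. Recall that $\Fprsim$ consists of precisely those right ideals $I$ such that every right ideal similar to $I$ is principal. So to show the forward implication, I need to fix an arbitrary $I \in \setS$ and verify that every $J_R \subseteq R$ with $J \sim I$ is principal.

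The argument is a two-step chase through the two hypotheses on $\setS$. First, I fix $I \in \setS$ and an arbitrary $J_R \subseteq R$ with $J \sim I$. Since $\setS$ is closed under similarity by assumption, the relation $J \sim I$ together with $I \in \setS$ gives $J \in \setS$. Next, invoking the hypothesis $\setS \subseteq \Fpr$, I conclude $J \in \Fpr$, i.e., $J$ is principal. Since $J$ was an arbitrary right ideal similar to $I$, this shows $I \in \Fprsim$, completing the inclusion $\setS \subseteq \Fprsim$.

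The converse inclusion, noted parenthetically, is immediate from the containment $\Fprsim \subseteq \Fpr$ built into the very definition of $\Fprsim$: if $\setS \subseteq \Fprsim$, then $\setS \subseteq \Fpr$. There is no real obstacle here; the lemma is essentially a maximality statement for $\Fprsim$ among subsets of $\Fpr$ that are closed under similarity, and the proof is a direct verification from the definitions. The usefulness of the lemma, as the surrounding text indicates, lies not in the difficulty of the proof but in giving a convenient criterion for promoting membership in $\Fpr$ to membership in the Oka family $\Fprsim$ (via Proposition~\ref{Fprsim}) whenever one has a similarity-closed test set $\setS$ at hand, such as the cocritical right ideals.
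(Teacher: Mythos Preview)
Your proof is correct and matches the paper's approach: the paper does not give an explicit proof but simply notes that the lemma is ``a convenient restatement of the fact that $\Fprsim$ is the largest set of principal right ideals that is closed under similarity,'' which is exactly the content of your direct verification from the definitions.
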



We are finally ready to state and prove our noncommutative generalization of
the Kaplansky-Cohen Theorem~\ref{original Kap Cohen Theorem}.

\begin{theorem}[A noncommutative Kaplansky-Cohen Theorem]
\label{Kap}
For any ring $R$, let $\setS$ be a right noetherian point annihilator set that
is closed under similarity. The following are equivalent:
\begin{itemize}
\item[\normalfont (1)] $R$ is a principal right ideal ring;
\item[\normalfont (2)] Every right ideal in $\setS$ is principal;
\item[\normalfont (3)] $\Fprsim$ is a right noetherian point annihilator set.
\end{itemize}
In particular, $R$ is a principal right ideal ring iff every cocritical right
ideal of $R$ is principal.
\end{theorem}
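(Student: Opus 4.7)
The plan is to apply Corollary~\ref{f.g. "PIP supplement"} to the family $\F := \Fprsim$. By Proposition~\ref{Fprsim} this is a right Oka family, and it trivially consists of finitely generated right ideals (being contained in $\Fpr$), so the hypotheses of the Corollary are met. Together with the noetherian point annihilator set $\setS$ from the theorem statement, it yields the equivalence of the following three conditions: (a) $\Fprsim$ is the family of all right ideals of $R$; (b) $\Fprsim$ is a right noetherian point annihilator set; (c) $\setS \subseteq \Fprsim$. These will be matched, respectively, to (1), (3), and (2).

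Condition (a) is literally the assertion that every right ideal of $R$ is principal, i.e.\ statement (1), and (b) is statement (3) verbatim, so no work is required for those identifications. The only nontrivial step is identifying (c) with (2). Statement (2) is precisely $\setS \subseteq \Fpr$; since $\setS$ is closed under similarity by hypothesis, Lemma~\ref{when similar is principal lemma} promotes this to $\setS \subseteq \Fprsim$, which is (c). The converse implication is immediate from $\Fprsim \subseteq \Fpr$. This is the only place in the argument where the similarity-closure of $\setS$ is used, and it is exactly what makes the Oka family machinery applicable.

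For the concluding assertion, I would verify that the set of cocritical right ideals satisfies the two hypotheses imposed on $\setS$. That it is a noetherian point annihilator set is Proposition~\ref{cocritical NPA}. Closure under similarity is immediate: if $I$ is $\alpha$-cocritical and $I \sim J$, then $R/J \cong R/I$ is $\alpha$-critical, so $J$ is $\alpha$-cocritical. The equivalence (1)$\iff$(2) then specializes to the stated criterion.

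I do not anticipate a genuine obstacle. The difficult content has been packaged into Proposition~\ref{Fprsim} (that $\Fprsim$ is a right Oka family in every ring, notwithstanding the failure of $\Fpr$ to be Oka in general) and into the Point Annihilator Set machinery underlying Corollary~\ref{f.g. "PIP supplement"}. The only conceptual subtlety is appreciating why the hypothesis that $\setS$ is closed under similarity cannot be dropped: without it, ``$\setS \subseteq \Fpr$'' is strictly weaker than ``$\setS \subseteq \Fprsim$,'' and the latter is what the Oka family $\Fprsim$ demands in order to extend the principal property from $\setS$ to all right ideals of $R$.
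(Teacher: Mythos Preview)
Your proof is correct and follows essentially the same approach as the paper: apply Corollary~\ref{f.g. "PIP supplement"} to the right Oka family $\Fprsim$ (via Proposition~\ref{Fprsim}), and use Lemma~\ref{when similar is principal lemma} together with the similarity-closure of $\setS$ to identify condition~(2) with $\setS\subseteq\Fprsim$. Your treatment is slightly more explicit than the paper's in spelling out why $\Fprsim$ consisting of all right ideals is equivalent to~(1) and why the cocritical right ideals are closed under similarity, but the logical structure is identical.
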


\begin{proof}
The set of cocritical right ideals of $R$ is a noetherian point annihilator
set that is closed under similarity, so it suffices to prove the equivalence of~(1)--(3).
It is easy to see that~(1) is equivalent to the claim that all right ideals lie in
$\Fprsim$. 
Also, it follows from Lemma~\ref{when similar is principal lemma} that~(2)
holds precisely when $\setS\subseteq\Fprsim$.
The equivalence of (1)--(3) now follows from Corollary~\ref{f.g. "PIP supplement"}
and Proposition~\ref{Fprsim}.
\end{proof}

As with Cohen's Theorem, there exist previous noncommutative generalizations
of the Kaplansky-Cohen theorem in the literature. In~\S\ref{previous generalizations section}
we relate our theorem with these earlier results.

Comparing our two versions of the Kaplansky-Cohen Theorem, we see that
Corollary~\ref{weak Kap} follows from Theorem~\ref{Kap}, at least if we consider
condition~(3) in each equivalence. (Recall Remark~\ref{larger point annihilator sets},
and the fact that $\Fprsim\subseteq\Fpr$.)
However, this does not mean that Corollary~\ref{weak Kap} is obsolete. It is clear
that Theorem~\ref{Kap} is preferable to Corollary~\ref{weak Kap} if we have
enough knowledge about the point annihilator set $\setS$ but we do not know whether
the family $\Fpr$ is closed under similarity. On the other hand, if we are working in a
class of rings for which we know that $\Fpr$ is closed under similarity, then
Corollary~\ref{weak Kap} may be of more use. This proved to be the case in
Corollary~\ref{left perfect PRIR}, where we were able to reduce the point annihilator
set $\setS$ to a finite set.

Notice that our earlier examination of the Weyl algebra $A_1(k)$ in
Example~\ref{Fpr is not Oka} fits nicely with Theorem~\ref{Kap}, because the
nonprincipal right ideal discussed in that example was shown to be cocritical.

\separate

As a simple application of Theorem~\ref{Kap}, we can show that \emph{a domain
$R$ with right Krull dimension $\leq 1$ is a principal right ideal domain iff its maximal right
ideals are principal.} Indeed, by Proposition~\ref{cocritical ideals} the zero
ideal of $R$ is 1-cocritical as a right ideal (and it is, of course, principal).
Thus any nonzero cocritical right ideal of $R$ is 0-critical and therefore is
a maximal right ideal. The claim then follows from Theorem~\ref{Kap}. However,
we will prove a substantially more general version of this fact in
Proposition~\ref{when semiprime is PRIR}.

\section{Families closed under direct summands}
\label{essential right ideal section}

In this section we will develop further generalizations of Cohen's Theorem and
the Kaplansky-Cohen theorems by further reducing the set of right ideals in a ring
which we are required to ``test.'' In particular, where our previous theorems
stated that it was sufficient to check that every right ideal in some noetherian
point annihilator set $\setS$ is finitely generated (or principal), we will
further reduce the task to checking that every \emph{essential} right ideal in $\setS$
is finitely generated (or principal). We begin with a definition, temporarily
digressing to families of submodules of a given modules other than $R_R$.

\begin{definition}
Let $M_R$ be a module over a ring $R$. We will say that a family $\F$ of
submodules of $M$ is \emph{closed under direct summands} if for any $N\in\F$,
any direct summand of $N$ also lies in $\F$.
\end{definition}

Notice that a family $\F$ of submodules of $M$ that is closed under direct
summands necessarily has $0\in\F$ as long as $\F\neq\varnothing$. The
following result is the reason for our interest in families that are closed
under direct summands. It shows the link between such families and
the essential submodules of $M$. 

\begin{lemma}
\label{essentials suffice}
In a module $M_R$, let $\F$ be a family of submodules that is closed under
direct summands. Then all submodules of $M$ lie in $\F$ iff all essential
submodules of $M$ lie in $\F$.
In particular, if $\F$ is a family of right ideals in a ring $R$ that is
closed under direct summands, then all right ideals of $R$ lie in $\F$ iff
all essential right ideals of $R$ lie in $\F$.
\end{lemma}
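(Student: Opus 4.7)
The forward direction is immediate: any submodule of $M$ is in $\F$ implies the essential ones are, since they form a subfamily.

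For the converse, the plan is to use the standard fact that for any submodule $N \subseteq M$, there exists a submodule $N' \subseteq M$ with $N \cap N' = 0$ such that $N \oplus N'$ is essential in $M$. This $N'$ is obtained by Zorn's Lemma as a submodule of $M$ maximal with respect to $N \cap N' = 0$ (the existence of such a maximal element uses that the union of a chain of submodules with zero intersection with $N$ again has zero intersection with $N$). One then verifies, by the usual argument, that $N \oplus N'$ meets every nonzero submodule of $M$: given $0 \neq L \subseteq M$, either $L \cap N \neq 0$ (and we are done) or else $L \cap N = 0$, in which case maximality of $N'$ forces $L \cap N' \neq 0$ or $L \subseteq N'$; either way $L \cap (N \oplus N') \neq 0$.

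Given this, the proof finishes quickly: $N \oplus N'$ is essential in $M$, so by hypothesis $N \oplus N' \in \F$. Since $\F$ is closed under direct summands and $N$ is a direct summand of $N \oplus N'$, we conclude $N \in \F$, as desired.

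The ``in particular'' clause about right ideals is just the specialization to $M = R_R$, and requires no additional work. No step here looks genuinely obstructive; the only point that needs a careful word is the existence of the essential complement $N \oplus N'$, but this is a textbook application of Zorn's Lemma and can be cited or sketched in a single sentence.
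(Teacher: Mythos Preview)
Your overall strategy matches the paper's proof exactly: pick a complement $N'$ maximal with respect to $N\cap N'=0$, show $N\oplus N'$ is essential, then use closure under direct summands. However, your verification of essentiality contains a false intermediate claim. You assert that if $L\cap N=0$, then maximality of $N'$ forces $L\cap N'\neq 0$ or $L\subseteq N'$. This is not true: take $M=\mathbb{Z}^2$, $N=\mathbb{Z}\times 0$, $N'=0\times\mathbb{Z}$, and $L=\mathbb{Z}(1,1)$; here $L\cap N=L\cap N'=0$ and $L\not\subseteq N'$, yet $N'$ is certainly maximal with $N'\cap N=0$.

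The correct argument (which the paper uses) assumes for contradiction that $L\cap(N\oplus N')=0$, so that $L+N+N'$ is an internal direct sum; then $(L\oplus N')\cap N=0$ with $L\oplus N'\supsetneq N'$ contradicts the maximality of $N'$. The point is that maximality of $N'$ is with respect to having zero intersection with $N$, so you must produce a submodule strictly containing $N'$ that still meets $N$ trivially---and that requires knowing $L\cap(N\oplus N')=0$, not merely $L\cap N=0$. Once this is repaired, your proof is identical to the paper's.
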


\begin{proof}
(``If'' direction) Suppose that every essential submodule of $M$ lies in $\F$,
and let $L_R\subseteq M$. By Zorn's lemma there exists a submodule $N_R$
maximal with respect to $L\cap N=0$ (in the literature, such $N$ is referred
to as a \emph{complement} to $L$). We claim that $N\oplus L$ is an essential
submodule of $M$. Indeed, assume for contradiction that $0\neq K\subseteq M$
is a submodule such that $(L\oplus N)\cap K=0$. Then we have the direct sum
$L\oplus N\oplus K$ in $M$. It follows that $L\cap (N\oplus K)=0$, contradicting
the maximality of $N$. 

By assumption, $N\oplus L\subseteq_e M$ implies that $N\oplus L\in\F$. Then
because $\F$ is closed under direct summands, we conclude that $N\in\F$.
\end{proof}

With this result as our motivation, let us consider a few examples of families
of right ideals that are closed under direct summands.

\begin{example}
\label{summands example}
In any module $M_R$, the easiest nontrivial example of a family that is closed
under direct summands is the family $\F$ of all direct summands of $M$! The
application of Lemma~\ref{essentials suffice} in this case says that a module
$M$ is semisimple iff every essential submodule of $M$ is a direct summand.
However, it is easy to check that a direct summand of $M$ is essential in $M$
iff it is equal to $M$. So this says that a module is semisimple iff it has
no proper essential submodules. This is a known result; for instance,
see~\cite[Ex.~3.9]{ExercisesModules}.
\end{example}

\begin{example}
\label{number of generators example}
The family of finitely generated submodules of a module $M_R$ is certainly
closed under direct summands. It follows that a module $M$ is right noetherian
iff all of its essential submodules are finitely generated. Again, this fact
can be found, for instance, in~\cite[Ex.~6.11]{ExercisesModules}.

We can generalize the result above as follows. Let $\alpha$ be any cardinal
(finite or infinite), and let $\F$ be the family of all submodules of $M$
that have a generating set of size~$<\alpha$. Then $\F$ is again closed
under direct summands. So every submodule of $M$ is generated by~$<\alpha$
elements iff the essential submodules of $M$ are all generated by~$<\alpha$
elements.

Taking $M_R=R_R$ and $\alpha=2$, we see in particular that $\Fpr$ is closed under
direct summands,  and Lemma~\ref{essentials suffice} implies that $R$ is a PRIR
iff its essential right ideals are principal.
\end{example}

Here we end our digression into families of submodules of arbitrary modules
and focus our attention on families of right ideals in a ring $R$ that are
closed under direct summands. The next two examples are of a homological
nature.

\begin{example}
\label{essential Baer example}
For a module $M_R$, let $\F$ be the family of right ideals $I\subseteq R$
such that every module homomorphism $f \colon I \to M$ extends to a
homomorphism $R \to M$. This was shown to be a right Oka family
in~\cite[Prop.~5.16]{Reyes}. We claim that $\F$ is closed under direct
summands. For if $I \oplus J \in \F$ and $f \colon I \to M$ is any
homomorphism, then we may extend $f$ trivially to $I \oplus J \to M$.
This morphism in turn extends to $R \to M$ because $I \oplus J \in \F$.
Hence $I \in \F$.

By Baer's Criterion, every right ideal lies in $\F$ precisely when $M$
is injective. So applying Lemma~\ref{essentials suffice}, we find that $M$
is injective iff every essential right ideal of $R$ lies in $\F$. This
``essential version'' of Baer's Criterion has been noticed before; for instance,
see~\cite[Ex.~3.26]{ExercisesModules}.

More generally, for any module $M_R$ and integer $n\geq 0$, let $\F_M^n$
denote the family of right ideals $I\subseteq R$ such that $\Ext_R^{n+1}(R/I,M)=0$.
The family $\F$ above was shown to be equal to $\F_M^0$ in the proof
of~\cite[Prop.~5.16]{Reyes}. We claim that the families $\F^n$ are closed
under direct summands. The case $n=0$ is covered above, so suppose that
$n\geq 1$. Note that $\Ext_R^n(R,M)=\Ext_R^{n+1}(R,M)=0$ because $R_R$
is projective. So for any right ideal $K\subseteq R$, the long exact sequence
in $\Ext$ provides isomorphisms $\Ext_R^n(K,M)\cong\Ext_R^{n+1}(R/K,M)$.
Thus for any direct sum of right ideals $I\oplus J\subseteq R$, combining
this observation with a standard fact about $\Ext$ and direct sums gives
\begin{align*}
\Ext_R^{n+1}(R/(I\oplus J),M) &\cong \Ext_R^n(I\oplus J,M)\\
&\cong \Ext_R^n(I,M) \oplus \Ext_R^n(J,M)\\
&\cong \Ext_R^{n+1}(R/I,M) \oplus \Ext_R^{n+1}(R/J,M).
\end{align*}
This makes it clear that if $I\oplus J\in\F_M^n$, then $I\in\F_M^n$.

Extending Baer's Criterion, one can show that a module
$M_R$ has injective dimension~$\leq n$ iff $\Ext_R^{n+1}(R/I,M)=0$ for
all right ideals $I$ of $R$ (this is demonstrated in the proof
of~\cite[Thm.~8.16]{Rotman}).  If we apply Lemma~\ref{essentials suffice}
to the family $\F_M^n$, we see that for any module $M_R$ we have
$\id(M)\leq n$ iff $\Ext_R^{n+1}(R/I,M)=0$ for all essential right
ideals $I$ of $R$.\qed
\end{example}

\begin{example}
\label{pd example}
As an application of Example~\ref{essential Baer example} above, we produce
another example of a family that is closed under direct summands. Let
$\F^n$ be the family of all right ideals of $R$ such that $\pd(R/I)\leq n$.
Because $R/I$ has projective dimension~$\leq n$ iff $\Ext_R^{n+1}(R/I,M)=0$
for all modules $M$, we see that $\F^n$ is equal to the intersection
of all of the families $\F_M^n$ as $M$ ranges over all right
$R$-modules. Since all of these families are closed under direct
summands, $\F^n$ is also closed under summands. In this case we can
apply Lemma~\ref{essentials suffice} to say that a ring $R$ has
$\rgldim(R)\leq n$ iff $\pd(R/I)\leq n$ for all essential right ideals
$I_R\subseteq R$. Notice that when $n=0$,  $\F^0$ is the family of
right ideal direct summands mentioned in Example~\ref{summands example}.
\end{example}

Before continuing to the heart of this section, we require a small
observation as well as a new definition.

\begin{remark}
\label{remarks about essentials}
Notice that the set of essential right ideals is a divisible semifilter,
and is closed under similarity.
It is easy to see that the set is a semifilter. To see that it is
divisible, we will use the following fact about essential submodules:
for any homomorphism of modules $f:M_R\to N_R$ and any essential submodule
$N_0\subseteq N$, the preimage $f^{-1}(N_0)$ is an essential submodule of
$M$ (see~\cite[Ex.~3.7]{ExercisesModules} for a proof of this fact). 
Now given a right ideal $I\subseteq R$, $x^{-1}I$ is the preimage of the
right ideal $I$ under the homomorphism $R_R\to R_R$ given by left
multiplication by $x$. Thus if $I$ is an essential right ideal, so is
$x^{-1}I$.
Finally, to see that this set is closed under similarity, one only needs
to realize that $I_R\subseteq R$ is essential iff $R/I$ is a singular
module; see~\cite[Ex.~2(b)]{ExercisesModules}.
\end{remark}

\begin{definition}
\label{closure under summands}
Let $\F$ be a family of right ideals in a ring $R$. We define
\[
\widetilde{\F} := \{I_R\subseteq R : I\oplus J \in \F \text{ for some } J_R\subseteq R\}.
\]
This is the smallest family of right ideals containing $\F$ that is closed
under direct summands
\end{definition}

The next result, which is fundamental to this section, is a variation of
Theorem~\ref{"PIP supplement"} and Corollary~\ref{f.g. "PIP supplement"}.

\begin{theorem}
\label{essential "PIP supplement"}
Let $\F$ be an Oka family of right ideals in a ring $R$.
\begin{itemize}
\item[\normalfont (1)] Assume that every chain of right ideals in $\F'$ has an
upper bound in $\F'$, and let $\setS$ be a point annihilator set for the
class of modules $\{R/I:I\in\Max(\F')\}$. If every essential right ideal in
$\setS$ lies in $\F$, then all right ideals of $R$ lie in $\widetilde{\F}$.
\item[\normalfont (2)] Let $\setS$ be a noetherian point annihilator set for $R$,
and assume that $\F$ consists of finitely generated right ideals. If every
essential right ideal in $\setS$ lies in $\F$, then all right ideals of $R$
lie in $\widetilde{\F}$.
\end{itemize}
\end{theorem}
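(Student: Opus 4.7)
The plan is to prove both parts by a common two-step strategy: first show that every \emph{essential} right ideal of $R$ lies in $\F$, and then invoke Lemma~\ref{essentials suffice} applied to the family $\widetilde{\F}$ in order to conclude that \emph{all} right ideals lie in $\widetilde{\F}$. The second step is immediate once the first is in place: by Definition~\ref{closure under summands}, $\widetilde{\F} \supseteq \F$ and $\widetilde{\F}$ is closed under direct summands, so if every essential right ideal of $R$ belongs to $\F \subseteq \widetilde{\F}$, then Lemma~\ref{essentials suffice} (applied to $\widetilde{\F}$, viewed as a family of submodules of $R_R$) delivers the conclusion. So the real work is in the first step, for which I would apply the point annihilator machinery of Theorem~\ref{"PIP supplement"} to the family $\F_0$ of essential right ideals of $R$.

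For part~(1), let $\F_0$ denote the family of essential right ideals of $R$. By Remark~\ref{remarks about essentials}, $\F_0$ is a divisible semifilter of right ideals. The hypothesis on~$\F$ (every nonempty chain in $\F'$ has an upper bound in $\F'$) and on~$\setS$ (a point annihilator set for $\{R/I : I \in \Max(\F')\}$) exactly match the setup of Theorem~\ref{"PIP supplement"}(1). The hypothesis that every essential right ideal in $\setS$ lies in~$\F$ is exactly $\F_0 \cap \setS \subseteq \F$. Thus Theorem~\ref{"PIP supplement"}(1) yields $\F_0 \subseteq \F$, completing the first step.

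For part~(2), the idea is to reduce to part~(1) by checking that its hypotheses are automatic when $\F$ consists of finitely generated right ideals and $\setS$ is a noetherian point annihilator set for $R$. First, the chain condition on $\F'$ holds: the union of a chain of non-finitely-generated right ideals cannot be finitely generated (otherwise some member of the chain would already contain all the generators). Second, for any $I \in \Max(\F')$, every right ideal properly containing $I$ lies in $\F$ and is therefore finitely generated, so $R/I$ is a noetherian $R$-module; consequently the class $\{R/I : I \in \Max(\F')\}$ consists of noetherian modules, and $\setS$ serves as a point annihilator set for this class by the definition of a noetherian point annihilator set. So part~(1) applies and gives the conclusion. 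The argument is essentially bookkeeping; the only step requiring any care is verifying these two reductions in part~(2), and the cleanest phrasing of part~(1) itself depends on recognizing in advance that the essential right ideals form a divisible semifilter, which is already recorded in Remark~\ref{remarks about essentials}.
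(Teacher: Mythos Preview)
Your proposal is correct and follows essentially the same approach as the paper: apply Theorem~\ref{"PIP supplement"}(1) to the divisible semifilter $\F_0$ of essential right ideals to get $\F_0\subseteq\F\subseteq\widetilde{\F}$, then invoke Lemma~\ref{essentials suffice}; for part~(2), reduce to part~(1) by verifying the chain condition on $\F'$ and that each $R/I$ with $I\in\Max(\F')$ is noetherian, exactly as in the proof of Corollary~\ref{f.g. "PIP supplement"}.
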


\begin{proof}
To prove (1), let $\setS$ and $\F$ satisfy the given hypotheses. Let
$\F_0$ denote the divisible semifilter of essential right ideals of
$R$. By assumption we have $\F_0\cap\setS\subseteq\F$, so it follows
from Theorem~\ref{"PIP supplement"} that $\F_0\subseteq\F$. Then all
essential right ideals of $R$ lie in $\widetilde{\F}\supseteq\F$, and
it follows from Lemma~\ref{essentials suffice} that all right ideals
lie in $\widetilde{\F}$.

Now (2) follows from (1) because the fact that $\F$ consists of finitely
generated right ideals implies both that every chain of right ideals
in $\F'$ has an upper bound in $\F'$ and that the class $\{R/I:I\in\Max(\F')\}$
consists of noetherian modules (as in the proof of Corollary~\ref{f.g. "PIP supplement"}).
\end{proof}

In particular, if the right Oka family $\F$ in the theorem above is in
fact closed under direct summands, then $\widetilde{\F}=\F$. Thus in this
case Theorem~\ref{essential "PIP supplement"} is a generalization of
Theorem~\ref{"PIP supplement"}. Our first application of this result will
be a strengthening of the noncommutative Cohen's Theorem~\ref{Cohen's Theorem}.

\begin{theorem}
\label{essential Cohen}
For a ring $R$, let $\setS$ be a right noetherian point annihilator set
(such as the set of cocritical right ideals). Then $R$ is right noetherian
iff every essential right ideal in $\setS$ is finitely generated.
\end{theorem}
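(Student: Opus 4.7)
The plan is to deduce this directly from Theorem \ref{essential "PIP supplement"}(2) applied to the right Oka family of finitely generated right ideals. The forward direction is immediate: in a right noetherian ring \emph{every} right ideal is finitely generated, so in particular every essential right ideal that happens to lie in $\setS$ is finitely generated.

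For the interesting direction, I would set $\F$ to be the family of all finitely generated right ideals of $R$, which is a right Oka family by \cite[Prop.~3.7]{Reyes} and which trivially consists of finitely generated right ideals. The hypothesis of the theorem says exactly that every essential right ideal in the noetherian point annihilator set $\setS$ lies in $\F$, so Theorem \ref{essential "PIP supplement"}(2) gives the conclusion that every right ideal of $R$ lies in $\widetilde{\F}$.

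The remaining step is to observe that $\widetilde{\F} = \F$ in this case. A direct summand of a finitely generated module is finitely generated, so if $I \oplus J \in \F$, then $I \in \F$. Thus $\F$ is closed under direct summands, which by Definition \ref{closure under summands} forces $\widetilde{\F} = \F$. We conclude that every right ideal of $R$ is finitely generated, so $R$ is right noetherian.

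The parenthetical remark that the cocritical right ideals form a valid choice of $\setS$ is precisely the content of Proposition \ref{cocritical NPA}. There is no serious obstacle here; the point of the theorem is really to package the prior machinery (the right Oka family of f.g.\ right ideals, the reduction to essential right ideals via Lemma \ref{essentials suffice}, and the Point Annihilator Set Theorem) into a clean statement, and the only substantive verification is the closure of $\F$ under direct summands.
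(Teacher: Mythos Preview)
Your proof is correct and matches the paper's approach exactly: the paper likewise takes $\F=\widetilde{\F}$ to be the family of finitely generated right ideals and invokes Theorem~\ref{essential "PIP supplement"}(2), citing Example~\ref{number of generators example} for the closure under direct summands that you verify directly.
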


\begin{proof}
(``If'' direction) This follows directly from Example~\ref{number of generators example}
and Theorem~\ref{essential "PIP supplement"}(2) by taking $\F=\widetilde{\F}$
to be the family of finitely generated right ideals of $R$.
\end{proof}

Our next application of Theorem~\ref{essential "PIP supplement"} will
strengthen our noncommutative version of the Kaplansky-Cohen Theorem~\ref{Kap}.
The careful statement of Theorem~\ref{essential "PIP supplement"} will pay off here.

\begin{theorem}
\label{essential Kap}
Let $R$ be a ring with noetherian point annihilator set $\setS$ that
is closed under similarity (such as the set of cocritical right ideals).
Then $R$ is a principal right ideal ring iff every essential right ideal
in $\setS$ is principal.
\end{theorem}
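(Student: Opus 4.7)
The ``only if'' direction is trivial, so the plan is for the ``if'' direction. I will adapt the proof of Theorem~\ref{essential Cohen} by replacing the right Oka family of finitely generated right ideals with $\Fprsim$, and then applying Theorem~\ref{essential "PIP supplement"}(2). The family $\Fprsim$ is right Oka by Proposition~\ref{Fprsim}, and it consists of principal (hence finitely generated) right ideals, so the structural hypotheses on $\F$ in Theorem~\ref{essential "PIP supplement"}(2) are met; the noetherian point annihilator set $\setS$ is supplied by assumption.

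The heart of the argument is verifying that every essential right ideal in $\setS$ lies in $\Fprsim$. So let $I \in \setS$ be essential, and let $J_R \subseteq R$ satisfy $J \sim I$; I must show that $J$ is principal. By Remark~\ref{remarks about essentials}, the set of essential right ideals is closed under similarity, so $J$ is essential; and since $\setS$ is closed under similarity by hypothesis, $J \in \setS$ as well. The hypothesis then forces $J$ to be principal, giving $I \in \Fprsim$ via Lemma~\ref{when similar is principal lemma}.

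Having verified the hypotheses of Theorem~\ref{essential "PIP supplement"}(2), I conclude that every right ideal of $R$ lies in $\widetilde{\Fprsim}$. To finish, note that $\Fprsim \subseteq \Fpr$ and that $\Fpr$ is itself closed under direct summands (the $\alpha = 2$ case of Example~\ref{number of generators example}), so $\widetilde{\Fprsim} \subseteq \widetilde{\Fpr} = \Fpr$. Hence every right ideal of $R$ is principal, proving $R$ is a PRIR. The main obstacle I anticipate is navigating the distinction between $\Fpr$ and $\Fprsim$: one needs $\Fprsim$ rather than $\Fpr$ to have a right Oka family on hand, since Example~\ref{Fpr is not Oka} shows $\Fpr$ can fail to be Oka; yet the hypothesis is phrased in terms of $\Fpr$. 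The resolution, as above, is that similarity preserves both essentiality and membership in $\setS$, so any single essential principal representative in $\setS$ automatically lies in the smaller family $\Fprsim$.
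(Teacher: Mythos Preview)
Your proof is correct and follows essentially the same approach as the paper: set $\F=\Fprsim$, use closure under similarity of both $\setS$ and the essential right ideals (Remark~\ref{remarks about essentials}) together with Lemma~\ref{when similar is principal lemma} to get the essential members of $\setS$ into $\Fprsim$, apply Theorem~\ref{essential "PIP supplement"}(2), and finish via $\widetilde{\Fprsim}\subseteq\Fpr$ since $\Fpr$ is closed under direct summands. The only cosmetic difference is that the paper packages the verification step by naming $\setS_0=\{\text{essential members of }\setS\}$ and observing $\setS_0$ is closed under similarity with $\setS_0\subseteq\Fpr$, whereas you argue elementwise; also, once you have shown directly that every $J\sim I$ is principal you have $I\in\Fprsim$ by definition, so the appeal to Lemma~\ref{when similar is principal lemma} at that point is superfluous.
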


\begin{proof}
(``If'' direction) Suppose that every essential right ideal in $\setS$
is principal, and set $\F:=\Fprsim$. 
If $\setS_0\subseteq\setS$ is the set of essential right ideals
in $\setS$, then $\setS_0$ is closed under similarity because both $\setS$
and the set of essential right ideals are closed under similarity (recall
Remark~\ref{remarks about essentials}).
By hypothesis $\setS_0\subseteq\Fpr$, so Lemma~\ref{when similar is principal lemma}
gives $\setS_0\subseteq\Fprsim=:\F$. 
That is, every essential right ideal in $\setS$ lies in $\F$.
%
%
Now Theorem~\ref{essential "PIP supplement"}(2) implies that all right
ideals of $R$ lie in $\widetilde{\F}$.
But $\Fpr$ is closed under direct summands by Example~\ref{number of generators example},
so $\F\subseteq\Fpr$ implies that $\widetilde{\F}\subseteq\Fpr$.
Hence every right ideal of $R$ is principal.
%
%
\end{proof}

Our final applications of Theorem~\ref{essential "PIP supplement"} show how to
reduce the test sets for various homological properties in a right noetherian
ring.

\begin{theorem}
Let $R$ be a right noetherian ring, and let $\setS$ be a right (noetherian) point annihilator
set for $R$ (such as the set of cocritical right ideals).
\begin{itemize}
\item[\normalfont (1)] A module $M_R$ has injective dimension $\leq n$ iff $\Ext^{n+1}_R(R/P,M)=0$
for all essential right ideals $P\in\setS$.
\item[\normalfont (2)] Every finitely generated right $R$-module has finite projective
dimension iff, for every essential right ideal $P\in\setS$, one has $\pd(R/P)<\infty$.
\item[\normalfont (3)] $\rgldim(R)=\sup\{\pd(R/P):P\in\setS\textnormal{ is an essential right ideal}\}$.
\end{itemize}
\end{theorem}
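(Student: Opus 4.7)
Each of the three parts will be handled by a single application of Theorem~\ref{essential "PIP supplement"}(2) to a suitable right Oka family $\F$ that is closed under direct summands. Two features of the right noetherian hypothesis make this painless. First, every right ideal of $R$ is finitely generated, so the requirement in Theorem~\ref{essential "PIP supplement"}(2) that $\F$ consist of f.g.\ right ideals is automatic. Second, whenever $\F$ is closed under direct summands, $\widetilde{\F} = \F$, so the conclusion ``every right ideal lies in $\widetilde{\F}$'' upgrades to ``every right ideal lies in $\F$.'' The ``only if'' direction in each part is immediate from the definitions.

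For part~(1), I would take $\F = \F_M^n := \{I_R\subseteq R : \Ext_R^{n+1}(R/I, M) = 0\}$, which Example~\ref{essential Baer example} exhibits as a right Oka family closed under direct summands. If every essential $P\in\setS$ lies in $\F_M^n$, then Theorem~\ref{essential "PIP supplement"}(2) extends the vanishing to every right ideal $I$, and the extended Baer criterion recalled in Example~\ref{essential Baer example} yields $\id(M) \leq n$. For part~(3), I would apply the analogous argument to the right Oka family $\F^n = \{I_R\subseteq R : \pd(R/I) \leq n\}$ of Example~\ref{pd example}, which is likewise closed under direct summands: for each fixed $n$, the hypothesis that $\pd(R/P) \leq n$ for all essential $P\in\setS$ upgrades to $\pd(R/I) \leq n$ for all right ideals $I$, which is equivalent to $\rgldim(R) \leq n$. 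Letting $n$ vary gives the sup formula, and the case $\rgldim(R) = \infty$ is handled by the convention that no finite $n$ works on either side.

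For part~(2), I would take $\F := \bigcup_n \F^n = \{I_R\subseteq R : \pd(R/I) < \infty\}$. Closure of $\F$ under cyclic extensions follows from the standard inequality $\pd(M)\leq 1 + \max(\pd(L),\pd(N))$ in a short exact sequence $0\to L\to M\to N\to 0$, which makes $\F$ a right Oka family via Theorem~\ref{cyclic module correspondence}. Closure of $\F$ under direct summands is inherited from the individual $\F^n$: if $I\oplus J$ lies in $\F$, then $I\oplus J \in \F^n$ for some $n$, and the summand-closure of $\F^n$ from Example~\ref{pd example} places $I\in\F^n\subseteq \F$. Theorem~\ref{essential "PIP supplement"}(2) then forces $\pd(R/I) < \infty$ for every right ideal $I$, and the right noetherian hypothesis extends this to every finitely generated right $R$-module by presenting it as a finite iterated extension of cyclic modules $R/I_j$, since modules of finite projective dimension are closed under extensions.

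The main (and mild) obstacle is the last verification in part~(2), that $\bigcup_n \F^n$ is closed under direct summands, since Example~\ref{pd example} only states this for each fixed $n$. Beyond that, each part reduces cleanly to the machinery of Theorem~\ref{essential "PIP supplement"}(2) applied to an Oka family already recognized in the paper.
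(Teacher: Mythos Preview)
Your proposal is correct and matches the paper's proof essentially line for line: the paper applies Theorem~\ref{essential "PIP supplement"}(2) to the families $\F_M^n$, $\F^n$, and $\F^\infty := \bigcup_n \F^n$ for parts~(1),~(3), and~(2) respectively, invoking Examples~\ref{essential Baer example} and~\ref{pd example} for the Oka and summand-closure properties exactly as you do. Your extra justification that $\F^\infty$ is closed under direct summands (via some $\F^n$) and is a right Oka family (via closure under cyclic extensions) is more explicit than the paper's one-line assertion, but the approach is the same.
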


\begin{proof}
For a module $M_R$ and a nonnegative integer $n$, let $\F^n_M$ and $\F^n$ be the
families introduced in Examples~\ref{essential Baer example} and~\ref{pd example},
where they were shown to be closed under direct summands. These familes were
shown to be right Oka families in~\cite[\S5.B]{Reyes}.
Defining $\F^{\infty}:=\bigcup_{n=1}^{\infty}\F^n$, it follows that $\F^{\infty}$ is
also a right Oka family that is closed under direct summands.

For part~(1), we note that a module $M_R$ has injective dimension $\leq n$ iff
$\F^n_M$ consists of all right ideals of $R$, which happens iff all essential right
ideals in $\setS$ lie in $\F^n_M$ according to Theorem~\ref{essential "PIP supplement"}(2).
Next we prove part~(2). Because every finitely generated right $R$-module is has a
finite filtration with cyclic filtration factors, and because the finiteness of projective
dimension is preserved by extensions, we see that every finitely generated right $R$-module
has finite projective dimension iff every cyclic right $R$-module does, iff $\F^{\infty}$
consists of all right ideals. By Theorem~\ref{essential "PIP supplement"}, this occurs iff
all essential right ideals in $\setS$ lie in $\F^{\infty}$.

Part~(3) similarly follows from Theorem~\ref{essential "PIP supplement"} applied to
the family $\F^n$, noting that $R$ has right global dimension $\leq n$ iff $\F^n$
consists of all right ideals.
\end{proof}

The above joins a whole host of results stating that certain homological properties
can be tested on special sets of ideals. We mention only a few relevant references here.
When $R$ is commutative, $\setS=\Spec(R)$, and $n=0$ in part~(1), the theorem above
recovers a result of J.\,A.~Beachy and W.\,D.~Weakley in~\cite{BeachyWeakley}.  
Part~(2) generalizes a result characterizing commutative regular rings, the ``globalizations''
of regular local rings (see~\cite[(5.94)]{Lectures}).
Many results along the lines of part~(3) are known.
For instance, a result of J.\,J.~Koker in~\cite[Lem.~2.1]{Koker} implies that if a ring $R$
has right Krull dimension, then its right global dimension is equal to the supremum of the
projective dimensions of the right modules $R/P$, where $P$ ranges over the cocritical right
ideals of $R$.
On the other hand, for a commutative noetherian ring $R$ the global dimension of $R$ is
equal to the supremum of $\pd(R/\m)$, where $\m$ ranges over the maximal ideals of $R$
(see~\cite[(5.92)]{Lectures}).
It has also been shown by K.\,R.~Goodearl~\cite[Thm.~16]{Goodearl} and
S.\,M.~Bhatwadekar~\cite[Prop.~1.1]{Bhatwadekar} 
that for a (left and right) noetherian ring $R$ whose global dimension is finite, the global
dimension of $R$ is the supremum of $\pd(R/\m)$ where $\m$ ranges over the maximal right
ideals of $R$. It is an open question whether the finiteness of the global dimension can be
dropped~\cite[Appendix]{GoodearlWarfield}.

\section{A noncommutative generalization of Kaplansky's Theorem}
\label{Kap's theorem section}

The goal of this section is to prove a noncommutative generalization of
Kaplansky's Theorem~\ref{original Kap's Theorem}.
Specifically, we shall show in Theorem~\ref{noetherian Kap} that
\emph{a noetherian ring whose maximal right ideals are all principal
is a principal right ideal ring}. To motivate our approach, we shall
recall a result~\cite[Theorem~C]{GoldiePIR} of A.\,W.~Goldie: a left
noetherian principal right ideal ring is a direct sum of a semiprime
ring and an artinian ring. 
Inspired by this fact, our proof of Theorem~\ref{noetherian Kap}
will proceed by taking noetherian ring whose maximal right ideals are
principal and decomposing it as a direct sum of a semiprime ring and
an artinian ring.
This should seem reasonable because we have already shown
in Corollary~\ref{left perfect PRIR} that, in order to test whether an
artinian ring is a PRIR, it suffices to test only its maximal right ideals.

With Goldie's result in mind, we begin this section by investigating
under what conditions one can check the PRIR condition on a semiprime
ring by testing only its maximal right ideals.
The first result applies to semiprime rings with small right Krull dimension.

\begin{proposition}
\label{when semiprime is PRIR}
Let $R$ be a semiprime ring with $\rKdim(R)\leq 1$. Then $R$ is a
principal right ideal ring iff its maximal right ideals are principal.
\end{proposition}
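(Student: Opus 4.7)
The strategy is to apply Theorem~\ref{essential Kap} to the right noetherian point annihilator set $\setS$ consisting of all cocritical right ideals of $R$. This $\setS$ is a noetherian point annihilator set by Proposition~\ref{cocritical NPA}, and it is closed under similarity since cocriticality of $I$ depends only on the isomorphism class of $R/I$. The forward direction is trivial, so the task reduces to proving, under the hypothesis that every maximal right ideal is principal, that every \emph{essential} cocritical right ideal is principal.

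My plan is to establish the stronger assertion that every essential cocritical right ideal of $R$ is actually a maximal right ideal. Let $I \subsetneq R$ be essential and cocritical. Since $R$ is semiprime and has right Krull dimension, $R$ is right Goldie (as recalled in the proof of Proposition~\ref{cocritical ideals}), so by Goldie's theorem the essential right ideal $I$ contains a regular element $c$. Left multiplication by $c$ is an injective endomorphism of $R_R$ with image $cR \subseteq I$, and Lemma~15.6 of~\cite{GoodearlWarfield} then gives
\[
\Kdim(R/cR) < \Kdim(R) \leq 1,
\]
so $R/cR$ has Krull dimension $0$ and hence is artinian. Since $R/I$ is a quotient of $R/cR$, it is likewise artinian with Krull dimension $0$.

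Because $I$ is cocritical, $R/I$ is critical; an artinian critical module is $0$-critical, and a $0$-critical module is simple. Thus $I$ is a maximal right ideal, which is principal by hypothesis. Applying Theorem~\ref{essential Kap} yields that $R$ is a principal right ideal ring, completing the proof.

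The main obstacle, and the one genuinely non-formal step, is the passage from ``essential cocritical'' to ``maximal.'' It depends on two classical inputs: the semiprime Goldie theorem (to produce a regular element inside $I$) and the strict drop in Krull dimension under quotient by the image of an injective endomorphism. Together these collapse the possible values of $\Kdim(R/I)$ to $0$, at which point criticality forces simplicity, reducing the entire problem to the hypothesis on maximal right ideals.
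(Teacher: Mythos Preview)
Your proof is correct and follows essentially the same approach as the paper: reduce via Theorem~\ref{essential Kap} to showing that every essential cocritical right ideal is maximal, then use the semiprime-with-Krull-dimension hypothesis to force $\Kdim(R/I)\leq 0$ for essential $I$. The only difference is cosmetic: the paper cites~\cite[6.3.10]{McConnellRobson} directly for the inequality $\Kdim(R/E)<\Kdim(R)$ when $E$ is essential, whereas you unpack that result by invoking Goldie's theorem to find a regular element and then applying the injective-endomorphism Krull dimension drop.
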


\begin{proof}
(``If'' direction) By Theorem~\ref{essential Kap}, it suffices to show that
the essential cocritical right ideals of $R$ are principal. Thus it is
enough to show that every essential cocritical right ideal of $R$ is
maximal. According to~\cite[6.3.10]{McConnellRobson} the fact that $R$
is semiprime with right Krull dimension means that, for every
$E_R\subseteq R$, $\Kdim(R/E)<\Kdim(R_R)=1$. 
So $\Kdim(R/E)\leq 0$, and if $E$ is also cocritical then it is
0-cocritical and thus is maximal. This completes the proof.
\end{proof}

In Example~\ref{local right noeth domain example} below we will show that
the hypothesis on the right Krull dimension cannot be relaxed.
Of course, it is not the case that \emph{every} semiprime PRIR has right
Krull dimension~$\leq 1$. In fact, in~\cite[Ex.~10.3]{GordonRobson} it is
shown (using a construction of A.\,V.~Jategaonkar from~\cite{Jategaonkar}) that there exist principal
right ideal domains whose right Krull dimension is equal to any prescribed
ordinal! So while Proposition~\ref{when semiprime is PRIR} gives a sufficient
condition for semiprime rings to be PRIRs, it is certainly not a necessary
condition. However, with some additional effort we will use this result to
formulate a precise characterization of semiprime left \emph{and} right
principal ideal rings in Corollary~\ref{semiprime PIR} below.

We will show in Proposition~\ref{semiprime must have Kdim 1}
below that if a semiprime ring with a certain finiteness condition on the
left has all maximal right ideals principal, it must have small right Krull
dimension.
We take this opportunity to recall that a multiplicatively
closed subset $S\subseteq R$ is \emph{saturated} if, for any $a,b\in R$,
$ab\in S$ implies $a,b\in S$. 

\begin{lemma}
\label{regular element has finite colength}
Let $R$ be a ring in which the multiplicative set of (resp.\ left) regular elements
is saturated and which satisfies the ascending chain condition on left ideals of
the form $Rs$ where $s\in R$ is a (resp.\ left) regular element.
Furthermore, suppose that every maximal right ideal of $R$ is principal.
If $b\in R$ is a (resp.\ left) regular element, then $R/bR$ has finite length.
\end{lemma}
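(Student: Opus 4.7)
The plan is to argue by contradiction, treating the left regular case (the regular case is identical, just with ``regular'' in place of ``left regular'' throughout). Suppose $b$ is left regular but $R/bR$ has infinite length; in particular $bR$ is a proper right ideal. The strategy is to manufacture an ascending chain of principal left ideals $Rs$ with $s$ left regular that refuses to stabilize, violating the ACC hypothesis.

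Set $c_{0} := b$. Since $c_{0}R \ne R$, choose a maximal right ideal containing $c_{0}R$; by assumption it has the form $a_{1}R$, so $c_{0} = a_{1}c_{1}$ for some $c_{1} \in R$. Because $c_{0}$ is left regular and the left regular elements form a saturated multiplicative set, both $a_{1}$ and $c_{1}$ are left regular. Left multiplication by $a_{1}$ gives a right $R$-module isomorphism $R/c_{1}R \cong a_{1}R/c_{0}R$ (surjectivity is automatic; injectivity uses left regularity of $a_{1}$), so the short exact sequence
\[
0 \longrightarrow R/c_{1}R \longrightarrow R/c_{0}R \longrightarrow R/a_{1}R \longrightarrow 0,
\]
together with the fact that $R/a_{1}R$ is simple, forces $R/c_{1}R$ to also have infinite length. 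Iterating produces sequences of left regular elements $(a_{i})_{i \geq 1}$ and $(c_{i})_{i \geq 0}$ with each $a_{i}R$ a proper maximal right ideal, $c_{i-1} = a_{i}c_{i}$, and every $R/c_{i}R$ of infinite length. This yields an ascending chain $Rc_{0} \subseteq Rc_{1} \subseteq Rc_{2} \subseteq \cdots$ of principal left ideals with left regular generators.

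By the ACC hypothesis this chain stabilizes: $Rc_{n} = Rc_{n+1}$ for some $n$. This is where the main calculation (and the only real obstacle) takes place. From $c_{n+1} \in Rc_{n} = Ra_{n+1}c_{n+1}$, pick $s \in R$ with $(1 - sa_{n+1})c_{n+1} = 0$; left regularity of $c_{n+1}$ forces $sa_{n+1} = 1$. Applying saturation to the equation $sa_{n+1} = 1$, whose left-hand side lies in the set of left regular elements (as $1$ does), shows that $s$ itself is left regular. Then $s(a_{n+1}s - 1) = sa_{n+1}s - s = s - s = 0$ combined with the left regularity of $s$ gives $a_{n+1}s = 1$. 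Thus $a_{n+1}$ is a two-sided unit of $R$, contradicting properness of the maximal right ideal $a_{n+1}R$. Hence the original assumption fails and $R/bR$ has finite length.
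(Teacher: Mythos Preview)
Your proof is correct and follows essentially the same approach as the paper: iteratively factor $c_{i-1}=a_i c_i$ with $a_iR$ maximal, obtain an ascending chain $Rc_0\subseteq Rc_1\subseteq\cdots$ of principal left ideals with (left) regular generators, and invoke the ACC hypothesis. The paper organizes this as a direct argument (showing each inclusion is strict while the process runs, so it halts with $c_nR$ maximal, and then reading off a composition series for $R/bR$), whereas you phrase it as a contradiction; the underlying computation is the same.

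One small simplification at your endgame: from $Rc_n=Rc_{n+1}$ write $c_{n+1}=s\,c_n$, and substitute into $c_n=a_{n+1}c_{n+1}$ to get $c_n=a_{n+1}s\,c_n$; left regularity of $c_n$ gives $a_{n+1}s=1$ directly, so $a_{n+1}R=R$, contradicting maximality. This avoids the extra appeal to saturation to make $s$ left regular.
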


\begin{proof}
This argument adapts some of the basic ideas of factorization in
noncommutative domains, as in Prop.~0.9.3 and Thm.~1.3.5 of~\cite{Cohn}.
However, we do not assume any of those results here.

If our fixed $b\in R$ is not right invertible, then $bR\neq R$. If $bR$
is not maximal, choose a maximal right ideal $a_1 R\subsetneq R$ such that
$bR\subsetneq a_1 R$. Then $b=a_1 b_1$ for some $b_1\in R$.
We claim that $Rb\subseteq Rb_{1}$ is strict. Indeed, assume for
contradiction that $Rb = Rb_1$. Then we may write $b_{1}=ub$ for some
$u\in R$. Thus $b=a_1 b_1=a_1 u b$, and because $b$ is (left) regular
we have $a_1 u=1$. This contradicts the fact that $a_1 R$ is maximal.
Hence $Rb\subsetneq Rb_1$.

Because the set of (left) regular elements is saturated, we may
now replace $b$ above by $b_1$ and proceed inductively to
write $b_{i-1}=a_i b_i$ (if $b_{i-1}R$ is not maximal) where
$b_i$ is (left) regular and $a_i R$ is a maximal right ideal. By
the ACC condition on $R$, the chain
\[
Rb \subsetneq Rb_1 \subsetneq Rb_2 \subsetneq \cdots
\]
cannot continue indefinitely. So the process must terminate,
say at $b_{n-1}=a_n b_n$. This means that $b_n R$ is
a maximal right ideal. Writing $a_{n+1}:=b_n$, we have a
factorization $b=a_1 \cdots a_{n+1}$ where the right ideals
$a_i R$ are maximal. Then in the filtration
\[
bR = (a_{1} \cdots a_{n+1})R \subseteq (a_{1}\cdots a_{n})R \subseteq
	\cdots \subseteq a_{1}R \subseteq R,
\]
each factor module $(a_1 \cdots a_{j-1})R/(a_{1}\cdots a_{j})R$
is a homomorphic image of the simple module $R/a_j R$ (via left
multiplication by $a_1 \cdots a_{j-1}$) and thus is simple. This
proves that $R/bR$ has finite length, as desired.
\end{proof}

In light of the hypotheses assumed above, the following definition
will be useful.

\begin{definition}
We will say that a ring $R$ satisfies \emph{left ACC-reg} if it
satisfies the ascending chain condition on left ideals of the form
$Rs$ where $s\in R$ is a regular element.
\end{definition}

\begin{proposition}
\label{semiprime must have Kdim 1}
Let $R$ be a semiprime ring with right Krull dimension that satisfies
left ACC-reg. If all of the maximal right ideals of $R$ are principal,
then $\rKdim(R)\leq 1$ and $R$ is a principal right ideal ring.
\end{proposition}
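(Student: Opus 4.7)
The plan is to establish $\rKdim(R)\le 1$ and then invoke Proposition~\ref{when semiprime is PRIR}. Since $R$ is semiprime with right Krull dimension, the fact quoted in the proof of Proposition~\ref{cocritical ideals} shows that $R$ is right Goldie; Goldie's theorem then gives a semisimple artinian classical right quotient ring $Q$, and the regular elements of $R$ are precisely those elements whose images are units in $Q$. In any semisimple artinian ring the set of units is saturated: if $ab$ is a unit then, passing to each simple artinian (and hence Dedekind-finite) component, both $a$ and $b$ must be units. Thus the regular elements of $R$ form a saturated multiplicative set, so Lemma~\ref{regular element has finite colength} applies and yields that $R/bR$ has finite length for every regular $b\in R$. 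Because in a semiprime Goldie ring every essential right ideal contains a regular element, it follows that $R/E$ has finite length for every essential right ideal $E\subseteq R$.

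The core step is the bound $\rKdim(R)\le 1$. I would take an arbitrary descending chain $R=I_0\supseteq I_1\supseteq I_2\supseteq\cdots$ of right ideals and argue that almost all factors $I_{j-1}/I_j$ are artinian. Because the uniform dimensions of the $I_j$ form a non-increasing sequence of non-negative integers bounded by the finite uniform dimension of $R_R$ (right Goldie), they stabilize, say at value $u$ for all $j\ge N$; in particular $I_j$ is essential in $I_N$ for every such $j$. Fix a complement $C$ of $I_N$ in $R$, so $I_N\oplus C$ is essential in $R$, and for every $j\ge N$ the right ideal $I_j\oplus C$ is essential in $I_N\oplus C$, hence essential in $R$. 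By the previous paragraph, $R/(I_j\oplus C)$ has finite length. For $j>N$ we have $I_{j-1}\cap C=0$, so the natural map gives $(I_{j-1}\oplus C)/(I_j\oplus C)\cong I_{j-1}/I_j$, which therefore embeds in the finite-length module $R/(I_j\oplus C)$ and is itself of finite length. Thus every factor past index $N$ is artinian, giving $\rKdim(R)\le 1$. Proposition~\ref{when semiprime is PRIR} then converts the hypothesis on maximal right ideals into the full PRIR conclusion.

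The two places requiring care are the saturation of the regular elements (a clean application of Goldie's theorem combined with Dedekind-finiteness in semisimple artinian rings) and the uniform-dimension/complement bookkeeping that converts the finite-length property of $R/E$ on essential $E$ into a genuine Krull-dimension bound on $R_R$. I expect the second to be the main obstacle: Lemma~\ref{regular element has finite colength} controls only quotients by essential right ideals, whereas the definition of Krull dimension speaks about factors of arbitrary descending chains, so one must find a device (here, tensoring the chain with a single complement $C$) to push an arbitrary chain into the essential regime without destroying the factor modules one cares about.
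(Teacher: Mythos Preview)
Your proof is correct and follows the same overall structure as the paper: establish that $R$ is right Goldie, use saturation of regular elements and Lemma~\ref{regular element has finite colength} to conclude that $R/E$ has finite length for every essential right ideal $E$, deduce $\rKdim(R)\le 1$, and then apply Proposition~\ref{when semiprime is PRIR}.

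The one point of divergence is how you obtain the Krull dimension bound. The paper simply invokes the formula from \cite[6.3.10]{McConnellRobson}, valid for any semiprime ring with right Krull dimension, that
\[
\rKdim(R)=\sup\{\Kdim(R/E)+1 : E_R\subseteq_e R\},
\]
from which $\rKdim(R)\le 1$ is immediate once each $R/E$ has finite length. You instead give a direct argument: stabilize uniform dimension along a descending chain, adjoin a fixed complement $C$ of $I_N$ to push the tail of the chain into the essential regime, and use the isomorphism $(I_{j-1}\oplus C)/(I_j\oplus C)\cong I_{j-1}/I_j$ to transfer the finite-length property. This is a clean, self-contained substitute for the cited formula (and in effect reproves the inequality direction of that formula that is needed here). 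Your version is more elementary and avoids an external reference; the paper's version is shorter. Either way, the logic is sound.
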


\begin{proof}
Because $R$ is semiprime and has right Krull dimension, it is right
Goldie (see~\cite[6.3.5]{McConnellRobson}). This has two important
consequences.
First, the set of regular elements of $R$ is saturated (because it is the
intersection of $R$ with the group of units in its semisimple right ring of
quotients).
Second, the essential right ideals of $R$ are precisely the right ideals
containing a regular element (see~\cite[(11.13)]{Lectures}).
Thus, for every $E_R\subseteq_e R$, $R/E$ has finite length by
Lemma~\ref{regular element has finite colength} and thus has Krull
dimension at most~0. Now~\cite[6.3.10]{McConnellRobson} provides
us with the following equation for $\rKdim(R)$ (which is valid because $R$
is semiprime with right Krull dimension):
\[
\rKdim(R) = \sup\{ \Kdim(R/E)+1 : E_R \subseteq_{e} R \} \leq 1. 
\]
Applying Proposition~\ref{when semiprime is PRIR}, we see that $R$ is a
principal right ideal ring.
\end{proof}

An immediate consequence is the aforementioned characterization of semiprime PIRs.

\begin{corollary}
\label{semiprime PIR}
Let $R$ be a semiprime ring.
\begin{itemize}
\item[\normalfont (1)] $R$ is a principal ideal ring iff its left and
right Krull dimensions are both at most~1 and the maximal left ideals
and maximal right ideals of $R$ are all principal.
\item[\normalfont (2)] Suppose that $R$ satisfies left ACC-reg. Then
$R$ is a principal right ideal ring iff $\rKdim(R)\leq 1$ and the maximal
right ideals of $R$ are principal.
\end{itemize}
\end{corollary}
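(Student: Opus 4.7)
The plan is to deduce the corollary almost immediately from Proposition~\ref{when semiprime is PRIR} and Proposition~\ref{semiprime must have Kdim 1} (together with their left-right symmetric counterparts). Both parts split naturally into a ``hard'' direction that needs Proposition~\ref{semiprime must have Kdim 1} and an ``easy'' direction that needs only Proposition~\ref{when semiprime is PRIR}.

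For part~(2), the backward direction is immediate: given that $R$ is semiprime with $\rKdim(R)\leq 1$ and all maximal right ideals principal, Proposition~\ref{when semiprime is PRIR} directly concludes that $R$ is a PRIR. (Note that the backward direction does not even use the left ACC-reg hypothesis.) For the forward direction, assume $R$ is a semiprime PRIR with left ACC-reg. Then $R_R$ is noetherian, so $\rKdim(R)$ exists, and of course every maximal right ideal of $R$ is principal. All hypotheses of Proposition~\ref{semiprime must have Kdim 1} are now in place, so it delivers the bound $\rKdim(R)\leq 1$.

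For part~(1), first suppose $R$ is a semiprime PIR. Then $R$ is both left and right noetherian, and every maximal left/right ideal is principal. Being left noetherian is far stronger than left ACC-reg, so Proposition~\ref{semiprime must have Kdim 1} applied to $R$ gives $\rKdim(R)\leq 1$; applying its left-right symmetric version (which is valid because the semiprime hypothesis is symmetric, and right noetherianness supplies right ACC-reg plus left Krull dimension) yields $\lKdim(R)\leq 1$. Conversely, if $R$ is semiprime with both Krull dimensions at most~$1$ and all one-sided maximal ideals principal, then Proposition~\ref{when semiprime is PRIR} applied to $R$ shows $R$ is a PRIR, and the symmetric statement shows $R$ is a PLIR; hence $R$ is a PIR.

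The only point requiring any care is verifying that Proposition~\ref{semiprime must have Kdim 1} genuinely admits a left-right symmetric version in the form needed for part~(1), and that the hypotheses of the original (semiprimeness, ACC-reg on the appropriate side, existence of Krull dimension on the appropriate side, all maximal ideals principal on the appropriate side) remain satisfied under the PIR assumption. This is the step I expect to be the main (minor) obstacle in writing up a clean proof, but it should amount to routine bookkeeping rather than a genuine difficulty, since semiprimeness is two-sided and PIR supplies noetherianness on both sides, which dominates any ACC-reg hypothesis.
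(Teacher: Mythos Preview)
Your proposal is correct and matches the paper's approach: the paper states this corollary as ``an immediate consequence'' of Propositions~\ref{when semiprime is PRIR} and~\ref{semiprime must have Kdim 1} and gives no separate proof, so your derivation---using Proposition~\ref{when semiprime is PRIR} for the backward directions and Proposition~\ref{semiprime must have Kdim 1} (plus its left-right mirror) for the forward directions---is exactly what is intended. Your observation that a PIR is two-sided noetherian, hence satisfies ACC-reg on both sides and has Krull dimension on both sides, correctly supplies all hypotheses needed for the symmetric applications.
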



It is possible to strengthen Proposition~\ref{semiprime must have Kdim 1}
to show that more general types of rings must have small right Krull
dimension.

\begin{corollary}
\label{rings which must have Kdim 1}
Let $R$ be a ring with right Krull dimension, and let $N$ be its
prime radical. Suppose that one of the following two conditions
holds:
\begin{itemize}
\item[\normalfont (A)] $R/N$ satisfies left ACC-reg;
\item[\normalfont (B)] $R/P$ satisfies left ACC-reg for every minimal
prime ideal $P\lhd R$.
\end{itemize}
If the maximal right ideals of $R$ are principal, then $\rKdim(R)\leq 1$.
In particular, a noetherian ring whose maximal right ideals are
principal has right Krull dimension at most~1.
\end{corollary}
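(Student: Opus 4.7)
The plan is to reduce to Proposition~\ref{semiprime must have Kdim 1} by passing to suitable semiprime quotients of $R$ and then lifting the Krull dimension bound back to $R$ itself. First I would invoke two standard facts from Krull dimension theory: because $R$ has right Krull dimension, the prime radical $N$ is nilpotent, $R$ has only finitely many minimal prime ideals $P_1,\dots,P_r$ with $N=P_1\cap\cdots\cap P_r$, and quotienting by a nilpotent ideal preserves the right Krull dimension, so $\rKdim(R)=\rKdim(R/N)$ (see, e.g., \cite[Ch.~6]{McConnellRobson} or \cite{GordonRobson}). The task therefore reduces to bounding $\rKdim(R/N)$ by~$1$.

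I would then observe that the hypothesis ``every maximal right ideal is principal'' descends to any quotient: if $\mathfrak{m}=aR$ is a maximal right ideal of $R$ containing an ideal $I\lhd R$, then $\mathfrak{m}/I=(a+I)(R/I)$ is principal, and every maximal right ideal of $R/I$ arises in this way. Hence both $R/N$ and each $R/P_i$ inherit the principal-maximal-right-ideal property.

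With these tools in hand, under hypothesis~(A) the ring $R/N$ is semiprime, has right Krull dimension, satisfies left ACC-reg, and has principal maximal right ideals, so Proposition~\ref{semiprime must have Kdim 1} yields $\rKdim(R/N)\leq 1$. Under hypothesis~(B) I would instead apply the proposition to each prime ring $R/P_i$ to get $\rKdim(R/P_i)\leq 1$, and then use the natural injection of right $R$-modules $R/N\hookrightarrow\bigoplus_{i=1}^{r}R/P_i$ (arising from $N=\bigcap_i P_i$) to conclude $\rKdim(R/N)\leq\max_i\rKdim(R/P_i)\leq 1$. In either case $\rKdim(R)=\rKdim(R/N)\leq 1$. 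The ``in particular'' clause is then immediate: a (left and right) noetherian ring lies in case~(A), since $R/N$ is left noetherian and hence satisfies left ACC on every chain of left ideals.

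The hard part will be the identity $\rKdim(R)=\rKdim(R/N)$ for nilpotent $N$; the reverse of the obvious inequality $\rKdim(R)\geq\rKdim(R/N)$ rests on the filtration $R\supseteq N\supseteq N^2\supseteq\cdots$, the exactness of Krull dimension, and a bound on each $\Kdim(N^i/N^{i+1})$, viewed as a right $R/N$-module, by $\rKdim(R/N)$. Once this standard ``nilpotent extension'' identity is available, the remainder of the proof is a direct assembly of the descent of the principal-maximal-right-ideal hypothesis, Proposition~\ref{semiprime must have Kdim 1}, and the embedding of $R/N$ into a direct sum of prime factor rings.
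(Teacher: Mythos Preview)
Your proposal is correct and follows essentially the same route as the paper: reduce to the semiprime quotients using the standard identity $\rKdim(R)=\rKdim(R/N)=\max_i\rKdim(R/P_i)$ (which the paper cites as \cite[6.3.8]{McConnellRobson}), observe that the principal-maximal-right-ideal hypothesis passes to factor rings, and then apply Proposition~\ref{semiprime must have Kdim 1}. The only cosmetic difference is that the paper handles~(A) and~(B) simultaneously via the single citation giving both equalities, whereas you treat~(B) by the embedding $R/N\hookrightarrow\bigoplus R/P_i$; these are equivalent.
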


\begin{proof}
According to~\cite[6.3.8]{McConnellRobson}, the ring $R$ with
right Krull dimension has finitely many minimal prime ideals
$P_1,\dots,P_n$ and
\[
\rKdim(R) = \rKdim(R/N) = \max\{ \rKdim(R/P_i) \}.
\]
Because every factor ring of $R$ again has principal maximal right
ideals, we may now apply Proposition~\ref{semiprime must have Kdim 1}.
\end{proof}

It is an open question whether the left and right Krull dimensions of a general
noetherian ring must be equal~\cite[Appendix]{GoodearlWarfield}.
However, another application of Proposition~\ref{semiprime must have Kdim 1} shows
that the Krull dimension of a noetherian PRIR must is symmetric. 

\begin{corollary}
A left noetherian principal right ideal ring $R$ has
\[
\lKdim(R) = \rKdim(R) \leq 1.
\]
\end{corollary}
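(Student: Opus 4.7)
The plan is to bound the two Krull dimensions in turn: first the right via Theorem~\ref{noetherian Kap}, then the left via Goldie's decomposition for left noetherian PRIRs together with Proposition~\ref{semiprime must have Kdim 1}. Because every right ideal of $R$ is principal, hence finitely generated, $R$ is automatically right noetherian. Combined with the hypothesis that $R$ is left noetherian, this makes $R$ two-sided noetherian, and Theorem~\ref{noetherian Kap} applies directly to give $\rKdim(R)\leq 1$.

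For the left Krull dimension, the plan is to invoke Goldie's structure result~\cite[Theorem~C]{GoldiePIR} (recalled at the start of this section) to write $R=R_1\oplus R_2$ with $R_1$ semiprime and $R_2$ artinian. Both summands inherit the properties of being left noetherian and a PRIR. The summand $R_2$ is right artinian and left noetherian, hence semiprimary, so Hopkins--Levitzki promotes left noetherian to left artinian and yields $\lKdim(R_2)=0$. For the semiprime summand $R_1$, Proposition~\ref{semiprime must have Kdim 1} already confirms $\rKdim(R_1)\leq 1$, and the strategy for the left side is to imitate the proof of that proposition: since $R_1$ is left Goldie, every essential left ideal contains a regular element, and the left-symmetric analogue of the formula in~\cite[6.3.10]{McConnellRobson} reduces the task to showing that $R_1/R_1 s$ has finite length as a left module for every regular element $s\in R_1$.

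The main obstacle is this last step: the left-symmetric version of Lemma~\ref{regular element has finite colength} would deliver finite left length of $R_1/R_1 s$ provided the maximal left ideals of $R_1$ were principal, which is not among the standing hypotheses. The plan to overcome this is to exploit the extra rigidity of a semiprime, two-sided noetherian PRIR by further decomposing $R_1$ into prime noetherian summands (via central idempotents of its common semisimple classical quotient ring) and arguing that each prime summand is sufficiently symmetric---essentially an Asano/hereditary order---to force its maximal left ideals to be principal as well. Once $\lKdim(R_1)\leq 1$ is secured, the decomposition yields $\lKdim(R)\leq 1$. Finally, equality $\lKdim(R)=\rKdim(R)$ follows because in a two-sided noetherian ring, Krull dimension zero on either side coincides with two-sided artinianness (via the semiprimary case of Hopkins--Levitzki), so the two dimensions, both bounded by $1$, cannot split as $0$ versus $1$.
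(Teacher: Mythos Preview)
Your argument for the right Krull dimension bound and for the final equality step (via Hopkins--Levitzki) is fine, though the citation is off: Theorem~\ref{noetherian Kap} only asserts the equivalence between ``PRIR'' and ``maximal right ideals principal'' for noetherian rings, and says nothing about Krull dimension. What you want there is Corollary~\ref{rings which must have Kdim 1} (or, after reducing to the semiprime case, Proposition~\ref{semiprime must have Kdim 1}).

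The genuine gap is in your treatment of the left Krull dimension. You correctly identify that the left analogue of Proposition~\ref{semiprime must have Kdim 1} would finish the job if you knew that the maximal \emph{left} ideals of the semiprime part $R_1$ were principal, and you acknowledge that this is not among the hypotheses. Your proposed workaround---decomposing $R_1$ into prime noetherian pieces and invoking an ``Asano/hereditary order'' structure to force left principality---is not a proof as written: you have not established that the prime summands are Asano orders, nor that this would yield principal maximal left ideals. This is precisely the substantial step, and it is essentially equivalent to proving that $R_1$ is a principal \emph{left} ideal ring.

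The paper handles this with a single external citation that you are missing: a result of Robson~\cite[Cor.~3.7]{RobsonPrincipal} states that a (semiprime) noetherian PRIR is automatically a PLIR. With that in hand, the left-right symmetric version of Proposition~\ref{semiprime must have Kdim 1} applies immediately to the semiprime reduction, giving $\lKdim(R)\leq 1$ without any further structural analysis. (The paper also reduces to the semiprime case by factoring out the nilradical rather than by Goldie's decomposition, but that difference is cosmetic.) Your equality argument via Hopkins--Levitzki then matches the paper's exactly.
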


\begin{proof}
As mentioned before, the Krull dimension of $R$ is not changed upon factoring out its
nilradical~\cite[6.3.8]{McConnellRobson}; thus we may assume that $R$ is semiprime.
In this case, a result of J.\,C.~Robson~\cite[Cor.~3.7]{RobsonPrincipal} states that
because $R$ is a noetherian PRIR, it must also be a PLIR.
According to Proposition~\ref{semiprime must have Kdim 1}, both $\lKdim(R)$ and
$\rKdim(R)$ are at most~1. Now $R$ has Krull dimension~0 on either side precisely when
$R$ is artinian on that side. But a noetherian ring is artinian on one side iff it is artinian on
the other side. (This follows, for instance, from the Hopkins-Levitzki Theorem~\cite[(4.15)]{FC}.)
Thus we see that the left and right Krull dimensions of $R$ must coincide, both equal to~0
when $R$ is artinian and both equal to~1 when $R$ is not artinian.
\end{proof}

The next preparatory result provides a method of testing whether
a module over a semilocal ring is zero. One may think of this as a variation
of Nakayama's Lemma (even though the latter is used in the proof below).

\begin{lemma}
\label{semilocal lemma}
Let $R$ be a semilocal ring, and let ${}_{R}B$ be a finitely
generated left module. If $B=\m B$ for all maximal right ideals
$\m$ of $R$, then $B=0$.
\end{lemma}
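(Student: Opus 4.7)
The plan is to reduce to the semisimple case via Nakayama's Lemma, and then exploit the Artin-Wedderburn structure of a simple Artinian factor to exhibit a single maximal right ideal $\mathfrak{m}$ with $\mathfrak{m}B \subsetneq B$, contradicting the hypothesis.

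First I would set $J := \rad(R)$, so that $\bar{R} := R/J$ is semisimple. Because every maximal right ideal of $R$ contains $J$, the correspondence theorem identifies the maximal right ideals of $R$ with those of $\bar{R}$, and reducing modulo $JB$ turns the assumption $\mathfrak{m}B = B$ into $\bar{\mathfrak{m}}\,\bar{B} = \bar{B}$ for $\bar{B} := B/JB$ and every maximal right ideal $\bar{\mathfrak{m}}$ of $\bar{R}$. Once the statement is proved for semisimple rings, this yields $\bar{B} = 0$, i.e.\ $JB = B$; the classical Nakayama Lemma over the semilocal ring $R$ then forces the finitely generated left module $B$ to vanish. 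So from this point I may assume $R$ is semisimple and, for contradiction, that $B \neq 0$.

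Next, decompose $R = R_1 \times \cdots \times R_k$ into its simple Artinian factors $R_i \cong M_{n_i}(D_i)$ via central idempotents. Since $B$ is finitely generated and nonzero, Zorn's Lemma produces a maximal proper left submodule $B' \subsetneq B$; let $V := B/B'$, a simple left $R$-module. Exactly one factor $R_i$ acts nontrivially on $V$, and $V$ is then (isomorphic to) the unique simple left $R_i$-module. The contradiction will come from pushing $\mathfrak{m}B = B$ forward to $V$ for a cleverly chosen $\mathfrak{m}$. Pick a primitive idempotent $e \in R_i$ (e.g.\ a rank-one diagonal matrix unit) and set
\[
\mathfrak{m} := (1-e)R_i \;\oplus\; \bigoplus_{j \neq i} R_j,
\]
which is a maximal right ideal of $R$ because $R/\mathfrak{m} \cong eR_i$ is a simple right $R_i$-module. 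Since the factors $R_j$ with $j \neq i$ annihilate $V$, a short computation identifies $\mathfrak{m}V$ with $(1-e)V$, a proper $D_i$-subspace of $V$. On the other hand, $\mathfrak{m}B = B$ pushes forward to $\mathfrak{m}V = V$, the desired contradiction.

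The main obstacle to keep in mind is that $\mathfrak{m}$ is only a right ideal, so neither $\mathfrak{m}B$ nor $\mathfrak{m}V$ is a priori a left $R$-submodule; one cannot invoke simplicity of $V$ to conclude $\mathfrak{m}V \in \{0,V\}$. The saving grace is that, for the specific $\mathfrak{m}$ above, $\mathfrak{m}V = (1-e)V$ is visibly a proper $D_i$-linear subspace by a direct matrix computation, which is weaker than being a submodule but strong enough to rule out $\mathfrak{m}V = V$.
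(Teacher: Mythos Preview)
Your proof is correct, and the reduction to the semisimple case via Nakayama's Lemma matches the paper exactly. In the semisimple case, however, you take a genuinely different route. The paper argues directly: it picks a complete set of orthogonal primitive idempotents $e_1,\dots,e_n$ with $\sum e_k = 1$, notes that each $(1-e_k)R$ is a maximal right ideal so $B=(1-e_k)B$, and then multiplies these relations together to obtain
\[
B = (1-e_1)\cdots(1-e_n)B = \bigl(1-\textstyle\sum e_k\bigr)B = 0.
\]
This avoids any mention of simple quotients or the explicit Artin--Wedderburn matrix description. Your argument instead localizes the obstruction: you pass to a simple quotient $V$ of $B$, identify the simple factor $R_i$ that supports it, and exhibit a single maximal right ideal $\mathfrak m$ for which $\mathfrak m V = (1-e)V \subsetneq V$ by a direct matrix computation. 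Both approaches are short; the paper's is slightly more elementary in that it never unpacks the matrix structure, while yours has the minor conceptual advantage of showing that a \emph{single} well-chosen maximal right ideal already witnesses the failure.
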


\begin{proof}
Let $R$ and ${}_{R}B$ be as above, and let $J=\rad(R)$. We claim
that $B/JB$ satisfies the same hypotheses over the semisimple ring
$R/J$. Indeed, the maximal right ideals of $R/J$ are the right
ideals of the form $\m/J$ for a maximal right ideal $\m$ of $R$.
For such $\m/J$ we have
\[
(\m / J) \cdot (B / JB) = \m B / JB = B / JB.
\]
Also, $B/JB$ is finitely generated over $R/J$. So $B/JB$ indeed satisfies
the same hypotheses over $R/J$.  If we knew the lemma to hold over all
semisimple rings, it would follow that $B/JB=0$. Nakayama's Lemma would
then imply that $B=0$.

So we may assume that $R$ is semisimple. Choose orthogonal
idempotents $e_1,\dots,e_n$ in $R$ whose sum is $1$ such that
$R_R=\bigoplus e_iR$ is a decomposition of $R$ into minimal right ideals.
Then for any $k$, $(1-e_k)R=\bigoplus_{i\neq k}e_iR$ is a maximal right
ideal of $R$. By hypothesis, we have $B=(1-e_k)RB=(1-e_k)B$. Because the
$e_i$ are orthogonal,
\[
(1-e_1) \cdots (1-e_k) = 1 - (e_1 + \cdots + e_k)
\]
In particular, $(1-e_1)\cdots(1-e_n)=1-(e_1+\cdots +e_n)=0$. It follows that 
\[
B = (1-e_1)B = (1-e_1)(1-e_2)B = \cdots = (1-e_1) \cdots (1-e_n)B = 0.\qedhere
\]
\end{proof}

Let us review some relevant results on noetherian rings. For an
ideal $I$ of a ring $R$, we let $\C(I)$ denote the set of elements
$c \in R$ such that $c+I$ is a regular element of $R/I$. A theorem
of J.\,C.~Robson~\cite{Robson} states that a noetherian ring $R$ with
prime radical $N$ is a direct sum of a semiprime ring and an artinian
ring iff, for every $c \in \C(N)$, $N=cN=Nc$. However, Robson commented
in~\cite[p.~346]{Robson} that if one only assumes that $N=cN$ for all
$c \in \C(N)$, one can still conclude that there exists an idempotent
$e\in R$ such that $eRe$ is semiprime, $(1-e)R(1-e)$ is artinian, and
$eR(1-e)=0$. This gives a useful ``triangular decomposition'' of such
a ring. In particular it can be used to derive the result of Goldie,
mentioned at the beginning of this section, that a left noetherian
principal right ideal ring is a direct sum of a semiprime ring and
an artinian ring. The first paragraph of our argument below borrows
from the proof of this last statement given in~\cite[Thm.~4]{Robson}.

\separate

With all of the above results and remarks in hand, we are finally ready to
prove our noncommutative generalization of Kaplansky's
Theorem~\ref{original Kap's Theorem}.

\begin{theorem}[A noncommutative Kaplansky's Theorem]
\label{noetherian Kap}
A noetherian ring is a principal right ideal ring iff its maximal
right ideals are principal.
\end{theorem}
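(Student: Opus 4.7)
The strategy, following the roadmap laid out at the start of this section, is to decompose $R$ as a direct sum $R = S \oplus A$ of a semiprime noetherian ring $S$ and an artinian ring $A$, and then verify the PRIR property on each summand separately with the tools already developed. Let $N$ denote the prime radical of $R$, which is nilpotent since $R$ is noetherian.

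First I would record, via Corollary~\ref{rings which must have Kdim 1}, that $\rKdim(R) \le 1$. I would then invoke the theorem of J.\,C.~Robson discussed above: a noetherian ring decomposes as a direct sum of a semiprime ring and an artinian ring if and only if $N = cN = Nc$ for every $c \in \C(N)$. Granting this hypothesis, write $R = S \oplus A$, and observe that any maximal right ideal of $R$ takes the form $\m \oplus A$ or $S \oplus \mathfrak{n}$; principality of such an ideal in $R$ is easily seen to force principality of $\m$ as a right ideal of $S$ (respectively of $\mathfrak{n}$ as a right ideal of $A$), so both summands inherit the assumption that every maximal right ideal is principal.

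For the summand $S$: since $S$ is semiprime and noetherian, left ACC-reg holds trivially, so Proposition~\ref{semiprime must have Kdim 1} yields that $S$ is a PRIR (with right Krull dimension at most $1$). For the summand $A$: since $A$ is artinian it is left perfect with only finitely many isomorphism classes of simple right modules, and Corollary~\ref{left perfect PRIR} applies, taking any set of (principal) maximal right ideals of $A$ that realize these isomorphism classes, to conclude that $A$ is a PRIR. Combining the two, $R = S \oplus A$ is a PRIR, as required.

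The crux, and where I expect the real work to lie, is verifying Robson's condition $N = cN = Nc$ for every $c \in \C(N)$. My plan is a Nakayama-type argument leaning on Lemma~\ref{semilocal lemma}. Fix $c \in \C(N)$; since $R$ is left noetherian, $N/cN$ is a finitely generated left $R$-module, and the goal is to force it to vanish. For any maximal right ideal $\m = xR$ of $R$, principality gives $\m \cdot (N/cN) = x(N/cN)$, so the problem reduces to forcing $x(N/cN) = N/cN$ for every such $x$, using the regularity of $c$ modulo $N$ and the explicit generator of $\m$ to control the interaction. Because Lemma~\ref{semilocal lemma} requires a semilocal ambient ring and $R$ need not be semilocal, the delicate maneuver is to pass to a suitable semilocal reduction---for example, to the classical right ring of quotients of $R$ (which is artinian, hence semilocal, once Small's theorem gives $\C(N) = \C(0)$), or to a semilocal quotient of $R$ obtained by modding out by a well-chosen ideal and exploiting the nilpotence of $N$ to lift the vanishing back. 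A symmetric argument on the right then yields $N = Nc$. Feeding these two equalities into Robson's theorem produces the desired direct sum decomposition, and the remaining structural steps proceed as above.
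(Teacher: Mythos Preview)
Your high-level roadmap is right---decompose $R$ into a semiprime piece and an artinian piece, then handle each via Proposition~\ref{semiprime must have Kdim 1} and Corollary~\ref{left perfect PRIR}---but your plan for producing that decomposition has two genuine gaps.

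First, you aim to verify Robson's full direct-sum criterion $N=cN=Nc$ for all $c\in\C(N)$, and you propose to obtain $N=Nc$ by ``a symmetric argument on the right.''  But the hypothesis is asymmetric: only maximal \emph{right} ideals are assumed principal, so there is no left-sided analogue to run.  The paper sidesteps this entirely: it proves only $N=cN$, and then invokes Robson's weaker \emph{triangular} decomposition (described just before the theorem), which yields merely
\[
R \cong \begin{pmatrix} A & B \\ 0 & S \end{pmatrix}
\]
with $A$ artinian and $S$ semiprime.  The semilocal lemma (Lemma~\ref{semilocal lemma}) is then applied over the \emph{artinian} corner $A$, not over $R$: one shows $B=\m B$ for every maximal right ideal $\m$ of $A$ by reading off a generator of the corresponding maximal right ideal of $R$, and since $A$ is semilocal, Lemma~\ref{semilocal lemma} kills $B$.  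This is exactly where that lemma belongs, and it resolves your worry that $R$ itself need not be semilocal.

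Second, your proposed route to $N=cN$---showing $\m\cdot(N/cN)=N/cN$ for all maximal $\m$ and then passing to some unnamed semilocal reduction---is speculative, and the reduction step ``$x(N/cN)=N/cN$'' has no evident justification.  The paper's argument is quite different and concrete: since $\rKdim(R/N)\le 1$, for $c\in\C(N)$ the module $R/(N+cR)$ has finite length, so $N+cR$ has a finite filtration with simple factors; by Corollary~\ref{repeated extensions} applied to $\Fprsim$ one gets $N+cR=dR$ with $d\in\C(N)$, whence $N=d(d^{-1}N)=dN=(cR+N)N=cN+N^2$, and Nakayama finishes.  You should replace your Nakayama-over-$R$ sketch with this computation.
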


\begin{proof}
(``If'' direction) Suppose $R$ is a noetherian ring whose maximal
right ideals are principal. Notice that every factor ring of $R$
satisfies the same hypotheses. Let $N\lhd R$ be the prime radical
of $R$. We claim that $N=cN$ for every $c\in\C(N)$. Let $x\mapsto\bar{x}$
denote the canonical map $R \to R/N=:\overline{R}$. By Proposition~\ref{semiprime must have Kdim 1}, 
$\rKdim(\overline{R})\leq 1$. For $c \in \C(N)$, the element
$\bar{c}\in\overline{R}$ is regular. So by~\cite[6.3.9]{McConnellRobson}
we must have $\Kdim(\overline{R}/\bar{c}\overline{R})<\Kdim(\overline{R})\leq 1$.
So the right $R$-module $R/(N+cR)\cong\overline{R}/\bar{c}\overline{R}$ has
Krull dimension at most~0 and thus has finite length. Hence $R/(N+cR)$
has a finite filtration with factors isomorphic to $R/\m_i$ for some
maximal right ideals $\m_1,\dots,\m_p$ of $R$. The set of maximal right ideals
of $R$ is certainly closed under similarity (it is the set of right ideals whose factor
module is simple), so by Lemma~\ref{when similar is principal lemma} all maximal
right ideals lie in the right Oka family $\Fprsim$. It follows from
Corollary~\ref{repeated extensions} that we have $N+cR\in\Fprsim$. Choose
$d \in R$ such that $N+cR=dR$.  Now in $\overline{R}$,
$\bar{c}\overline{R} = \bar{d}\overline{R}$ means that $\bar{c}=\bar{d}\bar{r}$
for some $r\in R$. Because the set of regular elements in the semiprime noetherian
ring $\overline{R}$ is saturated, the fact that $c\in\C(N)$ implies that $d\in\C(N)$.
Now $N \subseteq dR$ implies that $N=d(d^{-1}N)$, and $d\in\C(N)$
gives $d^{-1}N=N$. Thus $N=d(d^{-1}N)=dN=(cR+N)N=cN+N^2$, and we
conclude from Nakayama's Lemma~\cite[(4.22)]{FC} (or by induction and the
fact that $N$ is nilpotent) that $N=cN$.

Now according to Robson's decomposition result~\cite[p.~346]{Robson}
the ring $R$ is (up to isomorphism) of the form
\[
R =
\begin{pmatrix}
A & B\\
0 & S
\end{pmatrix},
\]
where $A$ is an artinian ring, $S$ is a semiprime ring, and
${}_{A}B_S$ is a (left and right noetherian) bimodule. Given any
maximal right ideal $\m$ of $A$, we will show that $B=\m B$. The
following is a maximal right ideal of $R$, and is therefore principal:
\[
\begin{pmatrix}
\m & B\\
0 & S
\end{pmatrix} =
\begin{pmatrix}
x & y\\
0 & z
\end{pmatrix} \cdot R
\]
for some $x\in\m$, $y\in B$, and $z\in S$. It is easy to see
that $zS=S$. Because $S$ is noetherian, $z$ must be a unit.
Now for any $\beta\in B$, there exists $\left(
\begin{smallmatrix}
a & b\\
0 & c
\end{smallmatrix}
\right)\in R$ such that 
\[
\begin{pmatrix}
x & y\\
0 & z
\end{pmatrix}
\begin{pmatrix}
a & b\\
0 & c
\end{pmatrix} =
\begin{pmatrix}
0 & \beta\\
0 & 0
\end{pmatrix} \in 
\begin{pmatrix}
\m & B\\
0 & S
\end{pmatrix}.
\]
Since $zc=0$ and $z$ is a unit, we must have $c=0$. Thus
$\beta=xb\in\m B$. Because $\beta\in B$ was arbitrary, this
proves that $B=\m B$. Since this holds for every maximal right
ideal $\m$ of $A$, we conclude from Lemma~\ref{semilocal lemma}
that $B=0$.

Hence $R=A\oplus S$ where $A$ is an artinian ring and $S$ is
a semiprime ring. The maximal right ideals of both $S$ and $A$
must also be principal. The artinian ring $A$ is a PRIR according
to Corollary~\ref{left perfect PRIR}, and it follows from
Proposition~\ref{semiprime must have Kdim 1} that the semiprime
ring $S$ is a PRIR.
It follows that $R=A\oplus S$ is a PRIR.
\end{proof}

It is interesting to notice that, in the commutative setting, Kaplansky's
Theorem~\ref{original Kap's Theorem} is ``stronger'' than the Kaplansky-Cohen
Theorem~\ref{original Kap Cohen Theorem}, in the sense that Kaplansky originally
derived Theorem~\ref{original Kap Cohen Theorem} as a consequence of
Theorem~\ref{original Kap's Theorem}.
This is opposite from our present situation, where the noncommutative version
of the Kaplansky Theorem~\ref{noetherian Kap} in fact follows from (the ``essential
version'' of) the noncommutative Kaplansky-Cohen Theorem~\ref{essential Kap} (through
a series of other intermediate results).

The following example shows that Kaplansky's Theorem does
not generalize if we remove the left noetherian hypothesis.

\begin{example}
\label{local right noeth example}
\emph{A local right noetherian ring $R$ with right Krull dimension 1 whose unique
maximal right ideal is principal, but which is not a principal right ideal ring.} 
This construction is based on an exercise given in~\cite[Ex.~19.12]{ExercisesClassical}.
Let $k$ be a field such that there exists a field isomorphism $\theta\colon k(x)\to k$
(which certainly does \emph{not} fix $k$), such as $k=\mathbb{Q}(x_1, x_2,\dots)$. 
Consider the discrete valuation ring $A=k[x]_{(x)}$.
Given a finitely generated module $M_A$, we define a ring
$R:=A\oplus M$ with multiplication given by
\[
(a,m) \cdot (a',m') := (aa',m'\theta(a) + ma').
\]
Let $\m=xA \oplus M$ and $N=0\oplus M$, both of
which are ideals of $R$. Notice that $N^2=0$ while $\bar{R}:=R/N\cong A$
is a domain. This means that $N$ is the prime radical of $R$. Thus
$N$ is contained in the Jacobson radical $\rad(R)$. Because
$R/\rad(R)\cong\bar{R}/\rad(\bar{R})$ is a field, the ring $R$ is
local with Jacobson radical equal to $\m$.
Using the fact that $\theta(x)\in k$ is a unit in $A$, it is easy to
conclude that $\m=(x,0)\cdot R$ is a principal right ideal.

Next we show that $R$ is right noetherian. Because the ring
$R/N\cong A$ is noetherian, it is noetherian as a right $R$-module.
Also, because $N^2=0$, the right $R$-action on $N_R=(0 \oplus M)_R$
factors through $R/N\cong A$. Because $A$ is noetherian and $M_A$
is finitely generated, this means that $N_R$ is noetherian. So
$R_R$ is an extension of the noetherian right modules $R/N$ and
$N$, proving that $R$ is right noetherian. Because the prime
radical of $R$ is $N$, $\rKdim(R)=\rKdim(R/N)=\Kdim(A)=1$ (see~\cite[6.3.8]{McConnellRobson}).
Finally, because the $R$-action on $N_R=(0 \oplus M)_R$ factors
through $R/N \cong A$, if $M_A$ is any noncyclic $A$-module
then $N$ is not principal as a right ideal in $R$. In fact,
because the minimal number of generators of $\mu(N_R)$ is equal
to $\mu(M_A)<\infty$, this number can be made as large as one
desires.
\end{example}

Notice that the example above is not semiprime, in accordance with
Proposition~\ref{when semiprime is PRIR}. With some extra work, we can
produce a similar example $R$ that is a domain.
By Proposition~\ref{when semiprime is PRIR} again, we expect such $R$ to
have right Krull dimension~$>1$.
(We thank G.\,M.~Bergman for helping to correct an earlier, incorrect version
of this example.)

\begin{example}
\label{local right noeth domain example}
\emph{A local right noetherian domain $R$ with right Krull dimension~2 whose
unique maximal right ideal is principal, but which is not a principal right
ideal ring.}
Let $k$, $\theta\colon k(x)\overset{\sim}{\to}k$, and $A=k[x]_{(x)}$ be as in
Example~\ref{local right noeth example}.
Let $B=A[[y; \theta]]\supseteq A$, the ring of skew power series over $A$
subject to the relation $ay=y\theta(a)$. Consider the ideal $I=y^2 B$, and
define the subring $R:=A\oplus I\subseteq B$.
(Notice that $R$ is the subring of $B$ consisting of power series in which
$y$ does not appear with exponent~1. We can suggestively write
$R=A[[y^2, y^3; \theta]]$, with the understanding that the equation
$ay=y\theta(a)$ only has meaning via its consequences $ay^n=y\theta^n(a)$
for $n\geq 2$.)
Being a subring of the domain $B$, $R$ itself is a domain.

We claim that $I\subseteq\rad(R)$. It suffices to show that
$1+I \subseteq U(R)$ (see~\cite[(4.5)]{FC}). Let $i\in I$; then $1+i$ is a unit
of $B$ because $I\subseteq yB=\rad(B)$. For $i':=-(1+i)^{-1}i=-i(1+i)^{-1}\in I$
(note: $(1+i)^{-1}$ commutes with $i$ because $1+i$ does), we have
\[
(1+i) \cdot (1+i') = 1+ i + (1+i)i' = 1,
\]
and similarly $(1+i')(1+i)=1$. So $1+i\in U(R)$ as desired. One can now proceed as
in Example~\ref{local right noeth example} to show that $R$ is a local ring whose
unique maximal right ideal $\m:=xA\oplus I=xR$ is principal.

It is easy to see that $R$ is a free right module over the subring
$A[[y^2; \theta]]\cong A[[t; \theta]]=:S$ with basis $\{1,y^3\}$.
Because $S$ is right noetherian (in fact, a principal right ideal domain, according
to~\cite{Jategaonkar1}), $R$ is also right noetherian.
We claim that $\rKdim(S)=2$. First we show that for every
$f\in S\setminus\{0\}$, $\Kdim(S/fS)\leq 1$.
Indeed, we can write $f=t^m x^n u$ for some unit $u\in S$. It follows from
the filtration
\[
S \supseteq tS \supseteq t^2 S \supseteq \cdots \supseteq t^m S
\supseteq t^m xS \supseteq t^m x^2 S \supseteq \cdots \supseteq t^m x^n S = fS
\]
that $S/fS$ has a filtration whose factors are isomorhpic to either $S/tS\cong A$
or $S/xS\cong A/xA$.
These filtration factors have submodule lattices isomorphic to that of $A_A$
or $(A/xA)_A$, and thus respectively have Krull dimension~1 or~0.
Hence $\Kdim(S/fS)\leq 1$ as claimed.
Because $S$ has right Krull dimension and is a domain, we see from
Proposition~\ref{cocritical ideals} that $S_S$ is a critical module. We conclude
that $\rKdim(S)=2$.
Thus $\Kdim(R_S) = \Kdim(S_S^2) = \Kdim(S_S) = 2$ (the second equality follows
from the exact sequence $0\to S\to S^2\to S\to 0$), which implies that
$\Kdim(R_R)\leq\Kdim(R_S)=2$. On the other hand, the descending chain
$I\supseteq I^2 \supseteq I^3 \supseteq \cdots$ of right ideals in $R$ has
filtration factors $I^m/I^{m+1} = y^{2m}B/y^{2m+2}B\cong A\oplus A$.
These have Krull dimension~1, so we find $\Kdim(R_R)>1$ and thus $\rKdim(R)=2$.
%

Finally, we show that $I$ is not a principal right ideal of $R$. It suffices to show
that $I/I\m$ is not a cyclic right module over $R/\m\cong k$. 
Notice that $I\m= I(Ax+I)=Ix+I^2$.
Now $By\subseteq yB$ implies that $I^2 = (y^2 B)^2 = y^4 B$.
Also, $Ix = y^2 xA \oplus y^3 xA \oplus y^4 xA \cdots$. Thus
$I\m = Ix+I^2 = y^2 xA \oplus y^3 xA \oplus y^4 B$.
It follows that
\[
\frac{I}{I\m} \cong \frac{y^2 A[[y; \theta]]}{y^2 xA \oplus y^3 xA \oplus y^4 A[[y; \theta]]}
\cong y^2 k \oplus y^3 k
\]
is not a cyclic $k$-vector space, as desired.\qed
\end{example}

We conclude this section with some questions that arise in light of the results above.
Examples~\ref{local right noeth example} and~\ref{local right noeth domain example}
show that the left noetherian hypothesis in Theorem~\ref{noetherian Kap} cannot simply
be dropped.
While it seems somehow unnatural to try to omit the right noetherian hypothesis, we
have not found an example showing this to be impossible. Thus we ask the following.

\begin{question}
Does there exist a left (but not right) noetherian ring $R$ whose
maximal right ideals are all principal, but which is not a principal
right ideal ring? What if we assume, in addition, that $R$ has right
Krull dimension?
\end{question}

While reading an earlier draft of this work, G.\,M.~Bergman kindly pointed out
to us that no such example exists if we assume further that $R$ is a \emph{domain}.
We were able to generalize this to include semiprime right Goldie rings as follows.

\begin{proposition}
Let $R$ be a semiprime left noetherian ring in which every essential right
ideal contains a regular element (the latter hypothesis is satisfied if $R$
is a domain or if $R$ is right Goldie---in particular, if $R$ has right Krull
dimension). If every maximal right ideal of $R$ is principal, then $R$
is a principal right ideal ring.
\end{proposition}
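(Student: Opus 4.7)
The plan is to invoke Theorem~\ref{essential Kap} with $\setS$ taken to be the set of cocritical right ideals of $R$. By Proposition~\ref{cocritical NPA} and Remark~\ref{cocritical is comonoform}(1), this is a noetherian point annihilator set closed under similarity, so it suffices to show that every essential cocritical right ideal of $R$ is principal.

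First I would verify that Lemma~\ref{regular element has finite colength} applies to $R$. Left noetherianity immediately gives the ACC on left ideals of the form $Rs$ with $s \in R$ regular. For the saturation of the set of regular elements, I would use that a semiprime left noetherian ring is left Goldie, so by Goldie's theorem its classical left ring of quotients $Q$ exists and is semisimple. An element of $R$ is regular in $R$ precisely when it is a unit in $Q$. Since $Q$ is semisimple artinian, $Q_Q$ has finite length, so any injective endomorphism of $Q_Q$ is an automorphism; this forces one-sided inverses in $Q$ to be two-sided, and if $ab$ is a unit in $Q$ then both $a$ and $b$ are units in $Q$, hence regular in $R$. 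This establishes saturation, and Lemma~\ref{regular element has finite colength} then gives that $R/cR$ has finite length for every regular element $c \in R$.

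Next I would fix an essential cocritical right ideal $E$ of $R$. By hypothesis, $E$ contains a regular element $c$, so $cR \subseteq E$, and $R/E$, being a quotient of the finite-length module $R/cR$, itself has finite length. In particular $\Kdim(R/E) = 0$. Since $R/E$ is critical, this forces it to be $0$-critical, that is, simple. Thus $E$ is a maximal right ideal, and by hypothesis $E$ is principal. Theorem~\ref{essential Kap} then concludes that $R$ is a principal right ideal ring.

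The main obstacle is the verification that the regular elements of $R$ form a saturated set in this purely left-sided noetherian setting; this is the one place where the semisimplicity of the classical left ring of quotients from Goldie's theorem is essential. Once that is in place, the argument is a direct combination of Lemma~\ref{regular element has finite colength} with the essential Kaplansky--Cohen theorem developed in~\S\ref{essential right ideal section}.
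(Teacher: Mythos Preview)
Your argument is correct, but it follows a different route from the paper's own proof. The paper argues directly that \emph{every} essential right ideal $E$ is principal: once $R/E$ is seen to have finite length (via Lemma~\ref{regular element has finite colength}, using exactly your saturation argument through the semisimple left ring of quotients), one takes a composition series of $R/E$, observes that the maximal right ideals lie in the right Oka family $\Fprsim$ by Lemma~\ref{when similar is principal lemma}, and then invokes Corollary~\ref{repeated extensions} to conclude $E\in\Fprsim\subseteq\Fpr$. The reduction to essential right ideals is just Example~\ref{number of generators example}.

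Your route instead feeds the same finite-length computation into the critical-module machinery: restricting to an essential \emph{cocritical} $E$, the fact that $R/E$ has finite length forces $\Kdim(R/E)=0$, so criticality makes $R/E$ simple and $E$ maximal, whence principal; Theorem~\ref{essential Kap} then finishes. This is essentially the proof strategy of Proposition~\ref{when semiprime is PRIR} transplanted to the present hypotheses, and it is arguably cleaner in that it hides the filtration bookkeeping inside Theorem~\ref{essential Kap}. The paper's approach, by contrast, avoids any appeal to Krull dimension or cocritical right ideals at this stage and works with an arbitrary essential right ideal from the start. One small citation issue: the similarity-closure of the cocritical right ideals is not what Remark~\ref{cocritical is comonoform}(1) says (that remark concerns submodules of critical modules); the similarity-closure is immediate from the definition, since being cocritical depends only on the isomorphism class of $R/I$, and is noted explicitly in the proof of Theorem~\ref{Kap}.
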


\begin{proof}
By Example~\ref{number of generators example}, it is enough to show
that every essential right ideal of $R$ is principal.
To this end, fix $E_R\subseteq_e R$.
Because $R$ has a semisimple left ring of quotients, the multiplicative set
of regular elements of $R$ is saturated.
Thus the hypotheses of Lemma~\ref{regular element has finite colength} are
satsfied.
Since $E$ contains a regular element, that lemma implies that $R/E$ has finite
length.
So $R/E$ has a finite filtration whose factors are isomorphic to $R/\m_i$
for some maximal right ideals $\m_1, \dots, \m_n$ of $R$.
Since the set of maximal right ideals is closed under similarity,
Lemma~\ref{when similar is principal lemma} implies that all maximal right
ideals of $R$ lie in $\Fprsim$. Now Corollary~\ref{repeated extensions}
implies that $E\in\Fprsim\subseteq\Fpr$, so that $E$ is principal.
\end{proof}

We also ask to what extent the PRIR condition can be tested up to similarity.

\begin{question}
Suppose that $R$ is a noetherian ring each of whose maximal right ideals is
similar to a principal right ideal. Is $R$ a principal right ideal ring? If not,
then is every right ideal of $R$ similar to a principal right ideal?
\end{question}

It would be interesting to test the status of the first Weyl algebra $R:=A_1(k)$
with respect to this question. Is every maximal right ideal of $R$ similar to
a principal right ideal? Does $R$ have any right ideals that are not similar
to principal right ideals?
More generally, we wonder whether there exists \emph{any} ring that is not
a PRIR, but in which every right ideal is similar to a principal right ideal.

\section{Previous generalizations of the Cohen and Kaplansky theorems}
\label{previous generalizations section}

In this final section we will discuss how Theorem~\ref{Cohen's Theorem} and
Theorem~\ref{Kap} relate to earlier noncommutative generalizations of the
Cohen and Kaplansky-Cohen theorems in the literature. 
(We are not aware of any previous generalizations of Kaplansky's Theorem~\ref{original Kap's Theorem}.)
In~\cite{Koh}, K.~Koh generalized both of these theorems. He defined a right
ideal $I_R\subsetneq R$ to be a ``prime right ideal'' if, for any right ideals
$A,B\subseteq R$ such that $AI\subseteq I$, $AB\subseteq I$ implies that
$A\subseteq I$ or $B\subseteq I$.
Notice that this is equivalent to the condition that for $a,b\in R$,
$aRb\subseteq I$ with $aRI\subseteq I$ imply that either $a\in I$ or $b\in I$.
We will refer to such a right ideal as a \emph{Koh-prime right
ideal}. Koh showed that a ring $R$ is right noetherian (resp.\ a PRIR)
iff all of its Koh-prime right ideals are finitely generated (resp.\ principal).
Independently, in~\cite{Chandran1} (also appeared~\cite{Chandran2}) V.\,R.~Chandran
also gave generalizations of the Cohen and Kaplansky theorems, showing that a
right duo ring is right noetherian (resp.\ a PRIR) iff all prime ideals of $R$
are finitely generated (whether this is f.g.\ as an ideal or f.g.\ as a right
ideal is irrelevant, since $R$ is right duo). But Koh's result implies Chandran's
result, since a two-sided ideal is Koh-prime as a right ideal iff it is a prime
ideal in the usual sense.

Notice that our completely prime right ideals are necessarily Koh-prime right
ideals. For suppose that $P_R\subseteq R$ is completely prime and that $A,B\subseteq R$
are such that $AP\subseteq P$ and $AB\subseteq P$. If $A\nsubseteq P$, then there
exists $a\in A\setminus P$. Now $aP\subseteq P$, and for any $b\in B$ we have $ab\in P$.
It follows that $b\in P$ because $P$ is completely prime. So $B\subseteq P$, proving
that $P$ is Koh-prime. \emph{It follows that Theorem~\ref{Cohen's Theorem} and
Theorem~\ref{Kap}, with the set $\setS$ taken to be the set of completely prime
right ideals, imply Koh's theorems, which in turn imply Chandran's theorems.}

\separate

On the other hand, G.\,O.~Michler offered another noncommutative generalization
of Cohen's Theorem in~\cite{Michler}. He defined a right ideal $I\subsetneq R$ to
be ``prime'' if $aRb\subseteq I$ implies that either $a\in I$ or $b\in I$. This
is equivalent to saying that, for right ideals $A,B\subseteq R$, $AB\subseteq I$
implies that one of $A$ or $B$ lies in $I$. We will refer to such right ideals
as \emph{Michler-prime right ideals}. Michler proved in~\cite{Michler} that a ring
is right noetherian iff its Michler-prime right ideals are all finitely generated. 
Notice immediately that the Michler-prime right ideals of a given ring form a subset
of the set of all Koh-prime right ideals of that ring; thus Michler's version of Cohen's
Theorem generalizes Koh's version.

If we were to try to recover Michler's theorem directly from Theorem~\ref{Cohen's Theorem},
we would need to check that the Michler-prime right ideals form a noetherian
point annihilator set over an arbitrary ring $R$. In order to settle whether or
not this is true, we offer an alternate description of the Michler-prime right
ideals below. Recall that a module $M_R\neq 0$ is said to be a \emph{prime module}
if, for every nonzero submodule $N\subseteq M$, $\ann(N)=\ann(M)$. One can show
that the annihilator of a prime module is a prime ideal (for example, as in~\cite[(3.54)]{Lectures}).

\begin{proposition}
\label{Michler-prime characterization}
A right ideal $P\subsetneq R$ is Michler-prime iff $R/P$ is a prime module.
\end{proposition}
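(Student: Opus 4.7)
The plan is to translate directly between the ring-theoretic condition (Michler-primeness of $P$) and the module-theoretic condition (primeness of $R/P$), using two standard facts: for any right $R$-module $M$, the annihilator $\ann(M)$ is a two-sided ideal of $R$; and $\ann(R/P) = \{r \in R : Rr \subseteq P\}$ is the largest two-sided ideal contained in $P$ (so $\ann(R/P) \subseteq P$, with equality iff $P$ is two-sided).

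For the ($\Leftarrow$) direction, I would assume $R/P$ is prime and that $aRb \subseteq P$ with $a \notin P$, and then deduce $b \in P$. Since $a \notin P$, the cyclic submodule $\bar a R \subseteq R/P$ is nonzero, and the hypothesis $arb \in P$ for every $r \in R$ says exactly that $(\bar a R) \cdot b = 0$, i.e., $b \in \ann(\bar a R)$. Primeness of $R/P$ then gives $b \in \ann(\bar a R) = \ann(R/P) \subseteq P$, as required. For the ($\Rightarrow$) direction, I would assume $P$ is Michler-prime, let $0 \neq N \subseteq R/P$ be a submodule, and take $a \in \ann(N)$ (the reverse containment $\ann(R/P) \subseteq \ann(N)$ is automatic). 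Picking a nonzero $\bar b \in N$, so that $b \notin P$, the containment $\bar b R \subseteq N$ combined with $Na = 0$ gives $\overline{bra} = 0$ for all $r \in R$, i.e., $bRa \subseteq P$. Michler-primeness then forces $a \in P$.

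The one point where a slip is possible, and which I would flag as the main subtlety, is that $a \in P$ is strictly weaker than the desired conclusion $a \in \ann(R/P)$, because $P$ need not be two-sided. The remedy is immediate: since $\ann(N)$ is already a two-sided ideal of $R$, applying the previous paragraph's argument to each element of $Ra$ in place of $a$ (equivalently, using $Ra \subseteq \ann(N) \subseteq P$ once one knows $\ann(N) \subseteq P$) upgrades the conclusion to $Ra \subseteq P$, which is exactly $a \in \ann(R/P)$. Apart from this bit of bookkeeping the proof is a routine unwinding of the two definitions, and I do not expect any genuine obstacle.
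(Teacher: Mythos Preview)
Your proof is correct and follows essentially the same approach as the paper. The only cosmetic difference is that in the ($\Rightarrow$) direction the paper works directly with the two-sided ideal $B=\ann(A/P)$ and applies the equivalent ideal-theoretic formulation of Michler-primeness (for right ideals $A,B$, $AB\subseteq P$ implies $A\subseteq P$ or $B\subseteq P$), which yields $B\subseteq P$ in one step and thereby bypasses the bookkeeping you flagged about upgrading $a\in P$ to $Ra\subseteq P$.
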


\begin{proof}
First suppose that $P$ is Michler-prime. To see that $R/P$ is a prime
module, consider a nonzero submodule $A/P\subseteq R/P$ (so that the right
ideal $A$ properly contains $P$). Denote $B:=\ann(A/P)\lhd R$. Then
$(A/P)\cdot B=0$ implies that $AB\subseteq P$. Because $P$ is Michler-prime,
this means that $B\subseteq P$, so that $(R/P)\cdot B=(P+B)/P=0$. So
$B=\ann(R/P)$, proving that the module $R/P$ is prime.

Conversely, suppose that $R/P$ is a prime module. Let $a,\ b\in R$
be such that $aRb\subseteq P$ and $a\notin P$. It follows that $b$
annihilates $(P+aR)/P\neq 0$, so that $b\in \ann((P+aR)/P)=\ann(R/P)$.
In particular, $(R/P)\cdot b=0$ implies that $b\in P$. This proves
that $P$ is Michler-prime.
\end{proof}

\begin{corollary}
\label{Michler-prime NPA}
For a ring $R$, the set $\setS$ of Michler-prime right ideals is a noetherian
point annihilator set iff every nonzero noetherian right $R$-module has a 
prime submodule. This is satisfied, in particular, if $R$ has the ACC on ideals.
\end{corollary}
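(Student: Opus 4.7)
The plan is to interpret ``$\setS$ is a noetherian point annihilator set'' in terms of the existence of nonzero cyclic prime submodules via Proposition~\ref{Michler-prime characterization}, and then observe that the existence of a (possibly large) prime submodule is equivalent to the existence of a cyclic one.

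First I would unwind definitions. By Remark~\ref{point annihilator set restated}, $\setS$ is a noetherian point annihilator set iff every nonzero noetherian module $M_R$ contains a nonzero cyclic submodule of the form $R/P$ with $P \in \setS$, i.e.\ with $P$ Michler-prime. Proposition~\ref{Michler-prime characterization} identifies such $R/P$ precisely with the nonzero cyclic prime modules, so the condition becomes: every nonzero noetherian $M_R$ contains a nonzero cyclic prime submodule. To match the statement, I then need the equivalence ``$M$ has a prime submodule $\iff$ $M$ has a nonzero cyclic prime submodule.'' The nontrivial direction is the forward one, which I would handle by observing that if $N \subseteq M$ is a prime submodule and $0 \neq n \in N$, then $nR$ itself is cyclic and prime: for any nonzero $N' \subseteq nR \subseteq N$, primeness of $N$ yields
\[
\ann(N') = \ann(N) = \ann(nR),
\]
as required.

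For the second assertion, assume $R$ satisfies ACC on (two-sided) ideals and let $M_R \neq 0$. Then the nonempty family
\[
\mathcal{A} = \{\ann(N) : 0 \neq N \subseteq M\}
\]
of ideals of $R$ has a maximal element $P = \ann(N_0)$ for some nonzero $N_0 \subseteq M$. I would then verify that $N_0$ is prime: for any nonzero submodule $N' \subseteq N_0$, one has $\ann(N') \in \mathcal{A}$ with $\ann(N') \supseteq P$, and maximality forces $\ann(N') = P = \ann(N_0)$. Thus $M$ has a prime submodule, which in particular gives the hypothesis of the ``iff'' above for all nonzero noetherian $M_R$.

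I do not anticipate a serious obstacle; the proof is essentially bookkeeping once Proposition~\ref{Michler-prime characterization} is in hand. The only point worth flagging is the bridge observation that a nonzero cyclic submodule of a prime module is itself prime, which is what connects the point-annihilator formulation (cyclic) to the submodule formulation (arbitrary). It is also worth noting that noetherianness of $M$ plays no essential role in the second half, so one in fact proves the stronger statement that ACC on ideals alone forces every nonzero $R$-module to contain a prime submodule.
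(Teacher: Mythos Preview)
Your proposal is correct and follows essentially the same route as the paper: both hinge on the observation that a nonzero (cyclic) submodule of a prime module is again prime, which bridges the point-annihilator formulation with the ``prime submodule'' formulation via Proposition~\ref{Michler-prime characterization}. The only difference is that for the ACC assertion you supply the standard maximal-annihilator argument directly, whereas the paper simply cites~\cite[(3.58)]{Lectures} for the same fact (and, like you, notes that it holds for \emph{all} nonzero modules, not just noetherian ones).
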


\begin{proof}
The ``only if'' direction is clear from Proposition~\ref{Michler-prime characterization}.
For the ``if'' direction, let $M_R$ be any module with a prime submodule
$N$. Notice that a nonzero submodule of a prime module is prime. Thus for any
nonzero element $m\in N$, $R/\ann(m)\cong mR\subseteq N$ is a prime
module. By Proposition~\ref{Michler-prime characterization}, $\ann(m)$ is a
Michler-prime right ideal. So if every nonzero noetherian module has a prime
submodule, the set $\setS$ is a noetherian point annihilator set.

If $R$ satisfies ACC on ideals, then \emph{every} nonzero
right $R$-module has a prime submodule---see~\cite[(3.58)]{Lectures}.
So in this case $\setS$ is a point annihilator set, hence a noetherian point
annihilator set.
\end{proof}

We conclude from Corollary~\ref{Michler-prime NPA} and
Theorem~\ref{Cohen's Theorem} that \emph{a ring is right noetherian iff
it satisfies ACC on ideals and all of its Michler-prime right ideals are
finitely generated}. This is actually a slight generalization
of~\cite[Lem.~3]{Michler}, which Michler used as a ``stepping stone'' to
prove his main result.

Nevertheless, there do exist nonzero noetherian modules over some (large) rings
which do not have any prime submodules. Thus Michler's primes do not form a
noetherian point annihilator set in every ring. We include an example below.

\begin{example}
Let $k$ be a division ring, and let $R$ be the ring of
$\mathbb{N}\times\mathbb{N}$ row-finite upper triangular matrices over
$k$. Let $M_R=\bigoplus_{\mathbb{N}}k$, viewed as row vectors over $k$,
with the obvious right $R$-action. Let $M_i$ denote the submodule of $M$
consisting of row vectors whose first $i$ entries are zero. Then one can
show that
\[
M = M_0 \supsetneq M_1 \supsetneq M_2 \supsetneq \cdots
\]
are the only nonzero submodules of $M$. This visibly shows that $M$ is
noetherian. (Indeed, one can say more: every submodule of $M$ is actually
principal, generated by one of the ``standard basis vectors.'' We omit the
details because we will not use this fact.) However, one can see that
$\ann(M_i)$ is equal to the set of all matrices in $R$ whose first $i$ rows
are arbitrary and whose other rows are zero. So the fact that
\[
\ann(M_0) \subsetneq \ann(M_1) \subsetneq \ann(M_2) \subsetneq \cdots
\]
makes it clear that $M$ has no prime submodules.

Incidentally, $M_R$ is also an example of a cyclic 1-critical module
that is not a prime module. Thus, choosing a right ideal $I_R\subseteq R$ such
that $R/I\cong M$ (such as the right ideal of matrices in $R$ whose first row is
zero), we see that $I$ is cocritical but not Michler-prime. 
\end{example}

In spite of this complication, \emph{it is in fact possible to derive Michler's Theorem
from Theorem~\ref{Cohen's Theorem}}. The key observation that makes this possible
is a lemma~\cite[Lem.~2]{Smith1}  due to P.\,F.~Smith. This result states that if
every ideal of a ring $R$ contains a finite product of prime ideals each containing
that ideal, and if $R$ satisfies the ACC on prime ideals, then every nonzero right
$R$-module has a prime submodule.

\begin{theorem}[Michler]
\label{Michler's Theorem}
A ring $R$ is right noetherian iff all of the Michler-prime right ideals of $R$
are finitely generated.
\end{theorem}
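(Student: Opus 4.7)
The forward direction is immediate, so the focus is on the converse: assume every Michler-prime right ideal of $R$ is finitely generated. The plan is to deduce via Smith's lemma that every nonzero right $R$-module has a prime submodule, so that by Corollary~\ref{Michler-prime NPA} the set of Michler-prime right ideals is a noetherian point annihilator set for $R$; the noncommutative Cohen Theorem~\ref{Cohen's Theorem} then yields that $R$ is right noetherian. Thus I must verify the two hypotheses of Smith's lemma: (i) $R$ satisfies ACC on (two-sided) prime ideals, and (ii) every two-sided ideal of $R$ contains a finite product of prime ideals each containing it.

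First I note that every classical prime ideal $P$ is Michler-prime as a right ideal: by Proposition~\ref{Michler-prime characterization} this reduces to the standard observation that $R/P$ is a prime ring iff $(R/P)_R$ is a prime module. Hence by hypothesis, every classical prime of $R$ is finitely generated as a right ideal. For~(i), given a chain $P_1 \subsetneq P_2 \subsetneq \cdots$ of primes, the union $P = \bigcup P_i$ is itself prime: given $x \in R \setminus P$ and $r$ with $xr \in P$, pick $i$ with $xr \in P_i$ and invoke that in the prime module $R/P_i$ the nonzero element $x+P_i$ has annihilator $P_i$, yielding $r \in P_i \subseteq P$; this shows $\ann_R(x + P) = P$ for all $x \notin P$, whence $R/P$ is a prime module and $P$ is prime. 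Then $P$ is Michler-prime, so finitely generated as a right ideal, so $P = P_n$ for some $n$, and the chain stabilizes.

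For~(ii), I employ an Akizuki-style Zorn argument. Let $\mathcal{X}$ be the set of two-sided ideals of $R$ not containing a finite product of primes each containing them. The crucial observation is that a finite product $P_1 \cdots P_n$ of primes is itself finitely generated as a right ideal: if $P_1 = s_1 R + \cdots + s_k R$, then $P_1 P_2 = s_1 P_2 + \cdots + s_k P_2$ (using that $P_2$ is two-sided, so $R P_2 = P_2$), and iterating produces a finite generating set in terms of the right-ideal generators of the $P_i$. Thus if a chain $\{I_\alpha\} \subseteq \mathcal{X}$ had union $I \notin \mathcal{X}$, the finitely many generators of any product $P_1 \cdots P_n \subseteq I$ witnessing this membership would lie in some $I_\alpha$, yielding $P_1 \cdots P_n \subseteq I_\alpha$ with each $P_j \supseteq I \supseteq I_\alpha$ and contradicting $I_\alpha \in \mathcal{X}$. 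Zorn then supplies a maximal element $M \in \mathcal{X}$, which cannot itself be prime; hence there are ideals $A, B \supsetneq M$ with $AB \subseteq M$, and by maximality each of $A, B$ contains a product of primes containing it, whose concatenation is a product of primes each containing $M$ sitting inside $AB \subseteq M$, a contradiction. The main obstacle is precisely this step, since we cannot invoke right noetherianness (the very conclusion we seek) to run the standard Akizuki argument; instead we must leverage the fact that finite generation of individual primes as right ideals propagates to their finite products, which is what lets the union of a chain in $\mathcal{X}$ remain in $\mathcal{X}$.
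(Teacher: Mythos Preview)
Your overall architecture is exactly the paper's: verify the two hypotheses of Smith's lemma, invoke Corollary~\ref{Michler-prime NPA}, and finish with Theorem~\ref{Cohen's Theorem}. The paper simply cites Michler's original Lemmas~4 and~5 for conditions~(i) and~(ii), whereas you attempt to prove them directly. Your argument for~(ii) is correct and is a nice observation: since each prime is finitely generated as a right ideal and is two-sided, any finite product $P_1\cdots P_n$ is again finitely generated on the right, which is what makes the Zorn argument go through.

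Your argument for~(i), however, contains a genuine error. You claim that in the prime module $R/P_i$ the nonzero element $x+P_i$ has annihilator equal to $P_i$. This is false: a prime module has the property that every nonzero \emph{submodule} has the same annihilator as the whole module, but individual elements can have strictly larger (right-ideal) annihilators. For instance, take $R=M_2(k)$ with $P_i=0$; then $R$ is a prime module over itself, but $\ann_R(e_{11})=e_{21}R\neq 0$. Thus the deduction ``$xr\in P_i$ and $x\notin P_i$ imply $r\in P_i$'' fails, and your proof that the union $P$ is prime breaks down. (In fact, what you are attempting to show is the much stronger statement that $R/P$ is a domain.)

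Fortunately, (i) can be recovered from your own~(ii). Given a chain $P_1\subsetneq P_2\subsetneq\cdots$ with union $P$, apply~(ii) to the ideal $P$ to obtain primes $Q_1,\dots,Q_n\supseteq P$ with $Q_1\cdots Q_n\subseteq P$. By your observation this product is finitely generated as a right ideal, hence lies in some $P_m$. Since $P_m$ is prime, some $Q_j\subseteq P_m$; but $Q_j\supseteq P\supseteq P_m$, forcing $P=P_m$. So the chain stabilizes. With this repair your proof is complete and, modulo supplying the content of Michler's lemmas yourself, follows the same route as the paper.
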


\begin{proof}
(``If'' direction.) Suppose that the Michler-prime right ideals of $R$ are all
finitely generated. Every prime (two-sided) ideal of $R$ is Michler-prime, and
thus is finitely generated as a right ideal. By~\cite[Lemmas~4 \&~5]{Michler}
the following two conditions hold:
\begin{enumerate}
\item Every ideal $I\lhd R$ contains a product of finitely many prime ideals of
$R$, where each of these ideals contains $I$;
\item $R$ satisfies the ascending chain condition on prime ideals.
\end{enumerate}
It follows from~\cite[Lem.~2]{Smith1} that every nonzero right $R$-module
has a prime submodule. So by Corollary~\ref{Michler-prime NPA}, the set
of Michler-prime right ideals is a noetherian point annihilator set for $R$.
Now it follows from Theorem~\ref{Cohen's Theorem} that $R$ is a right noetherian
ring.
\end{proof}

In addition, our methods allow us to produce a generalization of the Kaplansky-Cohen
Theorem that is in the spirit of Michler's Theorem! Note that this was not proved
in~\cite{Michler}, and in fact seems to be a new result.

\begin{theorem}
\label{Michler Kap}
A ring $R$ is a principal right ideal ring iff all of the Michler-prime
right ideals of $R$ are principal.
\end{theorem}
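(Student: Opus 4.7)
The plan is to mimic the proof of Michler's Theorem~\ref{Michler's Theorem} in the preceding discussion, but replace the application of the noncommutative Cohen Theorem~\ref{Cohen's Theorem} with the noncommutative Kaplansky--Cohen Theorem~\ref{Kap}. The ``only if'' direction is immediate, so I focus on the ``if'' direction: assume all Michler-prime right ideals of $R$ are principal.

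First, I would observe that any principal right ideal is finitely generated, so in particular every two-sided prime ideal of $R$ is Michler-prime and hence finitely generated as a right ideal. This is exactly the hypothesis needed to invoke Lemmas~4 and~5 of~\cite{Michler}, yielding the two conditions (i) every ideal $I\lhd R$ contains a product of prime ideals each containing $I$, and (ii) $R$ satisfies ACC on prime ideals. Then Smith's Lemma~\cite[Lem.~2]{Smith1} ensures that every nonzero right $R$-module has a prime submodule, so by Corollary~\ref{Michler-prime NPA} the set $\setS$ of Michler-prime right ideals of $R$ is a noetherian point annihilator set.

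To apply Theorem~\ref{Kap} I still need $\setS$ to be closed under similarity. This is the one new ingredient, and it falls out of Proposition~\ref{Michler-prime characterization}: a right ideal $P$ is Michler-prime iff $R/P$ is a prime module, and being a prime module depends only on the isomorphism class of the module. Hence if $P\sim Q$ and $P\in\setS$, then $R/Q\cong R/P$ is prime, so $Q\in\setS$. With both the point annihilator and similarity-closure hypotheses verified, Theorem~\ref{Kap} applied to $\setS$ and the hypothesis that every $P\in\setS$ is principal yields that $R$ is a principal right ideal ring.

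The main obstacle is really only the bookkeeping: recognizing that the closure-under-similarity condition of Theorem~\ref{Kap}, which blocks a direct transplant of Michler's argument, is automatic for Michler-primes because the defining property ``$R/P$ is a prime module'' is module-theoretic. Once that observation is in hand, the argument is a direct parallel of the proof of Theorem~\ref{Michler's Theorem}, substituting ``principal'' for ``finitely generated'' and Theorem~\ref{Kap} for Theorem~\ref{Cohen's Theorem}.
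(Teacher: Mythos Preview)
Your proposal is correct and follows essentially the same approach as the paper: establish that the Michler-prime right ideals form a noetherian point annihilator set exactly as in the proof of Theorem~\ref{Michler's Theorem}, observe via Proposition~\ref{Michler-prime characterization} that this set is closed under similarity, and then invoke Theorem~\ref{Kap}. The paper's write-up is more compressed (it simply says ``as in the proof of Theorem~\ref{Michler's Theorem}''), but the underlying argument is identical to yours.
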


\begin{proof}
(``If'' direction.) Suppose that all of the Michler-prime right ideals of $R$
are principal. As in the proof of Theorem~\ref{Michler's Theorem} above, it
follows that the set $\setS$ of Michler-prime right ideals of $R$ is a
noetherian point annihilator set for $R$. This set is closed under similarity
thanks to Proposition~\ref{Michler-prime characterization}, so Theorem~\ref{Kap}
implies that $R$ is a principal right ideal ring.
\end{proof}

For a given ring $R$, the effectiveness of Michler's Theorem versus Theorem~\ref{Cohen's Theorem}
with $\setS$ taken to be the set of completely prime right ideals of $R$
depends on the scarcity or abundance of right ideals in $R$ from the ``test
set'' in either theorem. For example, over a simple ring $R$, every nonzero
right $R$-module is certainly prime. So every proper right ideal of $R$ will
be Michler-prime. (In fact, Koh~\cite[Thm.~4.2]{Koh1} has shown even more:
a ring $R$ is simple iff all of its proper right ideals are Michler-prime.)
Thus for a simple ring $R$, Michler's theorem provides no advantage,
as we would still need to test \emph{every} right ideal to see whether $R$ is
right noetherian. On the other hand, all right ideals of a ring $R$ are completely
prime only if $R$ is a division ring (see~\cite[Prop.~2.11]{Reyes}). So outside of this
trivial class of rings, we are guaranteed that Theorem~\ref{Cohen's Theorem} with
$\setS=\{\text{completely prime right ideals}\}$ reduces the set of right ideals which
we need to test in order to determine whether a ring is right noetherian. We can expect
Theorem~\ref{Cohen's Theorem} to be increasingly effective when we take $\setS$ to
be either of the two smaller test sets in~\eqref{inclusion of NPA sets}.

\separate

There is another variant of Cohen's Theorem for right fully bounded rings. (Recall that
$R$ is right fully bounded if, for every prime ideal $P\lhd R$, every essential right ideal
of $R/P$ contains a nonzero ideal of $R/P$). This result says that \emph{a
right fully bounded ring is right noetherian iff all of its prime ideals are finitely
generated as right ideals}. A statement of this theorem is given in~\cite[p.~95]{Krause},
and it is attributed to G.\,O.~Michler and L.\,W.~Small independently. P.\,F.~Smith
provided a proof using homological methods in~\cite[Cor.~5]{Smith2} and an elementary
proof in~\cite[Thm.~1]{Smith3}.
(On a related note,~\cite{Smith3} also features a version of Cohen's Theorem for
modules over commutative rings.)
If one is satisfied to deal with the subclass of PI rings, then this result can be
proved via the approach of the present paper.

\begin{theorem}[Michler-Small]
A PI ring $R$ is right noetherian iff all of its prime ideals are finitely generated
as right ideals.
\end{theorem}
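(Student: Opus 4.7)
The plan is to apply Theorem~\ref{Cohen's Theorem} with $\setS = \Spec(R)$ as the candidate right noetherian point annihilator set. The forward direction is immediate. For the converse, assume every prime ideal of the PI ring $R$ is finitely generated as a right ideal; since $\setS$ then consists of f.g.\ right ideals by hypothesis, it suffices to verify that $\Spec(R)$ is a right noetherian point annihilator set for $R$---that is, every nonzero noetherian right $R$-module has a point annihilator that is a (two-sided) prime ideal of $R$.

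First I would verify, using Michler-type arguments (cf.~\cite[Lem.~4, Lem.~5]{Michler}, which are not PI-specific), that $R$ satisfies ACC on prime ideals and that every ideal of $R$ contains a finite product of prime ideals each containing it. Given a nonzero noetherian $M_R$, I would then produce a nonzero submodule $N_0 \subseteq M$ whose two-sided annihilator $P := \ann_R(N_0)$ is a prime ideal of $R$---namely, by choosing $P$ maximal among $\{\ann_R(N) : 0 \neq N \subseteq M\} \cap \Spec(R)$, whose nonemptiness can be extracted from ACC on primes together with the fact (via Smith's Lemma~\cite[Lem.~2]{Smith1}) that $M$ admits a prime submodule. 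A standard maximality argument then yields primality of $P$: if $AB \subseteq P$ for ideals $A, B$ with $B \not\subseteq P$, then $N_0 B$ is a nonzero submodule of $M$ with $A \subseteq \ann_R(N_0 B)$, and the maximality of $P$ forces $A \subseteq P$.

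The hard part---and the main obstacle---will be promoting $P$ from a two-sided annihilator of a submodule to a genuine \emph{point} annihilator, i.e., finding a single $m \in N_0$ with $\ann_R(m) = P$. In a general ring one only has $\ann_R(m) \supseteq P$, and the maximal such $\ann_R(m)$ need only be completely prime as a right ideal (Proposition~\ref{completely prime NPA}); producing a two-sided equality $\ann_R(m) = P$ is exactly the place where the PI hypothesis does its real work. My strategy would be to use that PI rings are right \emph{fully bounded} (see~\cite[p.~95]{Krause}), so that in the prime factor $R/P$ every essential right ideal contains a nonzero two-sided ideal; combined with the fact that $R/P$ is a prime PI ring and hence right Goldie by Posner's theorem, and with the observation that $N_0$ is a faithful $R/P$-module, one can locate $m \in N_0$ whose annihilator in $R/P$ is zero, giving $\ann_R(m) = P$. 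Once this PI-specific step is in hand, $\Spec(R)$ is verified to be a right noetherian point annihilator set for $R$ consisting of finitely generated right ideals, and Theorem~\ref{Cohen's Theorem} immediately concludes that $R$ is right noetherian.
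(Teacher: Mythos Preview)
Your approach has a genuine gap at the step you yourself flag as the ``hard part.'' You claim that for a faithful noetherian module $N_0$ over the prime PI ring $R/P$, one can find $m \in N_0$ with $\ann_{R/P}(m) = 0$, appealing to full boundedness and Posner's theorem. But this is simply false: take $R = M_2(k)$, $P = 0$, and $N_0 = k^2$ the simple right module. Then $N_0$ is faithful and noetherian, yet every nonzero $m \in N_0$ has a nonzero right annihilator (a rank-one right ideal). So $\Spec(R) = \{0\}$ is \emph{not} a noetherian point annihilator set for $M_2(k)$, and your reduction ``it suffices to verify that $\Spec(R)$ is a right noetherian point annihilator set'' breaks down.

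The paper circumvents this obstruction by using condition~(4) of Theorem~\ref{Cohen's Theorem} rather than condition~(2): instead of trying to exhibit a prime ideal as a point annihilator, it shows that every nonzero noetherian $M_R$ has a nonzero cyclic \emph{finitely presented} submodule. After reducing to $M$ prime with $P = \Ann(M)$ (via the same Michler/Smith lemmas you cite), it invokes the Amitsur--Small theorem to conclude that the prime PI ring $R/P$ is itself noetherian, and then Cauchon's theorem to obtain an embedding $R/P \hookrightarrow M^n$. Projecting onto a suitable factor gives a nonzero cyclic submodule $\pi(R/P) \subseteq M$; since $R/P$ is finitely presented (as $P_R$ is f.g.) and noetherian, this quotient is finitely presented. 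The point is that $\pi(R/P)$ is generally a \emph{proper} quotient of $R/P$---its annihilator need not be $P$---so the paper never claims $P$ is a point annihilator of $M$, and indeed the $M_2(k)$ example shows one cannot.
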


\begin{proof}
(``If'' direction.) Let $R$ be a PI ring in which every prime ideal is finitely
generated as a right ideal.
By~Theorem~\ref{Cohen's Theorem}, to prove that $R$ is right noetherian it is enough
to show that every nonzero noetherian right $R$-module $M_R$ has a nonzero cyclic
finitely presented submodule. (This basically produces a noetherian point annihilator
set of right ideals that are finitely generated.)

As in the proof of Theorem~\ref{Michler's Theorem}, every nonzero noetherian
right $R$-module has a prime submodule, so it suffices to assume that $M$ is prime.
In this case, $P:=\Ann(M)$ is prime.
By a result of Amitsur and Small~\cite[Prop.~3]{AmitsurSmall}, because the prime
PI ring $R/P$ has a faithful noetherian module $M$, it is a noetherian ring.
A result of Cauchon on right fully bounded noetherian rings (see, for
instance,~\cite[Thm.~9.10]{GoodearlWarfield}) now implies that there is an embedding
$R/P \hookrightarrow M^n$ for some integer $n\geq 1$, and we will identify $R/P$
with its image as a submodule of $M$.
Let $\pi \colon M^n \to M$ be the projection of $M^n$ onto one of its
components such that $R/P \nsubseteq \ker\pi$.
The module $(R/P)_R$ is finitely presented (because $P_R$ is finitely
generated) and noetherian.
Thus $\pi(R/P)\subseteq M$ is a factor of the finitely presented module $R/P$
by a finitely generated submodule and must istelf be finitely presented
(for instance, see~\cite[(4.26)(b)]{Lectures}).
Hence $M$ has a nonzero cyclic finitely presented submodule as desired.
\end{proof}

It is clear that the same proof would recover the Michler-Small Theorem for
right fully bounded rings (not only PI rings) if the following question has an
affirmative answer.

\begin{question}
\label{Michler-Small question}
Let $R$ is a prime right (fully) bounded ring with a faithful (prime) noetherian
module $M_R$. Is $R$ right noetherian, or equivalently, is there an embedding
$R_R \hookrightarrow M^n$ for some integer $n \geq 1$?
\end{question}

(The equivalence of the two questions follows from Cauchon's result, mentioned in
the proof above, that a prime right fully bounded noetherian ring with a faithful
module $M_R$ embeds into a finite direct sum of copies of $M$.)

\separate

In a more recent paper~\cite{Zabavskii}, B.\,V.~Zabavs'ki\u{\i} also studied noncommutative
versions of the Cohen and Kaplansky-Cohen theorems.  Theorem~1 of that paper states
that, for a right chain ring $R$ (i.e., a ring whose right ideals are totally ordered
under inclusion), if every Michler-prime right ideal is principal,
then $R$ is a principal right ideal ring. This is clearly generalized by Theorem~\ref{Michler Kap} above.
There is second version of the Kaplansky-Cohen Theorem in~\cite[Thm.~2]{Zabavskii}
using a test set that is equal to the set of Koh-prime right ideals. Thus this
theorem is equivalent to Koh's theorem.
A noncommutative Cohen's Theorem is proved in~\cite[Thm.~5]{Zabavskii} using a test
set that contains the Michler-prime right ideals as  a subset; thus this result is
subsumed by Michler's Theorem.
Finally, there are also some results in~\cite{Zabavskii} investigating when every
two-sided ideal of a ring is either finitely generated or principal when considered as
a right ideal.


\section*{Acknowledgments}
I wish to thank T.\,Y.~Lam for his guidance throughout the time that I worked on the
topic at hand. He provided comments on a number of drafts of this work and helped to fix
some of the terminology introduced here.
I am grateful to George Bergman who read two drafts of this work and made many
useful comments. In particular, he provided comments that helped to clarify the content
of~\S\ref{point annihilator theorem section}, as well as an effective suggestion to help
repair an error in Example~\ref{local right noeth domain example}.
I also thank W.~Keith Nicholson for directing me to the reference~\cite{Osofsky} and for
suggesting Definition~\ref{closure under summands} in order to clarify certain arguments
in~\S\ref{essential right ideal section}.
Finally, I thank the referee for providing some helpful references.

\bibliographystyle{amsplain}
\bibliography{NoncommCohenKap.v2}
\end{document}